\definecolor{gr}{rgb}{0.7, 1, 0.7}
\definecolor{rr}{rgb}{1, 0.7, 0.7}
\theoremstyle{plain} 
\newtheorem{theorem}{Theorem}[section]
\newtheorem{lemma}[theorem]{Lemma}
\newtheorem{corollary}[theorem]{Corollary}
\newtheorem{proposition}[theorem]{Proposition}
\theoremstyle{definition} 
\newtheorem{definition}[theorem]{Definition}
\theoremstyle{remark} 
\newtheorem{remark}[theorem]{Remark}
\renewcommand{\mathfrak}{\mathbf}
\newcommand{\ignore}[1]{}
\newcommand{\bbC}{\mathbb{C}}
\newcommand{\bbP}{\mathbb{P}}
\newcommand{\bbN}{\mathbb{N}}
\newcommand{\bbZ}{\mathbb{Z}}
\newcommand{\bfx}{\mathbf{x}}
\newcommand{\bfe}{\mathbf{e}}
\newcommand{\bfc}{\mathbf{c}}
\newcommand{\bfz}{\mathbf{z}}
\newcommand{\bfw}{\mathbf{w}}
\newcommand{\bfv}{\mathbf{v}}
\newcommand{\cP}{\mathcal P}
\newcommand{\ddc}{{dd^c}}
\newcommand{\supp}{{\rm supp}}
\newcommand{\bif}{{\rm Bif}}
\newcommand{\crit}{{\rm Crit}}
\newcommand{\Pb}{\mathbb{P}}
\newcommand{\Cb}{\mathbb{C}}
\newcommand{\Zb}{\mathbb{Z}}
\newcommand{\Qb}{\mathbb{Q}}
\title[]{Independence of multipliers in several variables complex dynamics}
\author{Igors Gorbovickis}
\thanks{The research of the first author was supported by the German Research Foundation (DFG, project number 455038303).}
\address{Constructor University, Bremen, Campus Ring 1, 28759 Bremen, Germany}
\email{igorbovickis@constructor.university}
\author{Johan Taflin}
\thanks{The research of the second author was supported by the EIPHI Graduate School, ANR-17-EURE-0002.}
\address{Institut de Math\'ematiques de Bourgogne, UMR 5584 CNRS, Universit\'e de Bourgogne, F-21000 Dijon, France
}
\email{johan.taflin@u-bourgogne.fr}
\subjclass[2010]{}
\keywords{}
\date{\today}
\begin{document}
\begin{abstract}
We establish the independence of multipliers for polynomial endomorphisms of $\Cb^n$ and endomorphisms of $\Pb^n.$ This allows us to extend results about the bifurcation measure and the critical height obtained in \cite{gauthier2023sparsity} to the case of regular polynomial endomorphisms of $\Cb^n$ for $n\geq3.$

An important step in the proof is the irreducibility of the spaces of endomorphisms with $N$ marked periodic points, which is of independent interest.
\end{abstract}
\maketitle

\section{Introduction, statement of the main results}

The study of algebraic families of holomorphic dynamical systems on $\Pb^1$ has recently seen an explosion of remarkable and diverse results (see e.g. \cite{DKY1}, \cite{demarco-krieger-hexi-quadratic}, \cite{favre-gauthier-DAO}, \cite{arfeux-kiwi}, \cite{ji2023daocurves}, \cite{ji2024multiplier}).

In higher dimensions, the field is still emergent, although there already exist substantial results (see \cite{gv-northcott}, \cite{gauthier2023sparsity} or \cite{zhang2024arithmeticpropertiesfamiliesplane}). These results often rely on arithmetic techniques. A significant difficulty in this framework is the lack of knowledge about the moduli spaces associated with these algebraic dynamical systems. The results of this article contribute to filling these gaps in the case of the moduli space of endomorphisms of $\Pb^n$ or regular polynomial endomorphisms of $\Cb^n$. Roughly speaking, we show that the multipliers of periodic points give local coordinates on a Zariski open subset of these moduli spaces. This could be useful for understanding bifurcations in these spaces or their arithmetic properties. 
In particular, the results above allow us to extend the results of~\cite{gauthier2023sparsity} on the moduli space of regular polynomial endomorphisms of $\Cb^2$ to the general case of $\Cb^n$ with $n\geq3$ (see Corollary \ref{cor-mu-interior} and Corollary \ref{cor-unif}).

\subsection{Independence of the eigenvalues}

For a pair of integers $d\ge 2$ and $n\ge 1$, let $\mathcal P_d^n$ be the space of all polynomials $F\colon\bbC^n\to\bbC^n$ of degree $d$ which are \emph{regular}, i.e., that extend holomorphically to the projective space $\Pb^n.$ As this last condition is algebraic, $\mathcal P_d^n$ is naturally an affine variety.

Let $F\in\mathcal P_d^n$ be a polynomial and let $\bfw\in\bbC^n$ be one of its periodic points of period $p$. Let $\lambda\in\bbC$ be an eigenvalue of $\bfw$ (i.e., an eigenvalue of the Jacobian matrix $DF^p$ at $\bfw$). If $\lambda\neq 1$, and the Jacobian matrix $D_{\bfw} F^p$ does not have other eigenvalues equal to $\lambda$ or to $1$, then it follows from the Implicit Function Theorem that the periodic point $\bfw$ and its eigenvalue $\lambda$ can be followed locally and analytically in $\mathcal P_d^n$. This gives a \textit{local eigenvalue function} of period $p$. Furthermore, analytic continuation of the eigenvalue $\lambda$ (as well as the periodic point $\bfw$) is well-defined over the whole space $\mathcal P_d^n$ and results in a (multiple valued) algebraic function. We will call it an \textit{eigenvalue function} of period $p$.

We would have liked to consider the moduli space $\mathcal{P}_d^n / \mathrm{Aff}(\mathbb{C}^n)$, where the affine group acts by conjugation, but there is a slight technical difficulty to consider it as an algebraic variety. Since the group $\mathrm{Aff}(\mathbb{C}^n)$ is not reductive, the standard results on GIT quotients do not apply. Nevertheless, in the case of endomorphisms of projective space $\mathbb{P}^n$, the group $\mathrm{PGL}_{n+1}(\mathbb{C})$ is reductive, and Petsche-Szpiro-Tepper showed in \cite{szpiro} that its action on the space $\mathrm{End}_d(\mathbb{P}^n)$ of degree $d$ endomorphisms of $\mathbb{P}^n$ has finite stabilizers. Hence, the quotient by conjugation $\mathcal{M}_d^n := \mathrm{End}_d(\mathbb{P}^n) / \mathrm{PGL}_{n+1}(\mathbb{C})$ is geometric and is an affine variety \cite[Proposition 10]{szpiro}. Therefore, in what follows, we consider the image $\tilde{\mathcal{P}}_d^n$ of $\mathcal{P}_d^n$ in $\mathcal{M}_d^n$ that we will abusively refer to as the \emph{moduli space of degree $d$ regular endomorphisms of $\Cb^n$}. There is a natural projection from $\mathcal{P}_d^n / \mathrm{Aff}(\mathbb{C}^n)$, viewed as a set, onto $\tilde{\mathcal{P}}_d^n$, which is generically injective. However, maps with totally invariant hyperplanes could be identified in $\tilde{\mathcal{P}}_d^n$, even though they are not necessarily affinely conjugated.

Since the eigenvalues of the Jacobian matrix remain unchanged under holomorphic conjugacies, it follows that the eigenvalue functions project to (multiple valued) algebraic functions on $\tilde{\mathcal P}_d^n$. The number of coefficients in a polynomial map $F\colon\bbC^n\to\bbC^n$ of degree $d$ is $n {{d+n}\choose n}$; the dimension of the moduli space $\tilde{\mathcal P}_d^n$ is 
$$
nN_{d,n} := n \left[{{d+n}\choose n}-n-1\right].
$$

Our first main result is the following: 

\begin{theorem}\label{main_theorem_1}
	Let $d$ and $n$ be any pair of integers, such that $d\ge 2$ and $n\ge 1$. Then any $nN_{d,n}$ distinct eigenvalue functions defined on $\mathcal P_d^n$ and corresponding to distinct cycles of periods not smaller than $4$, are algebraically independent over $\bbC$. 
\end{theorem}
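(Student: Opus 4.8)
The plan is to establish algebraic independence by a dimension/rank count: it suffices to exhibit a single point $F_0 \in \mathcal P_d^n$ at which the given $nN_{d,n}$ eigenvalue functions have linearly independent differentials. Indeed, if $\Lambda = (\lambda_1,\dots,\lambda_m)$ with $m = nN_{d,n}$ denotes the tuple of (locally chosen branches of the) eigenvalue functions, then $\Lambda \colon U \to \bbC^m$ is a holomorphic map from a Zariski-open subset $U$ of $\mathcal P_d^n$ (or rather of a suitable finite cover resolving the branches), and $d\Lambda_{F_0}$ being of rank $m$ forces the image of $\Lambda$ to contain an open set, hence the $\lambda_i$ cannot satisfy a nontrivial polynomial relation. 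So the whole problem reduces to a local, infinitesimal computation at one carefully chosen map.

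For the choice of $F_0$, the natural candidate is a map with many disjoint attracting or repelling cycles of large period that are as "independent" as possible — for instance, a small perturbation of a product map $F_0(z_1,\dots,z_n) = (f(z_1),\dots,f(z_n))$ where $f$ is a one-variable polynomial of degree $d$, so that the periodic points of $F_0$ are $n$-tuples of periodic points of $f$ and the eigenvalues at such a point are products/tuples of multipliers of $f$. The differential of an eigenvalue function at a periodic point $\bfw$ of period $p$ can be written, via the formula for the derivative of an eigenvalue of $D_{\bfw}F^p$ with respect to parameters, as an explicit linear functional on the tangent space $T_{F_0}\mathcal P_d^n$ (the space of degree-$d$ perturbations), involving the left and right eigenvectors of $D_{\bfw}F^p$ and the variation of $F^p$ along the cycle. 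The task is then to show that for suitably many cycles of period $\ge 4$ these $m$ functionals span the $m$-dimensional cotangent space; the period bound $\ge 4$ should enter to guarantee enough distinct cycles exist and to avoid degenerate low-period coincidences (e.g. fixed points forced onto the hyperplane at infinity, or multiplier relations among period-$1,2,3$ orbits).

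The key structural input available to us is the irreducibility of the spaces of endomorphisms with $N$ marked periodic points mentioned in the abstract: this lets us pass from "the differentials are independent at one point" to "they are independent generically" without worrying about which irreducible component one lands in, and it legitimizes treating the marked-point incidence variety as an irreducible parameter space on which the eigenvalue functions are genuine algebraic functions. Concretely, I would: (1) set up the incidence variety $\mathcal Z$ of pairs (map, $N$-tuple of periodic points of the relevant periods) and invoke its irreducibility; (2) observe that on $\mathcal Z$ the eigenvalue functions become single-valued regular functions on a Zariski-open set, so algebraic independence of them is equivalent to the generic rank of their Jacobian being maximal; (3) reduce, by irreducibility, to checking maximal rank at one convenient point; (4) perform the eigenvector/multiplier-derivative computation at a product (or nearly-product) map to verify the rank is exactly $m = nN_{d,n}$.

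The main obstacle I anticipate is step (4): producing an explicit $F_0$ and an explicit set of cycles for which the spanning of the cotangent space can actually be checked. One has to control the interaction between the $n$ "coordinate directions" and the $\binom{d+n}{n}$-dimensional space of monomial perturbations, and ensure the chosen cycles see all of it — in particular, a pure product map is too degenerate (its perturbations decouple and cannot detect off-diagonal monomials), so one needs either a clever perturbation of it or an altogether different normal form, and one must simultaneously keep the cycle count and period bounds compatible with "periods $\ge 4$" and with the available count of eigenvalue functions. Establishing this genericity of the rank — equivalently, that the "bad locus" where the Jacobian drops rank is a proper subvariety — is where the real work lies; the reduction steps (1)–(3) are formal once irreducibility is in hand.
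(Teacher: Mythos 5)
Your high-level plan --- reduce algebraic independence to a generic-rank statement via irreducibility of a marked-orbit incidence variety, then verify maximal rank of the Jacobian of the eigenvalue functions at one carefully chosen base map --- is exactly the architecture the paper follows. However, the step you explicitly defer, step (4), is where essentially the whole theorem lives, and two points in your sketch need correcting before that step can even be set up properly.

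First, irreducibility of the variety of maps with $N$ marked periodic \emph{points} (the space $X^n_{d,\mathbf p}$) is not enough. An eigenvalue function requires choosing one of the $n$ eigenvalues at each marked periodic point; the variety on which these become single-valued algebraic functions is the one with marked \emph{eigendirections} ($Z^n_{d,\mathbf p}$). One therefore needs the stronger irreducibility statement for $Z^n_{d,\mathbf p}$, which does not follow formally from the one for $X^n_{d,\mathbf p}$ and requires a separate monodromy argument to permute eigendirections of a Jordan-type degeneration while fixing everything else. Your formal reduction (1)--(3) silently uses only the weaker statement.

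Second, your diagnosis that ``a pure product map is too degenerate'' and must be replaced or perturbed before one can compute is a misreading of where the obstruction is. The paper takes the base point to be precisely the power map $F_0(\bfz)=(z_1^d,\ldots,z_n^d)$ and performs the entire differential computation there. The genuine issue --- the eigenvalues of $D_{\bfw}F_0^p$ are not simple, so eigenvalue functions are not defined at $F_0$ --- is resolved not by moving the base point but by introducing surrogate functions $\rho_{k,\bfw_0}$, the $k$-th diagonal entry of $D_{\bfw(G)}G^p$, which \emph{are} analytic near $F_0$. One then proves (a) an explicit closed form for $\partial_{m,I}\rho_{k,\bfw_0}(F_0)$, which vanishes unless $m=k$ and otherwise is a polynomial in the coordinates of $\bfw_0$; (b) that on the one-parameter diagonal family $\mathcal A_d^n$ near $F_0$ the $\rho_k$ agree with the true eigenvalue functions together with all their first partials; and (c) that by choosing $N_{d,n}$ cycles per coordinate direction $k$ one can make each of the $n$ diagonal $N_{d,n}\times N_{d,n}$ blocks of the Jacobian nondegenerate. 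Point (c) hinges on a counting argument --- one needs a non-identically-zero polynomial of controlled multidegree to be nonzero at a periodic point that has not been used yet --- and this is exactly why $p\ge 4$ appears: it is the bound that makes the inequality $pnN_{d,n} < (d^{p-1}-d^{[p/2]})(d^{p-1}-1)^{n-1}$ hold. The ``decoupling'' you flagged as a defect of the product map (only $m=k$ perturbations affect $\rho_k$ to first order) is in fact what makes the Jacobian block-diagonal and the rank check tractable. Finally, one passes from $F_0$ to a nearby diagonal map $F\in\tilde{\mathcal A}_d^n(p_{\max})$ by continuity of the partial derivatives, recovering independence of the genuine eigenvalue functions at $F$. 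None of this is visible in your step (4) as written, and the fix you propose (perturbing the normal form before differentiating) would make the computation considerably harder rather than easier.
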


The conclusion of the theorem means that the considered eigenvalue functions do not satisfy any non-trivial polynomial relation with complex coefficients (neither locally in an open subset of $\mathcal P_d^n$, nor globally). In other words, if $p\geq4$ then there exists a dense Zariski open subset $U_p$ of  $\tilde{\mathcal P}_d^n$ such that for any class $[f]\in U_p$, any $nN_{d,n}$ local eigenvalue functions corresponding to different periodic orbits of $f$ of period between $4$ and $p$, define local coordinates near $[f].$

We note that the statement of Theorem~\ref{main_theorem_1} is sharp in terms of the number of independent eigenvalue functions: any collection of $nN_{d,n}+1$ eigenvalue functions on $\mathcal P_d^n$ is algebraically dependent over $\bbC$, since in this case the number of the eigenvalue functions exceeds the dimension of the moduli space $\tilde{\mathcal P}_d^n$.

On the other hand, the condition on the periods of the cycles in Theorem~\ref{main_theorem_1} is far from being optimal. Using the same methods as in this paper, it is not very difficult to strengthen the result of Theorem~\ref{main_theorem_1}, allowing at least some periods to be smaller than $4$. In particular, for $n=1$ it was shown by the first author in~\cite{gorbovickis-poly} that the result of Theorem~\ref{main_theorem_1} holds without any assumptions on the periods of the cycles. However, in Theorem~\ref{main_theorem_1} we decided to sacrifice the generality of the result in favor of a more concise and transparent proof.

Observe that the result also holds in the case of the space $\mathrm{End}_d(\bbP^n)$ of degree $d$ endomorphisms of $\bbP^n.$ As already mentioned, by \cite{szpiro} the associate moduli space $\mathcal M_d^n=\mathrm{End}_d(\bbP^n)/\mathrm{PGL}_{n+1}(\bbC)$ is an affine variety of dimension $(n+1)N_{d,n}.$
\begin{theorem}\label{main_theorem_Pn}
	Let $d$ and $n$ be any pair of integers, such that $d\ge 2$ and $n\ge 1$. Define
	$$
	\tilde p:= \begin{cases}
		5 & \text{if } d=2\text{ and } n=2\\
		4 & \text{otherwise}.
	\end{cases}
	$$
	Then any $(n+1)N_{d,n}$ distinct eigenvalue functions defined on $\mathrm{End}_d(\bbP^n)$ and corresponding to distinct cycles of periods not smaller than $\tilde p$, are algebraically independent over $\bbC$.
	\end{theorem}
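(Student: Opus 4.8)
The plan is to follow the scheme behind Theorem~\ref{main_theorem_1}, with $\mathrm{End}_d(\bbP^n)$ in place of $\mathcal{P}_d^n$, taking into account that $\dim\mathcal{M}_d^n=(n+1)N_{d,n}$ exceeds $\dim\tilde{\mathcal{P}}_d^n=nN_{d,n}$ by $N_{d,n}$. Since eigenvalues are unchanged under holomorphic conjugacy, the eigenvalue functions descend to $\mathcal{M}_d^n$, which by~\cite{szpiro} is an irreducible affine variety with $\operatorname{trdeg}_{\bbC}\bbC(\mathcal{M}_d^n)=(n+1)N_{d,n}$. Hence, given $M:=(n+1)N_{d,n}$ distinct eigenvalue functions $\lambda_1,\dots,\lambda_M$ attached to distinct cycles $c_1,\dots,c_M$ of periods $\ge\tilde p$, it suffices — by lower semicontinuity of the rank — to exhibit a single $f_0$ in a Zariski-dense open subset of $\mathrm{End}_d(\bbP^n)$ over which every $\lambda_i$ admits a holomorphic branch, and at which the cotangent vectors $d\lambda_1,\dots,d\lambda_M$ are linearly independent; equivalently, they span the cotangent space of $\mathcal{M}_d^n$ at the class of $f_0$ (every eigenvalue function is constant along $\mathrm{PGL}_{n+1}(\bbC)$-orbits). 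This forces the associated morphism into $\bbC^M$ to be dominant, so that no polynomial relation among the $\lambda_i$ can hold.

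The first real step is to remove the dependence of the argument on the particular cycles $c_1,\dots,c_M$. Fix $N\ge\tilde p$ so that all of them have period at most $N$. Using the irreducibility of the spaces of endomorphisms of $\bbP^n$ with marked periodic points (established earlier in the paper), together with the fact that for a generic $f$ the characteristic polynomial of $DF^p$ along a period-$p$ cycle is irreducible over the function field, one obtains that the fibered configuration space $\mathcal{E}$ — parametrizing $(f,(c_1,k_1),\dots,(c_M,k_M))$ with the $c_j$ pairwise distinct cycles of $f$ of periods in $[\tilde p,N]$ and $k_j$ an eigenvalue index — is irreducible on each locus $\mathcal{E}_{(p_1,\dots,p_M)}$ where the multiset of periods of the marked cycles is fixed. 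Any admissible collection $\lambda_1,\dots,\lambda_M$ is then a local section of such a locus, and the polynomial relations it satisfies are exactly the equations of the Zariski closure of the image of the eigenvalue morphism $\mathcal{E}_{(p_1,\dots,p_M)}\to\bbC^M$, which does not depend on the section. Therefore the algebraic (in)dependence of a collection depends only on its multiset of periods, and it is enough to verify the rank-$M$ condition of the first paragraph for \emph{one convenient} collection of distinct cycles realizing each admissible multiset $(p_1,\dots,p_M)$ with $p_j\ge\tilde p$; it is this freedom that makes the computation feasible.

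For that computation I would take $f_0$ to be a carefully chosen model: concretely, the extension to $\bbP^n$ of a regular polynomial endomorphism of $\Cb^n$ of degree $d$ — for instance a suitable perturbation of the power map $[z_0^d:z_1^d:\dots:z_n^d]$, or a triangular skew product — for which the periodic points of each period and the eigenvalues of $DF^p$ along them are given by explicit formulas in roots of unity and their products, with generically irreducible characteristic polynomials. Such an $f_0$ preserves the hyperplane at infinity $H_\infty\cong\bbP^{n-1}$, on which it restricts to an explicit degree-$d$ endomorphism, and its cycles split into those lying in $\Cb^n$ and those lying in $H_\infty$. I would organize the rank computation along the decomposition $T^*_{[f_0]}\mathcal{M}_d^n=T^*_{[f_0]}\tilde{\mathcal{P}}_d^n\oplus\mathcal{N}^*$, where $\dim\mathcal{N}^*=N_{d,n}$ and $\mathcal{N}^*$ is dual to the normal deformations moving $H_\infty$: for the $T^*_{[f_0]}\tilde{\mathcal{P}}_d^n$-part one invokes the model and the mechanism already behind Theorem~\ref{main_theorem_1} (and, for $n=1$, the one-dimensional result of~\cite{gorbovickis-poly}), so that $nN_{d,n}$ of the chosen cycles, taken inside $\Cb^n$, have eigenvalue-differentials spanning $T^*_{[f_0]}\tilde{\mathcal{P}}_d^n$; for the remaining $N_{d,n}$ directions one uses additional cycles — either cycles of $f_0$ lying in $H_\infty$, where along each cycle $DF^p$ has $n-1$ "tangential" eigenvalues (those of $(F|_{H_\infty})^p$) and one "transverse" eigenvalue recording the normal derivative, or the way the multipliers of $\Cb^n$-cycles vary in the normal directions, or a combination — and one shows that the projections to $\mathcal{N}^*$ of their eigenvalue-differentials are linearly independent. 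Thanks to the explicit model, this last assertion reduces to the non-vanishing of a Vandermonde-type determinant in the multipliers of the cycles involved, uniformly in the periods $\ge\tilde p$; and the case $d=2$, $n=2$ is exceptional precisely here, because then $F|_{H_\infty}$ is a degree-$2$ endomorphism of $\bbP^1$ with too few cycles of period $4$ for this determinant to be non-zero, which forces the threshold up to $5$.

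The main obstacle is the last part of the preceding paragraph: controlling how the multipliers of cycles at infinity (both tangential and transverse), and of finite cycles, respond to perturbations of $H_\infty$, and proving that the resulting $N_{d,n}$ differentials are genuinely independent of the $nN_{d,n}$ ones coming from $\Cb^n$-cycles. This is a linear-algebra statement about an explicitly presented matrix of multiplier derivatives that must hold for every period $\ge\tilde p$, and whose non-degeneracy fails for small $(d,n)$ — above all for $(d,n)=(2,2)$, where the scarcity of low-period cycles of the model is exactly what raises the period threshold; choosing the right model and carrying out this non-vanishing computation is the delicate point. The irreducibility inputs behind the reduction of the second paragraph are, by contrast, comparatively routine given the marked-periodic-points theorem proved earlier in the paper.
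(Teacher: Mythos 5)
Your high-level scheme coincides with the paper's: reduce, via the irreducibility/monodromy results for marked cycles and eigendirections, to exhibiting one configuration of cycles per multiset of periods at which the differentials of the eigenvalue functions have full rank, and carry out that rank computation near the power map $\tilde F_0([z_0:\cdots:z_n])=[z_0^d:\cdots:z_n^d]$, splitting the $(n+1)N_{d,n}$ cotangent directions into the $nN_{d,n}$ polynomial ones and $N_{d,n}$ extra ones. But the decisive step is exactly the one you defer as ``the delicate point,'' and the menu of options you offer for it is only partly on target. The paper does \emph{not} use cycles at infinity at all: every marked cycle is a finite cycle $[1:w_1:\cdots:w_n]$ of $\tilde F_0$ with all $w_j\neq 0$, and the $N_{d,n}$ extra cotangent directions are the perturbations $P_{0,\tilde I}$ of the $z_0^d$-coordinate, which in the affine chart read $F_{0,t}(\bfz)=(z_1^d-tz_1^d\bfz^I,\ldots)+o(t)$; Lemma~\ref{compute_Pn_lemma} then gives the closed formula $\partial_{0,\tilde I}\rho_{k,\tilde\bfw_0}(\tilde F_0)=-i_kd^{p-1}\sum_{i=0}^{p-1}(\bfw_0^I)^{d^i}$, and Lemma~\ref{degree_lemma2} shows that the resulting polynomials in $\bfw_0$ for distinct pairs $(m,\tilde I)$ share no proportional monomials, so the same inductive cofactor-expansion and point-counting argument as in the polynomial case applies. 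Your option ``the way the multipliers of $\Cb^n$-cycles vary in the normal directions'' is the correct one, but without the explicit derivative formula and the non-proportionality of monomials, the claimed linear independence of the extra $N_{d,n}$ differentials is unsupported; your alternative route through tangential/transverse eigenvalues of cycles in $H_\infty$ is not developed in the paper and would need its own (nontrivial) analysis.

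Your diagnosis of the exceptional case $(d,n)=(2,2)$ is also not the actual mechanism. It has nothing to do with $F|_{H_\infty}$ being a degree-$2$ map of $\bbP^1$ with few period-$4$ cycles. The issue is that for $d=2$, $m=0$, $i_k=1$ the polynomial $Q_{0,k,\tilde I}$ has degree $d^p-d^{p-2}$ in $z_k$ rather than $d^p-d^{p-1}$, which degrades the guaranteed count of finite periodic points where a nonzero such polynomial survives from $(d^{p-1}-1)$ to $(d^{p-2}-1)$ in one factor; the required inequality $pnN_{d,n}<(d^{p-1}-d^{[p/2]})(d^{p-1}-1)^{n-2}(d^{p-2}-1)$ then fails precisely for $d=2$, $n=2$, $p=4$ (it reads $24<12$) and holds for $p\ge 5$. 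So while your instinct that ``scarcity of cycles relative to a determinant'' drives the threshold is in the right spirit, the determinant, the cycles being counted, and hence the reason for $\tilde p=5$ are all different from what you describe. In short: correct architecture, but the key lemmas (the $m=0$ derivative formula, the monomial-comparison statement, and the counting inequality) are missing, and the sketch of how to supply them points partly in the wrong direction.
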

	
\subsection{Previous results and consequences}
When $n=1,$ the stronger versions of these results have been proven by the first author in \cite{gorbovickis-poly,gorbovickis-rat}. They are related to the seminal work of McMullen \cite{mcmullen-root} which in particular establishes that the set of all eigenvalues of periodic points of $f\colon\bbP^1\to\bbP^1$ determines the conjugacy class of $f$ up to finitely many choices in $\mathcal M_d^1$ except when $f$ is a flexible Latt\`es map. We also refer to \cite{JX-1,ji2024multiplier} for two striking new developments when $n=1$ by Ji-Xie. In higher dimension, Gauthier-Taflin-Vigny proved in \cite{gauthier2023sparsity} that all the eigenvalues of (almost) all periodic cycles determine $f\in\mathcal M_d^n$ up to finitely many choices, except when $f$ belongs to a proper algebraic subset of $\mathcal M_d^n.$ Observe that this subset, which contains several analogs of flexible Latt\`es maps, is widely unknown. It is related to the generalization in higher dimension of the characterization of algebraic stable (i.e. without bifurcations) families of rational maps on $\Pb^1$ by McMullen \cite{mcmullen-root} and of the non-ampleness locus of the critical height by Ingram \cite{ingram-height}.

The independence result of \cite{gauthier2023sparsity} also holds in the polynomial case but only in dimension $2,$ i.e., in $\tilde{\mathcal P}_d^2.$ This independence is actually a key ingredient in the proof of \cite[Theorem C]{gauthier2023sparsity} which was one of the main steps to obtain the non-Zariski density of postcritically finite (PCF) endomorphisms in $\mathcal P_d^2$ and in $\mathrm{End}_d(\bbP^n)$, as soon as $d\geq2$ and $n\geq2,$ as it has been conjectured by Ingram-Ramadas-Silverman in \cite{IRS}. Thanks to Theorem \ref{main_theorem_1}, we can now extend \cite[Theorem C]{gauthier2023sparsity} to $\tilde{\mathcal P}_d^n$ for $n\geq3,$ which was the original motivation of the present article.
\begin{corollary}\label{cor-mu-interior}
	Fix two integers $d\geq2$ and $n\geq3.$ There exists a non-empty open subset $\Omega\subset\tilde{\mathcal P}_d^n$ such that
	\begin{itemize}
		\item the open set $\Omega$ is contained in the support of the bifurcation measure $\mu_\bif,$
		\item the open set $\Omega$ contains no conjugacy class of PCF endomorphism.
	\end{itemize}
\end{corollary}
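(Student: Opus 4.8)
The plan is to run the argument of \cite[Theorem~C]{gauthier2023sparsity} --- which establishes the statement over $\tilde{\mathcal P}_d^2$ --- in dimension $n\ge3$. The only step of that proof that is special to $n=2$ is the algebraic independence of $nN_{d,n}$ eigenvalue functions on $\tilde{\mathcal P}_d^n$, and this is now available in every dimension by Theorem~\ref{main_theorem_1}. Concretely, set $N:=nN_{d,n}=\dim\tilde{\mathcal P}_d^n$, fix a large integer $p$, and choose $N$ eigenvalue functions $\lambda_1,\dots,\lambda_N$ attached to $N$ distinct cycles of periods in $\{4,\dots,p\}$; by Theorem~\ref{main_theorem_1} (in the coordinate form recorded right after its statement) there is a dense Zariski-open set $U\subseteq\tilde{\mathcal P}_d^n$ on which $\lambda=(\lambda_1,\dots,\lambda_N)$ is a local biholomorphism, so that the $\lambda_i$ give holomorphic coordinates near each $[f]\in U$.

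Following \cite{gauthier2023sparsity}, the core of the proof is to produce one parameter $[f_*]\in U$ together with a small polydisc neighbourhood $V\subseteq U$ of $[f_*]$ (in the $\lambda$-coordinates) with two features. First, \emph{robust escape}: there is a holomorphically varying point $q(f)$ on the forward orbit of a critical point of $f$ such that the dynamical Green function satisfies $G_f(q(f))\ge\delta>0$ for all $[f]\in V$; then that critical orbit escapes uniformly on $V$, so no $[f]\in V$ is postcritically finite. Second, \emph{maximal bifurcation}: $[f_*]\in\supp\mu_\bif$, and --- using that the $\lambda_i$ are genuine coordinates on $V$ to produce $N$ transverse directions of bifurcation --- one shows on a (possibly smaller) open set $\Omega\subseteq V$ the lower bound $T_\bif^{\wedge N}\ge c\,\omega^{N}>0$, where $T_\bif=\ddc L$ is the bifurcation current ($L$ the sum of the Lyapunov exponents of the equilibrium measure) and $\omega$ a fixed strictly positive smooth $(1,1)$-form on $V$. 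This gives $\Omega\subseteq\supp\mu_\bif$, and since $\Omega\subseteq V$ contains no PCF class, the corollary follows.

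The hard part is the construction of $[f_*]$: one needs a single parameter at which a piece of the critical set escapes robustly while $N$ further cycles can be independently moved through neutrality, producing a bifurcation of full rank $N$. This is exactly where $n\ge2$ is used essentially --- for $n\ge2$ the critical set is a hypersurface, so there is room to split off an escaping component while keeping enough of it to generate $N$ independent bifurcations, whereas in the one-variable theory the finitely many critical points cannot simultaneously do both. The case $n=2$ is handled in \cite{gauthier2023sparsity}; the single extra ingredient required to repeat it for $n\ge3$ --- the existence of local multiplier coordinates on a Zariski-open subset of the moduli space, equivalently the algebraic independence of $N=nN_{d,n}$ eigenvalue functions of cycles of periods $\ge4$ --- is precisely Theorem~\ref{main_theorem_1}, after which the argument of \cite{gauthier2023sparsity} goes through without change.
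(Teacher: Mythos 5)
Your high-level observation --- that Theorem~\ref{main_theorem_1} is exactly the ingredient that was missing in $\cite{gauthier2023sparsity}$ for $n\ge 3$, and that after supplying it the proof of $\cite[\text{Theorem~C}]{gauthier2023sparsity}$ runs --- is correct and matches the paper. However, the mechanism you describe as ``the core of the proof'' is not the one used in $\cite{gauthier2023sparsity}$ or in the paper's Section~\ref{sec_coro}, and this misdescription is a substantive gap.

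You describe a direct construction: a single parameter $[f_*]$ with a robustly escaping critical orbit (hence no PCF nearby) together with a quantitative lower bound $T_{\bif}^{\wedge N}\ge c\,\omega^N$ obtained because the multipliers are coordinates and therefore give $N$ ``transverse directions of bifurcation.'' No such quantitative estimate appears in the argument, and it is not clear how one would derive $T_{\bif}^{\wedge N}>0$ from the mere fact that the $\lambda_i$ form local coordinates (that fact by itself says nothing about where $\ddc L$ lives). What the paper actually does is a proof by contradiction built on a dynamically robust configuration rather than on escape: one constructs, via a \emph{blender}, a non-empty open $\Omega\subset\mathcal P_d^n$ in which every $f$ carries a robust heterodimensional cycle between a repelling hyperbolic set $\Lambda(f)$ and a saddle fixed point $p(f)$ (this is $\cite[\text{Theorem~4.1}]{gauthier2023sparsity}$, valid for all $n\ge 2$). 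If the projection $\tilde\Omega$ were not contained in $\supp\mu_{\bif}$, then $\cite[\text{Theorem~3.4}]{gauthier2023sparsity}$ produces, densely in $\tilde\Omega$, positive-dimensional submanifolds $M$ satisfying a rigidity property $(\star)$; the paper observes that the proof of that theorem actually yields a holomorphic conjugacy between nearby maps in $M$ on neighbourhoods of their small Julia sets, so that \emph{all eigenvalue functions} (not just the Jacobian determinants that $\cite[\text{Theorem~3.4}]{gauthier2023sparsity}$ nominally controls) are constant on $M$. That is then contradicted by Theorem~\ref{main_theorem_1}. Your proposal omits this contradiction structure entirely, omits the blender/heterodimensional-cycle mechanism which is what forces the rigidity, and omits the crucial upgrade from ``determinant of $D_xf^p$ constant'' to ``eigenvalues of $D_xf^p$ constant,'' which is precisely what makes Theorem~\ref{main_theorem_1} applicable. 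In short: right key ingredient, wrong account of how it is used.
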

We refer to \cite{gauthier2023sparsity} for the definitions of PCF endomorphisms and the construction of the bifurcation measure is described in Section~\ref{sec_coro}. Notice that the latter was introduced by Bassanelli-Berteloot in \cite{BB1}. But, to the best of our knowledge, the simple fact that $\mu_\bif$ doesn't vanish in $\tilde{\mathcal P}_d^n$ when $n\geq3,$ which is now a direct consequence of Corollary \ref{cor-mu-interior}, was not known. Another consequence of Corollary \ref{cor-mu-interior} is that the assumptions of \cite[Theorem 7.2]{gauthier2023sparsity} are also satisfied when $n\geq3$ which gives the following uniform result, much stronger than the non-Zariski density of PCF maps in $\mathcal P_d^n$ (see \cite[Theorem D]{gauthier2023sparsity} for $n=2$).
\begin{corollary}\label{cor-unif}
Let $n\geq3$ and $d\geq2.$ There exists a dense Zariski open subset $U$ of $\mathcal P_d^n$ and an integer $B\geq1$ such that for all $f\in U$, there exists an algebraic subset $W_f$ of $\Cb^n$ of codimension $2$ with $\deg(W_f)\leq B$ which contains all the critical preperiodic points of $f$ in $\Cb^n,$ i.e.
$$\mathrm{Preper}(f)\cap\crit_f\subset W_f.$$
\end{corollary}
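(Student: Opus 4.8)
\medskip
\noindent\textbf{Proof strategy for Corollary~\ref{cor-unif}.}
The plan is to deduce the statement directly from \cite[Theorem 7.2]{gauthier2023sparsity} after verifying its hypotheses. That result is conditional: it produces exactly the asserted objects — a dense Zariski open subset $U\subset\mathcal P_d^n$, an integer $B\geq1$, and for every $f\in U$ a codimension-$2$ algebraic subset $W_f\subset\Cb^n$ of degree at most $B$ with $\mathrm{Preper}(f)\cap\crit_f\subset W_f$ — under a non-degeneracy hypothesis on the critical height of $\mathcal P_d^n$. In \cite{gauthier2023sparsity} this hypothesis was established only for $n\leq2$ (whence \cite[Theorem D]{gauthier2023sparsity}), since its verification there relied on the independence of eigenvalue functions, available at that time only in dimension $2$.

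First I would recall the precise form of that hypothesis: it is guaranteed as soon as there exists a non-empty open subset of $\tilde{\mathcal P}_d^n$ contained in $\supp\mu_\bif$ and meeting no conjugacy class of PCF endomorphism. Such an open set makes the critical height a big (and nef) object, so that the arithmetic equidistribution and positivity input of \cite{gauthier2023sparsity} goes through uniformly and yields the bound $B$. But the existence of such an open set is exactly Corollary~\ref{cor-mu-interior}, valid for all $n\geq3$ and $d\geq2$; hence \cite[Theorem 7.2]{gauthier2023sparsity} applies in this range.

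The only remaining bookkeeping is that Corollary~\ref{cor-mu-interior} is phrased on the moduli space $\tilde{\mathcal P}_d^n$, whereas the conclusion of Corollary~\ref{cor-unif} is phrased on the parameter space $\mathcal P_d^n$. One pulls back the Zariski open set along the quotient map $\mathcal P_d^n\to\tilde{\mathcal P}_d^n$ and uses that the codimension of $W_f$ in $\Cb^n$ and its degree bound are unaffected by affine conjugacy, exactly as in the case $n=2$ of \cite{gauthier2023sparsity}. Since $\mathcal P_d^n$ is a non-empty Zariski open subset of an affine space it is irreducible, and so is $\tilde{\mathcal P}_d^n$; thus no subtlety about the open set meeting every irreducible component arises.

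In all honesty the genuine difficulty for Corollary~\ref{cor-unif} has been discharged upstream: the new ingredient is Corollary~\ref{cor-mu-interior}, itself obtained by rerunning the proof of \cite[Theorem C]{gauthier2023sparsity} with Theorem~\ref{main_theorem_1} of the present paper substituted for the dimension-$2$ independence statement, and the hard theorem is Theorem~\ref{main_theorem_1}. Consequently, the main obstacle for this corollary is not a new argument but confirming that the hypothesis of \cite[Theorem 7.2]{gauthier2023sparsity} is indeed met by the open set supplied by Corollary~\ref{cor-mu-interior}, with no hidden extra requirement on its location or size inside $\supp\mu_\bif$.
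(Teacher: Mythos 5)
There are two genuine gaps that your proposal does not address, and both are where the paper's proof actually does real work.

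First, the hypothesis of \cite[Theorem 7.2]{gauthier2023sparsity} is not phrased in terms of $\mu_\bif$ but in terms of the \emph{polynomial} bifurcation measure $\hat\mu_\bif^{\mathrm{pol}}=(\ddc\hat G_{\crit})^N$, and what must be checked is the strict positivity $\int_{\hat{\mathcal P}_d^n}\hat G_{\crit}\,\hat\mu_\bif^{\mathrm{pol}}>0$. Having an open subset of $\supp\mu_\bif$ with no PCF classes is \emph{not} automatically sufficient, because the Bedford--Jonsson formula only gives the inequality $\hat\mu_\bif\geq\hat\mu_\bif^{\mathrm{pol}}$, which implies $\supp\hat\mu_\bif^{\mathrm{pol}}\subset\supp\hat\mu_\bif$ and not the reverse. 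So the open set $\tilde\Omega\subset\supp\mu_\bif$ produced by Corollary~\ref{cor-mu-interior} need not a priori lie in $\supp\hat\mu_\bif^{\mathrm{pol}}$. The paper closes this gap by going back to the explicit blender map $f_0$ from \cite[Section 4.5]{gauthier2023sparsity}: one checks that after shrinking $\Omega$ the restriction of $f\in\Omega$ to the hyperplane at infinity is hyperbolic, so $L_\infty$ is pluriharmonic on $\Omega$, whence $\ddc L=\ddc G_{\crit}$ there and hence $\hat\mu_\bif=\hat\mu_\bif^{\mathrm{pol}}$ on $\hat\Omega$. Only then does $\tilde\Omega\subset\supp\mu_\bif$ transfer to $\hat\Omega\subset\supp\hat\mu_\bif^{\mathrm{pol}}$, and the positivity of the integral follows. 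Your proposal asserts the positivity follows ``as soon as'' one has an open set in $\supp\mu_\bif$ disjoint from PCF classes, which skips this necessary verification; there \emph{is} a hidden extra requirement, and it is exactly the pluriharmonicity of $L_\infty$ on $\Omega$.

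Second, \cite[Theorem 7.2]{gauthier2023sparsity} is an arithmetic statement: it controls points of small canonical height on the critical set of $f\in\hat U(\overline{\Qb})$, i.e.\ for maps defined over $\overline{\Qb}$. Corollary~\ref{cor-unif} is a statement over $\Cb$. The paper therefore needs a non-trivial specialization step to pass from $\hat U(\overline{\Qb})$ to $\hat U(\Cb)$: assuming some $f\in\hat U(\Cb)$ fails the conclusion, one reduces by compactness of bounded-degree cycle spaces to a fixed pair $p>q$, then uses that $\crit_F$ and $\mathrm{Preper}_{p,q}$ are defined over $\overline{\Qb}$, picks a sequence of $\overline{\Qb}$-points converging to $f$ inside the appropriate components of their intersection, and invokes Remmert's open mapping theorem to transport the conclusion to the limit. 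Your proposal does not mention the arithmetic nature of the input theorem at all, and the pull-back-to-$\mathcal P_d^n$ discussion you give, while correct, is a much smaller point than the $\overline{\Qb}\to\Cb$ argument that is actually required.
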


Here, $\crit_f$ is the critical set of $f$ in $\Cb^n$ and $\mathrm{Preper}(f)$ is its set of preperiodic points, i.e. $\mathrm{Preper}(f):=\{\bfw\in\Cb^n\,;\,\exists p>q\geq0,\ f^p(\bfw)=f^q(\bfw)\}.$ Observe that when $f$ is PCF then preperiodic points are Zariski dense in $\crit_f$. 
When $n=2,$ the sets $W_f$ are finite so the bound on $\deg(W_f)$ gives a satisfactory uniform bound on the cardinality of $W_f$ in the spirit of the uniform results in arithmetic geometry (see e.g. \cite{Dimitrov-Gao-Habegger} and \cite{DKY1}) and arithmetic dynamics (see e.g. \cite{Mavraki_Schmidt} and \cite{DeMarco-Mavraki}). In higher dimensions, although less precise, the proof of Corollary \ref{cor-unif} addresses the analogue in $\tilde{\mathcal P}_d^n$
 of Problem 6.3.9 of Yuan-Zhang \cite{YZ-adelic} about the arithmetic bigness of the adelic line bundle associated to the critical height. Observe that  \cite[Problem 6.3.9]{YZ-adelic} is widely open on $\mathcal M_d^n.$


Finally, Theorem~\ref{main_theorem_Pn} provides a positive answer to the first part of Question~19.4 of J.~Doyle and J.~Silverman~\cite{doyle-silverman} with an explicit (but possibly still non-optimal) constant~$\tilde p$ from Theorem~\ref{main_theorem_Pn}. To state the result, we observe that for any $n, p\ge 1$ and $d\ge 2$, one can consider a single valued function 
$$
\mu_{p,n,d}\colon\mathcal M_d^n \to \bbC^{n\nu(p,n,d)},
$$
obtained by taking all eigenvalue functions of all cycles of period $n$ (counted with multiplicities) and factoring them through the corresponding symmetric polynomials. (Here, $\nu(p,n,d)$ is the number of $p$-cycles of a generic map $f$ from $\mathrm{End}_d(\bbP^n)$.)

\begin{corollary}\label{cor-Doyle_Silverman}
	Let $\tilde p$ be the same as in Theorem~\ref{main_theorem_Pn}. Then for any $n\ge 1$, $d\ge 2$ and $p\ge \tilde p$, the map $\mu_{p,n,d}$ is quasi-finite on a nonempty Zariski open subset.	
\end{corollary}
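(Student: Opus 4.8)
The plan is to reformulate the quasi-finiteness of $\mu_{p,n,d}$ as an equality of transcendence degrees of function fields and then feed that equality Theorem~\ref{main_theorem_Pn}. First I would record that $\mathrm{End}_d(\bbP^n)$ is a Zariski open subset of the projective space of $(n+1)$-tuples of degree-$d$ forms (the complement of the Macaulay resultant hypersurface), hence irreducible; therefore $\mathcal M_d^n$ is an irreducible affine variety with $\operatorname{tr.deg}_{\bbC}\bbC(\mathcal M_d^n)=\dim\mathcal M_d^n=(n+1)N_{d,n}$. Set $M:=n\nu(p,n,d)$ and let $\lambda_1,\dots,\lambda_M$ denote the eigenvalues of all the period-$p$ cycles of a generic map of $\mathrm{End}_d(\bbP^n)$, viewed as branches of eigenvalue functions; by construction the $M$ coordinate functions of $\mu_{p,n,d}$ are the elementary symmetric polynomials $e_1,\dots,e_M$ in $\lambda_1,\dots,\lambda_M$. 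Each $\lambda_i$ is algebraic over $\bbC(\mathcal M_d^n)$ (the eigenvalue functions are multivalued algebraic functions), and each $e_j$ is single valued and invariant under $\mathrm{PGL}_{n+1}(\bbC)$ (eigenvalues of a Jacobian are conjugacy invariants), so $e_j\in\bbC(\mathcal M_d^n)$.

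Next I would carry out the reduction. Let $K:=\bbC(e_1,\dots,e_M)\subseteq\bbC(\mathcal M_d^n)$; this is the function field of the Zariski closure $Z\subseteq\bbC^M$ of the image of $\mu_{p,n,d}$. Since $\mu_{p,n,d}\colon\mathcal M_d^n\to Z$ is a dominant morphism of irreducible varieties, its generic fibre has dimension $\dim\mathcal M_d^n-\dim Z$, so $\mu_{p,n,d}$ is quasi-finite on a nonempty Zariski open subset if and only if $\dim Z=\dim\mathcal M_d^n$, i.e.\ if and only if $\operatorname{tr.deg}_{\bbC}K=(n+1)N_{d,n}$. Now each $\lambda_i$ is a root of $T^M-e_1T^{M-1}+\dots+(-1)^Me_M\in K[T]$, so $K(\lambda_1,\dots,\lambda_M)$ is algebraic over $K$ and $\operatorname{tr.deg}_{\bbC}K=\operatorname{tr.deg}_{\bbC}K(\lambda_1,\dots,\lambda_M)$. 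Thus it suffices to exhibit $(n+1)N_{d,n}$ algebraically independent elements among $\lambda_1,\dots,\lambda_M$; combined with $K\subseteq\bbC(\mathcal M_d^n)$ this forces $\operatorname{tr.deg}_{\bbC}K=(n+1)N_{d,n}$ and finishes the proof.

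For this last step I would invoke Theorem~\ref{main_theorem_Pn}: for $p\ge\tilde p$, choose $(n+1)N_{d,n}$ eigenvalue functions of period-$p$ cycles attached to pairwise distinct cycles — which is possible because a generic map has $\nu(p,n,d)$ cycles of period $p$ and an elementary count of fixed points of iterates gives $\nu(p,n,d)\ge(n+1)N_{d,n}$ for every $p\ge\tilde p$ (the extreme cases $p=\tilde p$ being checked directly). Theorem~\ref{main_theorem_Pn} then asserts that these functions, which lie among $\lambda_1,\dots,\lambda_M$, are algebraically independent over $\bbC$, closing the chain of inequalities. The one genuinely delicate point is the bookkeeping in this paragraph: one must make sure the chosen eigenvalue functions are pairwise distinct and really correspond to distinct cycles of period exactly $p$, so that the hypotheses of Theorem~\ref{main_theorem_Pn} are met verbatim. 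The remaining ingredients — irreducibility of $\mathcal M_d^n$, the fact that $\mu_{p,n,d}$ is a genuine morphism obtained from eigenvalues that vary algebraically when counted with multiplicity, and the algebraic-extension argument for transcendence degrees — are routine.
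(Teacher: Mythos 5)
Your proposal is correct and follows essentially the same route as the paper: both reduce the statement, via Theorem~\ref{main_theorem_Pn}, to the claim that a generic map has at least $(n+1)N_{d,n}$ distinct cycles of period $p$ once $p\ge\tilde p$. Note, however, that you have written out in detail the reduction from quasi-finiteness to transcendence degrees (which the paper dispatches in one sentence) while relegating the cycle count to a remark; in the paper this count is the whole content of the proof — a lower bound $\nu(p,n,d)\ge (d^p-d^{[p/2]})(d^p-1)^{n-1}/p$ in the spirit of Lemma~\ref{choice_lemma}, followed by a verification of the resulting numerical inequality for all $p\ge 4$, $d\ge 2$, $n\ge 1$ — so to make your proof self-contained you would still need to carry that estimate out.
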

Due to Theorem~\ref{main_theorem_1}, an analogous statement also holds on the moduli space $\tilde{\mathcal P}_d^n$ of polynomial maps. The proof of Corollary~\ref{cor-Doyle_Silverman} is provided in the end of Section~\ref{sec_main_proofs}. 
Several results similar to Corollary~\ref{cor-Doyle_Silverman}, but for very specific subsets of $\mathcal M_d^n$ were previously obtained in~\cite{hutz}.

We also note that the results of~\cite{gauthier2023sparsity} imply the existence of a positive integer $\hat p(n,d)$, such that for all $n\ge 1$, $d\ge 2$ and $p\ge \hat p(n,d)$, the direct product map 
$$\prod_{p\le \hat p(n,d)} \mu_{p,n,d}\colon\mathcal M_d^n\to \prod_{p\le \hat p(n,d)} \bbC^{n\nu(p,n,d)}
$$
is quasi-finite on a nonempty Zariski open subset. However, the approach of~\cite{gauthier2023sparsity} does not provide explicit bounds on $\hat p(n,d)$ nor its dependence on $n$ and $d$. Observe that when $n=1,$ Ji-Xie recently showed \cite{ji2024multiplier} that $(\mu_{1,1,d},\ldots,\mu_{p(d),1,d})$ is generically injective in $\mathcal M_d^1$ for $p(d)$ large enough, answering a question of McMullen in  \cite{mcmullen-root}. In the polynomial case in one variable, Huguin further refined this result in \cite{huguin2024modulispacespolynomialmaps}, proving that $(\mu_{1,1,d},\mu_{2,1,d})$ is already generically injective in $\tilde{\mathcal P}_d^1.$

\subsection{Irreducibility of varieties of marked periodic orbits over $\mathcal P_d^n$ and $\mathrm{End}_d(\Pb^n)$}
Our second main result that is also a key ingredient of the proof of Theorem~\ref{main_theorem_1} and Theorem~\ref{main_theorem_Pn}, is a theorem on irreducibility of the space $\cP_d^n$ or $\mathrm{End}_d(\Pb^n)$ with $N$ marked periodic orbits. In what follows, we mainly focus on the polynomial setting, i.e., in $\mathcal P_d^n$.

Let $n\geq2,$ $d\geq2$ and $N\geq1$ and let $\mathbf p=(p_1,\ldots,p_N)\in\Zb_{>0}^N.$ We define $X^n_{d,\mathbf p}$ as the closure of
$$\tilde X^n_{d,\mathbf p}=\left\{(f,\bfz_1,\ldots,\bfz_N)\in\mathcal P_d^n\times(\Cb^n)^N\,;\,\begin{array}{c}\bfz_i\text{ is a non-parabolic periodic point}\\ \text{of exact period}\ p_i\text{ of }f\text{ and no two }\\\bfz_j\text{ are in the same orbit of }f\end{array}\right\}.$$

The natural projection $\pi\colon X^n_{d,\mathbf p}\to\mathcal P_d^n$ defined a ramified cover and we are interested in the action of the monodromy on the fibers of this cover. More precisely, let $p_{\max}:=\max_{1\leq i\leq N}p_i$ and let 
$\mathcal P_{d}^n(p_{\max})\subset \mathcal P_d^n$ be the Zariski open subset of $\mathcal P_d^n$ that consists of all maps whose cycles of period less than or equal to $p_{\max}$ don't have eigenvalues that are equal to $1$. The analytic continuation of a $p$-periodic point, with $p\leq p_{\max}$, along a path in $\mathcal P_d^{n}(p_{\max})$ is well-defined. In particular, starting at the base point $F_0\in\mathcal P_d^{n}(p_{\max})$ defined by $F_0(z_1,\ldots,z_n)=(z_1^d,\ldots,z_n^d),$ the fundamental group of $\mathcal P_d^{n}(p_{\max})$ acts on the set of periodic points of period bounded by $p_{\max}$ of $F_0$ by permutations. An obvious constraint on this action is that it has to commute with the dynamics (after a permutation, the periods of periodic points cannot change, and the relative positions of periodic points in a cycle remain the same). Theorem~\ref{main_theorem_2} says that this is the only contraint. Actually, this action naturally extends to an action on the fiber $\pi^{-1}(F_0),$ that we call \emph{the action by monodromy} of $X^n_{d,\mathbf p}\to\mathcal P_d^n$ on its fibers, and we have the following result.

\begin{theorem}\label{main_theorem_2}
	For all $n, N\geq1$, $d\geq2$ and $\mathbf p=(p_1,\ldots,p_N)\in\Zb_{>0}^N$, the action by monodromy of $X^n_{d,\mathbf p}\to\mathcal P_d^n$ on its fibers is transitive. In particular $X^n_{d,\mathbf p}$ is irreducible. Moreover, the analogue result holds on $\mathrm{End}_d(\Pb^n)$.
\end{theorem}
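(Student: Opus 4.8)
The plan is to prove transitivity of the monodromy action by a combination of an induction on the number $N$ of marked orbits and a careful analysis of local moves near a well-chosen base point. I would first reduce to the case where all periodic points belong to distinct cycles of a \emph{single} common period $p$; indeed, it suffices to show that, fixing all but one marked point, the monodromy already acts transitively on the remaining $p$-periodic points that are not in the orbit of the others and have the right period, since combining such partial transitivities (one orbit at a time, in increasing order of period) yields full transitivity on the product fiber. The constraint that monodromy must commute with the dynamics means the symmetric group we can hope to realize is the full wreath-type group $S_{m}\wr \Zb_{p}$ permuting the $m$ cycles of period $p$ and cyclically shifting within each cycle; realizing the cyclic shift within a single cycle is easy (follow a small loop around a parameter where that cycle acquires a multiplier that is a primitive $p$-th root of unity, or simply use that the labelling of points inside one cycle is a torsor under $\Zb_p$ with no further constraint), so the heart of the matter is to transpose two distinct $p$-cycles.

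The key tool for transposing two cycles is a \emph{collision-and-resolution} argument. I would exhibit a one-parameter family in $\mathcal P_d^n$ along which two $p$-cycles come together in a parabolic bifurcation: as the parameter passes through the bifurcation value, two distinct $p$-periodic points merge into a single parabolic $p$-periodic point (multiplier a $p$-th root of unity, say, or a period-doubling/period-tripling scenario) and then split apart again, producing a loop in $\mathcal P_d^n(p_{\max})$ whose monodromy interchanges the two cycles while fixing everything else. Concretely, near the base point $F_0(z)=(z_1^d,\dots,z_n^d)$ one has complete control: the $p$-periodic points of $F_0$ are the points all of whose coordinates are suitable roots of unity, the local eigenvalue functions are explicit, and one can perturb $F_0$ in a one-parameter family supported in a single coordinate (reducing to a one-variable polynomial family $z_1\mapsto z_1^d+c\,$ or similar) where the classical theory of parabolic bifurcations on $\Pb^1$ — as used in \cite{gorbovickis-poly} — already provides loops interchanging any two chosen cycles of the same period. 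Pulling these one-variable loops back into $\mathcal P_d^n$ via the product structure, and checking that the other coordinates and the other marked points are unaffected, gives the required transpositions. One then needs to pass from ``transposing the two cycles coming from one coordinate'' to ``transposing any two cycles'': this is done by connecting different $p$-cycles of $F_0$ through the already-established local moves and through the action of the (connected) subgroup of diagonal/permutation symmetries of $F_0$ inside $\mathcal P_d^n$, which acts on the set of $p$-cycles; since this symmetry action together with the one-coordinate transpositions generates a transitive group on the cycles, transitivity follows.

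For the statement on $\mathrm{End}_d(\Pb^n)$ I would argue that $\mathcal P_d^n$ embeds, after homogenization, as a subvariety of $\mathrm{End}_d(\Pb^n)$ (regular polynomial endomorphisms are exactly those fixing the hyperplane at infinity with an attracting-type behaviour there), so that loops constructed in $\mathcal P_d^n(p_{\max})$ give loops in the corresponding open subset of $\mathrm{End}_d(\Pb^n)$; since $\mathrm{End}_d(\Pb^n)$ is connected and its relevant open subset is connected, the monodromy group over $\mathrm{End}_d(\Pb^n)$ contains the one over $\mathcal P_d^n$, hence is transitive, and the new periodic points ``at infinity'' that appear for genuine $\Pb^n$-endomorphisms are incorporated by the same collision argument applied to cycles living (partly) in the hyperplane at infinity. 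The main obstacle I expect is the careful bookkeeping in the collision-and-resolution step: one must verify that the parabolic bifurcation can be arranged to involve \emph{exactly} the two desired $p$-cycles (and no spurious lower-period cycles, which is why the period hypotheses and the set $\mathcal P_d^n(p_{\max})$ enter), that the resulting local monodromy is a genuine transposition rather than some longer permutation, and that the loop stays inside $\mathcal P_d^n(p_{\max})$ so that analytic continuation of all the \emph{other} periodic points is unobstructed. Controlling this simultaneously for all cycles — i.e. showing the bifurcation is ``as simple as possible'' — is the delicate point, and it is precisely here that reducing to one-variable families near $F_0$, where everything is explicit, does the real work.
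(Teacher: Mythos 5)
There is a genuine gap, and it sits exactly at the step you describe as ``pulling these one-variable loops back into $\mathcal P_d^n$ via the product structure, and checking that the other coordinates and the other marked points are unaffected.'' That checking step fails, for a structural reason that the paper singles out at the start of Section~2: a loop in the space of \emph{product} maps $(x_1,\dots,x_n)\mapsto(f_1(x_1),\dots,f_n(x_n))$ preserves the ``type'' $(p_1,\dots,p_n)$ of every periodic point (the vector of periods of the individual coordinates), so it can never swap, say, a period-$2$ point of type $(1,2)$ with a period-$2$ point of type $(2,2)$. Worse, even for two cycles of the same type, a product-map loop that swaps $x_0$ with $x_0'$ in the first coordinate will move \emph{every} periodic point of the form $(x_0, y)$ to a point of the form $(x_0', y')$, so the ``all other marked points unaffected'' requirement cannot be met this way. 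Your fallback appeal to a ``connected subgroup of diagonal/permutation symmetries of $F_0$'' also does not help: the stabilizer of $F_0$ is discrete (coordinate permutations and roots-of-unity scalings), and conjugating by a discrete symmetry is not a loop in parameter space, so it contributes nothing to the monodromy group.

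The paper resolves both obstructions by leaving the product family. The key device is to deform $F_0$ into a \emph{skew product} with a large coupling coefficient $b$, e.g.\ $F(u_1,\dots,u_n)=(u_1^d,\dots,u_{n-1}^d,u_n^d+c(t)+b\,h(u_1,\dots,u_{n-1}))$, and to invoke Lemma~\ref{le-hyp}: for $|b|$ large, the one-variable composition sitting over any \emph{other} base cycle is hyperbolic with Cantor Julia set, so the loop $t\mapsto c(t)$ is contractible in the space of hyperbolic polynomials and hence induces the trivial permutation on the fiber over those base cycles. This is what makes the transposition surgical (Proposition~\ref{prop-facile} and Proposition~\ref{prop-same-type}). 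The type obstruction is handled separately in Proposition~\ref{prop-change-type}, using a nearly parabolic one-variable map $f_\epsilon$ whose $p_1$-cycle is clustered near $0$ except for one point near $1$: in a suitable skew product this makes the second-coordinate return map close to $y\mapsto y^{d^{p_1}}+c$, so one can \emph{change} the type of a chosen cycle to $(1,p)$ before applying the transposition. Neither the hyperbolicity trick nor the type-changing deformation appears in your proposal, and without them the loop you describe cannot be made to act by a single transposition fixing all other marked cycles. (Your treatment of $\mathrm{End}_d(\Pb^n)$ would also need the concrete conjugation-by-$\PSL$ path that moves a periodic point at infinity into the affine chart, as in Theorem~\ref{th-pk}; simply saying ``incorporated by the same collision argument'' is not enough because the affine loops act trivially at infinity.)
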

In particular, it implies that the cover $X^n_{d,\mathbf p}\to\mathcal P_d^n$ is Galois, i.e. that its covering automorphisms act transitively on the fibers. 

So far, Theorem~\ref{main_theorem_2} has been known only for the case $n=1$: the case $n=1$ and $d=2$ has been proven in~\cite{bousch}, and the more general case $n=1$, $d\ge2$ was shown in~\cite{schleicher-galois} even in the more restrictive setting of unicritical polynomials instead of all degree $d$ polynomials $\mathcal P_d^1$ (see also~\cite{morton}). Observe that the case $n\geq2$ and $N=1$ over $\mathrm{End}_d(\Pb^n)$ has been solved by Fakhruddin in \cite{fakhruddin}. His strategy relies on the fact that in $\mathrm{End}_d(\Pb^n)$, there are more ways to deform the power map $F_0$  while preserving an invariant fibration. We were not able to adapt his proof neither to the case $N\geq2,$ nor to the polynomial case, nor for eigendirections (see below).

Using families that admit an invariant fibration, the one-dimensional result easily yields some permutations. The main difficulty in proving Theorem~\ref{main_theorem_2} lies in overcoming the restrictions imposed by the fibration on the possible permutations, which we address using dynamical arguments.

Finally, it is important to mention that our proof of Theorem~\ref{main_theorem_1} relies not only on the fact that all permutations of periodic points commuting with the dynamics can be implemented, but also on the possibility to interchange eigendirections of the Jacobian matrices at periodic points via a monodromy in $\mathcal P_d^n$. The latter is guaranteed by the irreducibility theorem that we state below. This theorem extends the result of Theorem~\ref{main_theorem_2}.

For $n\ge 2$, the tangent line bundle to $\bbC^n$ is diffeomorphic to $\bbC^n\times\Pb^{n-1}$.
Given $n\geq2,$ $d\geq2,$ $N\geq1$ and $\mathbf p=(p_1,\ldots,p_N)\in\Zb_{>0}^N,$ we define $Z^n_{d,\mathbf p}$ as the closure of
$$
\tilde Z^n_{d,\mathbf p}=\left\{(f,((\bfz_i,\bfv_i))_{1\leq i\leq N})\in\mathcal P_d^n\times(\Cb^n\times\Pb^{n-1})^N\,;\,\begin{array}{c}\bfz_i\text{ is a non-parabolic periodic point of}\\ \text{exact period}\
p_i\text{ of }f, \text{ no two } \bfz_j \text{ are in}\\\text{the same orbit of }f\text{ and } \bfv_i \text{ is a simple}\\\text{eigendirection of }D_{\bfz_i}f^{n_i}\end{array}\right\}.
$$

We prove the following:
\begin{theorem}\label{main_theorem_3}
	For all $n, d\geq2$, $N\geq1$ and $\mathbf p=(p_1,\ldots,p_N)\in\Zb_{>0}^N$, the action by monodromy of $Z^n_{d,\mathbf p}\to\mathcal P_d^n$ on its fibers is transitive. In particular $Z^n_{d,\mathbf p}$ is irreducible. Again,  the analogue result holds on $\mathrm{End}_d(\Pb^n)$.
\end{theorem}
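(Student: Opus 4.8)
The plan is to bootstrap Theorem~\ref{main_theorem_3} from Theorem~\ref{main_theorem_2}, exactly in the spirit that the authors flag when they say the eigendirection statement ``extends'' the periodic-point statement. Since $Z^n_{d,\mathbf p}\to X^n_{d,\mathbf p}$ is itself a ramified cover (over a point $(f,(\bfz_i))$ the fiber consists of the tuples of eigendirections of the $D_{\bfz_i}f^{p_i}$, generically $n^N$ of them), it suffices to prove two things: (i) the monodromy of $X^n_{d,\mathbf p}\to\mathcal P_d^n$ already acts transitively on the base factor (this is Theorem~\ref{main_theorem_2}), and (ii) the monodromy of the subcover $Z^n_{d,\mathbf p}\to X^n_{d,\mathbf p}$, i.e. loops in $\mathcal P_d^n$ that return the marked points $\bfz_i$ to themselves, acts transitively on the eigendirection data over a fixed generic fiber. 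By a standard group-theoretic argument on towers of covers, (i) and (ii) together give transitivity of the full monodromy on $Z^n_{d,\mathbf p}$, hence its irreducibility; and the $\mathrm{End}_d(\Pb^n)$ statement will follow verbatim since Theorem~\ref{main_theorem_2} is asserted there too.

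For step (ii), the first reduction is to a single marked point, $N=1$: because one can move the $\bfz_j$ far apart and into dynamically independent regions (the same mechanism used to pass from $N=1$ to general $N$ in Theorem~\ref{main_theorem_2}), a loop affecting only the eigendirection at $\bfz_1$ while fixing everything else can be constructed, so it is enough to show that for a single non-parabolic $p$-cycle the monodromy permutes the $n$ simple eigendirections of $D_{\bfz}f^p$ by the full symmetric group $S_n$. Here the natural tool is local: starting from a base map $f_0$ with a $p$-cycle whose multiplier matrix $A=D_{\bfz}f^p$ is, say, diagonal with distinct eigenvalues $\lambda_1,\dots,\lambda_n$, one writes down an explicit algebraic deformation $f_t$ inside $\mathcal P_d^n$ under which the cycle persists and $A_t$ traces a loop in the space of matrices with distinct eigenvalues that realizes a prescribed element of $\pi_1$ of that space — and $\pi_1$ of the space of $n\times n$ complex matrices with distinct eigenvalues surjects onto $S_n$ (it is the configuration-space fundamental group, i.e. the braid group $B_n$, which surjects onto $S_n$). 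Concretely, perturbing $f$ near the periodic point so that $A_t$ runs through, e.g., a rotation in a $2\times 2$ block that swaps two eigenvalues produces the transposition interchanging the corresponding eigendirections; composing such moves gives all of $S_n$. The nontrivial point is that these matrix deformations can actually be realized by \emph{regular} polynomial perturbations of $f_0$ that do not disturb the other constraints defining $\tilde Z^n_{d,\mathbf p}$ — this should follow by a dimension count (the space of allowed perturbations surjects onto the tangent space of the relevant matrix-deformation problem), possibly after choosing the base map $f_0$ as something concrete like a perturbation of the power map.

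The main obstacle I expect is precisely the interaction between (i) and (ii): a loop realizing a desired eigendirection permutation will in general also permute the periodic points, and conversely the loops used in the proof of Theorem~\ref{main_theorem_2} act on eigendirections in some uncontrolled way. So one cannot simply ``multiply'' the two statements without care. The clean way to handle this is the tower argument: let $G\le \mathrm{Aut}$ of the generic fiber of $Z^n_{d,\mathbf p}$ be the monodromy group; its image under $Z\to X$ is, by Theorem~\ref{main_theorem_2}, transitive on the $X$-fiber, so it remains to see that the stabilizer in $G$ of a point of the $X$-fiber still acts transitively on the eigendirections over that point, which is exactly statement (ii) above — and (ii), being about loops that fix the marked points, is insensitive to the permutation action of (i). A secondary technical point is the behaviour at the ramification locus (parabolic cycles, repeated eigenvalues, coincidences of orbits): one works on the smooth locus where the cover is genuinely a covering space, invokes connectivity there, and then takes Zariski closure to conclude irreducibility of $Z^n_{d,\mathbf p}$; the authors' choice to \emph{define} $Z^n_{d,\mathbf p}$ as a closure is tailored to make this last step automatic.
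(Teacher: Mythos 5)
Your overall tower-of-covers framework, and the braid-group/matrix-monodromy philosophy for step (ii), are indeed the right way to think about what is being proved, and they match the paper's point of view at a high level (the paper explicitly says it remains only ``to obtain permutations of the eigendirections'' once Theorem~\ref{main_theorem_2} is in hand, and that the idea is to loop around a locus where a Jacobian acquires a Jordan block). However, the core of your argument — the ``dimension count (the space of allowed perturbations surjects onto the tangent space of the relevant matrix-deformation problem)'' — hides exactly the step where all the work happens, and as stated it does not give a valid proof.

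The gap is this. Surjectivity of the linearized map from polynomial perturbations to perturbations of the single Jacobian $D_{\bfz_1}f^p$ would at best let you prescribe what happens at one cycle. It says nothing about what the same loop does to the eigendirections at the other marked cycles $\bfz_j$, $j\neq 1$, nor even to the non-marked cycles that are used as intermediate markers elsewhere in the argument. The paper's actual difficulty is precisely to \emph{simultaneously} realize the desired transposition at one cycle \emph{and} prove that the loop is trivial (both on points and on eigendirections) everywhere else. This is done by (a) a careful choice of base map (a generic product map $F$ with pairwise disjoint spectra, not $F_0$), (b) a careful choice of the one-variable polynomial $g$ so that the Jordan-block condition is actually met and the key constants $P,Q$ in Proposition~\ref{prop-dir} are nonzero, and (c) the hyperbolicity Lemma~\ref{le-hyp}, which forces the loop $t\mapsto c(t)$ to become contractible in a space of hyperbolic compositions when viewed from the other cycles, so that nothing else moves. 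Your invocation of ``moving $\bfz_j$ far apart into dynamically independent regions'' gestures at (c), but separation alone does not kill monodromy — you need the robust hyperbolicity + contractibility mechanism of Lemma~\ref{le-hyp}, and a check that the relevant one-dimensional loop $t\mapsto c(t)$ is nullhomotopic in the shift locus. None of that is recovered by a tangent-space surjectivity count.

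Two further points you should not skip. First, the base point: $F_0$ lies \emph{outside} $\tilde{\mathcal P}_d^n(p_{\max})$ because its cycles have repeated eigenvalues, so the fiber of $Z^n_{d,\mathbf p}$ over $F_0$ is positive-dimensional and the cover is not étale there; the paper explicitly flags this and therefore performs the eigendirection monodromy at a nearby generic product map, not at $F_0$. Your tower argument must likewise be based at a point of $\tilde{\mathcal P}_d^n(p_{\max})$. Second, going from $n=2$ to general $n$ requires an extra device: the paper uses quadratic ($x_q^2$) coupling terms, together with nested applications of Lemma~\ref{le-hyp}, so that the 2-dimensional transposition of Proposition~\ref{prop-dir} does not contaminate the extra coordinates. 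A transposition ``in a $2\times 2$ block'' does not automatically extend to a regular polynomial deformation of an $n$-variable map that is trivial on the remaining directions; this has to be engineered, and your proposal does not address it.
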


Observe that some of the fibers of the projection $Z^n_{d,\mathbf p}\to\mathcal P_d^n$ have positive dimensions. This is in particular the case for the fiber above $F_0(z_1,\ldots,z_n)=(z_1^d,\ldots,z_n^d),$ which is also a source of minor technical difficulties arising in the proof of Theorem~\ref{main_theorem_1}. 
\subsection{Strategy of the proof of Theorem~\ref{main_theorem_1} and structure of the paper}\label{sec_strategy}

Any tuple of $n N_{d,n}$ eigenvalue functions corresponding to $n N_{d,n}$ distinct periodic orbits, can be viewed as a tuple of algebraic functions on an irreducible component of an appropriate algebraic variety $Z_{d,\mathbf p}^n$. (The vector $\mathbf p$ here is the vector of periods of the selected periodic orbits.) Thus, in order to prove Theorem~\ref{main_theorem_1}, it is sufficient for any selection of the eigenvalue functions to find a point on the corresponding irreducible component of $Z_{d,\mathbf p}^n$, at which these eigenvalues are \textit{locally independent} (i.e., the Jacobian matrix of the partial derivatives of the eigenvalue functions has full rank).

On the other hand, Theorem~\ref{main_theorem_3} which we prove in Section~\ref{sec_irreducibility} together with Theorem~\ref{main_theorem_2}, implies that the variety $Z_{d,\mathbf p}^n$ is irreducible. Hence, existence of a point in $Z_{d,\mathbf p}^n$, where the selected eigenvalue functions are locally independent, is implied by the following proposition.

\begin{proposition}\label{lambda_loc_indep_prop}
	Given any finite sequence of integers $p_1,\ldots,p_{nN_{d,n}} \ge 4$, there exists a map $F\in\mathcal P_d^n$ and a finite sequence of eigenvalue functions $\lambda_1,\ldots,\lambda_{nN_{d,n}}$ corresponding to distinct periodic orbits of the respective periods $p_1,\ldots,p_{nN_{d,n}}$, such that the eigenvalue functions $\lambda_1,\ldots,\lambda_{nN_{d,n}}$ are locally independent at $F$.
\end{proposition}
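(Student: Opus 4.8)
The plan is to construct a single map $F\in\mathcal P_d^n$ together with explicit eigenvalue functions whose local independence can be checked by a direct perturbation argument, reducing everything to the one-variable situation. First I would look for $F$ among the maps that split as a product of one-dimensional polynomials: writing $F(z_1,\dots,z_n)=(g(z_1),\dots,g(z_n))$ for a suitable $g\in\mathcal P_d^1$, the periodic points of $F$ are products of periodic points of $g$, the Jacobian $D_{\bfw}F^p$ is diagonal with entries $(g^{p})'$ at the corresponding one-dimensional periodic points, and hence the eigenvalues of $F$ are exactly the multipliers of $g$. Since the multipliers of periodic orbits of period $\ge 4$ of a polynomial in one variable are known, by the first author's result in \cite{gorbovickis-poly}, to be locally independent on $\mathcal P_d^1$ (indeed $N_{d,1}$ of them can be used as local coordinates), this gives for free $N_{d,1}$ independent directions in the ``diagonal'' copy of $\mathcal P_d^1$ inside $\mathcal P_d^n$. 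The issue is that $N_{d,1}$ is much smaller than $nN_{d,n}$, so a product map alone is far from enough, and I must also exploit deformations of $F$ that move the coordinates independently and that see the off-diagonal structure.

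The key step is therefore a ``coordinate-by-coordinate'' decoupling. I would work near a product map $F_0=(g,\dots,g)$ and consider, for a selected periodic orbit of $F$, an eigendirection aligned with one of the coordinate axes, say the $j$-th; the associated eigenvalue function, restricted to the subfamily of maps of the form $(h_1,\dots,h_n)$ with $h_i\in\mathcal P_d^1$, depends (to first order) only on $h_j$ and equals the corresponding one-dimensional multiplier of $h_j$. Choosing for each $j\in\{1,\dots,n\}$ a batch of $N_{d,n}$ periodic orbits of $F$ of the prescribed periods whose relevant eigendirection lies along the $j$-th axis, and invoking again the one-variable independence (now with $N_{d,n}\le N_{d,1}$ multipliers, which is still a free family in $\mathcal P_d^1$), one obtains a block-diagonal Jacobian: the derivatives of the $j$-th batch with respect to the $h_j$-parameters form a full-rank block, while the cross-blocks in $h_i$, $i\neq j$, vanish. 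Summing over $j$ yields a block triangular (in fact block diagonal) $nN_{d,n}\times nN_{d,n}$ matrix of full rank, which is exactly the local independence asserted in the proposition. A bookkeeping point to handle here is that one must be able to realize, for each coordinate axis and each prescribed period $p\ge4$, enough distinct periodic orbits of $F_0$ with an eigendirection along that axis and with no two in the same orbit; this is immediate from the product structure since the period-$p$ orbits of $g$ are numerous for every $p\ge 4$ and can be combined with arbitrary fixed points in the other coordinates.

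The main obstacle I anticipate is precisely the counting/matching of periods: Proposition~\ref{lambda_loc_indep_prop} allows the periods $p_1,\dots,p_{nN_{d,n}}$ to be prescribed \emph{arbitrarily} (subject only to $p_i\ge4$), whereas the product construction most naturally produces, for a coordinate axis $j$, eigenvalues equal to one-dimensional multipliers of orbits of $g$ of the \emph{same} period. One must check that the one-variable independence in \cite{gorbovickis-poly} is flexible enough to select, for any prescribed multiset of $N_{d,n}$ periods $\ge4$ (with repetitions), a free family of multiplier functions on $\mathcal P_d^1$ realizing exactly those periods. This is true, since \cite{gorbovickis-poly} gives independence of any $N_{d,1}$ multiplier functions of distinct orbits of periods $\ge 1$, and there are at least $N_{d,1}$ distinct orbits of $g$ of each period $\ge 4$ when $d\ge 2$. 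A second, more minor, technical point is that for $F$ not to degenerate—so that all the selected orbits are non-parabolic with simple eigendirections along the expected axes, matching the ambient variety $Z_{d,\mathbf p}^n$—one should perturb the product map $F_0$ slightly (staying within $\mathcal P_d^n$) before computing the Jacobian, or alternatively choose $g$ generic in $\mathcal P_d^1$ from the outset so that all period-$p$ multipliers are distinct and different from $1$; this is harmless because local independence is an open condition and the one-variable independence already holds on a Zariski-dense open set of $\mathcal P_d^1$. Once these points are dispatched, the full-rank conclusion for the block-diagonal Jacobian is a formality.
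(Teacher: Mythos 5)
Your proposal has a genuine and fatal gap in the dimension count. You claim in the key step that ``$N_{d,n}\le N_{d,1}$,'' but this inequality is false: $N_{d,1}=\binom{d+1}{1}-2=d-1$, whereas for $n\ge 2$ one has $N_{d,n}=\binom{d+n}{n}-n-1$, which grows rapidly with $n$ (for example $N_{2,2}=3>1=N_{2,1}$, $N_{3,2}=7>2=N_{3,1}$). Consequently, the ``block'' you construct for the $j$-th coordinate — the matrix of derivatives of $N_{d,n}$ eigenvalue functions with respect to the parameters of the $j$-th one-variable factor $h_j$ — is an $N_{d,n}\times(d-1)$ matrix (after normalizing $h_j$), which cannot have rank $N_{d,n}$. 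In other words, by restricting to the product subfamily $(h_1,\ldots,h_n)$ you are only differentiating along a subspace of dimension $n(d-1)$ (in moduli), which is strictly smaller than $nN_{d,n}$ for $n\ge 2$. You correctly flag at the outset that ``$N_{d,1}$ is much smaller than $nN_{d,n}$'' and that one must ``see the off-diagonal structure,'' but the argument that follows never actually leaves the product family, and the erroneous inequality $N_{d,n}\le N_{d,1}$ is what masks the shortfall.

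The paper's proof is forced to do exactly what your opening caveat asks for: it expands $F_0$ in the directions of \emph{all} the admissible monomial perturbations $P_{k,I}(\bfz)=\bfz^I\bfe_k$, including the genuinely cross-variable ones like $z_1z_2\bfe_1$, $z_1z_2z_3\bfe_1$, etc. There are $N_{d,n}$ such directions per coordinate, not $d-1$. The nontrivial content is then in Lemma~\ref{computation_lemma} and Lemma~\ref{degree_count_lemma}, which compute $\partial_{k,I}\rho_{k,\bfw_0}(F_0)$ as an explicit polynomial $Q_{k,I}(\bfw_0)$ in the coordinates of the periodic point and show that distinct admissible $I$ yield polynomials with no proportional monomials; Proposition~\ref{rho_independence_prop} then builds the nondegenerate $N_{d,n}\times N_{d,n}$ block for each $k$ by an inductive choice of periodic points using a counting estimate (Lemmas~\ref{choice_lemma} and~\ref{ineq_lemma}). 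You do anticipate some correct structural features — the block-diagonal shape of the Jacobian (which in the paper comes from $\partial_{m,I}\rho_{k,\bfw_0}(F_0)=0$ for $m\ne k$), the problem that eigenvalues cannot be followed at $F_0$ itself, and the need to perturb to a nearby map (the paper's Lemma~\ref{coincide_lemma} handles this by replacing eigenvalues with diagonal entries $\rho_{k,\bfw_0}$ and then matching them on $\tilde{\mathcal A}_d^n(p_{\max})$) — but the core independence argument as you wrote it does not go through.
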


We give a proof of Proposition~\ref{lambda_loc_indep_prop} in Section~\ref{sec_main_proofs}. The proof is based on the local considerations near the power map $F_0(z_1,\ldots,z_n)=(z_1^d,\ldots,z_n^d)$. Note that the eigenvalue functions are not well defined in a neighborhood of the power map $F_0$, so the latter cannot be selected as the map $F$ in Proposition~\ref{lambda_loc_indep_prop}. We solve this problem by considering slightly modified functions that are well defined in a neighborhood of $F_0$ and agree well with the eigenvalue functions. The corresponding statement is proven in Section~\ref{sec_diag_entries}. In Section~\ref{sec_part_der} we compute the partial derivatives of the modified eigenvalue functions at the map $F_0$, and in Section~\ref{sec_independence} we show that there exists a selection of the modified eigenvalue functions with the prescribed periods, for which the Jacobian matrix is non-degenerate. In Section~\ref{sec_main_proofs}, we complete the proof of Proposition~\ref{lambda_loc_indep_prop} (as well as Theorem~\ref{main_theorem_1}) by passing back to the actual eigenvalue functions and selecting an appropriate map $F$ sufficiently close to $F_0$. Finally, in Section~\ref{sec_coro} we explain how Corollary \ref{cor-mu-interior} and Corollary \ref{cor-unif} can be deduced from \cite{gauthier2023sparsity} using Theorem \ref{main_theorem_1}.

\bigskip

\noindent\textbf{Acknowledgement.} The authors would like to thank Valentin Huguin for pointing out the difficulty in defining the moduli space of regular polynomial endomorphisms of $\mathbb C^n$ when $n\geq2.$

\section{Irreducibility of $\mathcal P_d^n$ with marked periodic points}\label{sec_irreducibility}

The main goal of this section is to give a proof of Theorems~\ref{main_theorem_2} and~\ref{main_theorem_3}, in both $\mathcal P_d^n$ and $\mathrm{End}_d(\Pb^n),$ focusing first on the former.

Recall that for a positive integer $p_{\max}>0$, the set 
$\mathcal P_{d}^n(p_{\max})\subset \mathcal P_d^n$ is the set of all maps whose cycles of period less than or equal to $p_{\max}$ don't have eigenvalues that are equal $1$. 
As we have already seen, the analytic continuation of a $p$-periodic point, with $p\leq p_{\max}$, along a path in $\mathcal P_d^{n}(p_{\max})$ is well-defined. 

The proof of Theorem~\ref{main_theorem_2} relies on its one-dimensional version that is proven in a stronger form in~\cite{bousch} and~\cite{schleicher-galois}. We state this result below:

\begin{theorem}\label{th-dim1}
	Let $d\geq2$ and $p_{\max}\ge 1$ be two integers. Let $U_d(p_{\max})$ be the set of $c\in\Cb$ such that the polynomial $f_c(z):=z^d+c$ does not have a parabolic cycle of period $\leq p_{\max}$. Let $P_{p_{\max}}$ be the set of periodic points of $f_0$ of period at most $p_{\max}.$ Then, the analytic continuation along loops in $U_d(p_{\max})$ starting at $0$ induce all permutations of $P_{p_{\max}}$ that commute with the dynamics. In particular, the same holds for loops in a larger space $\mathcal P_d^{1}(p_{\max})$ instead of $U_d(p_{\max})$.
\end{theorem}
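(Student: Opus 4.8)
This one‑dimensional statement is due to Bousch~\cite{bousch} for $d=2$ and to~\cite{schleicher-galois} for general $d\geq2$ (see also~\cite{morton}); the plan is to reproduce their strategy. Write $g:=f_0|_{P_{p_{\max}}}$ for the dynamical permutation of the finite set $P_{p_{\max}}$, and let $G\subseteq\mathrm{Sym}(P_{p_{\max}})$ be the monodromy group generated by analytic continuation along loops in $U_d(p_{\max})$ based at $0$. Analytic continuation commutes with the dynamics, so $G$ is contained in the centralizer $Z$ of $g$. Since for $f_0(z)=z^d$ the permutation $g$ has, for every $m\leq p_{\max}$, exactly $k_m$ cycles of length $m$ (with $k_1=d$), one has $Z=\prod_{m\leq p_{\max}}\bigl(\Zb/m\Zb\wr S_{k_m}\bigr)$, where the $m$-th factor acts on the set $P_m$ of points of exact period $m$. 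Everything therefore reduces to the reverse inclusion $G\supseteq Z$.

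Fix $m\leq p_{\max}$. The exact-period-$m$ points of $f_c$ are the roots of the $m$-th dynatomic polynomial $\Phi_m^*(z,c)\in\Cb[z,c]$, and $G$ acts transitively on $P_m$ precisely when $\Phi_m^*$ is irreducible over $\Cb(c)$; this irreducibility is the deep classical input, and it is exactly what~\cite{bousch,schleicher-galois,morton} supply. Granting it, I would produce the remaining elements of $Z$ from the local monodromy of the branched cover $\{\Phi_m^*=0\}\to\Cb_c$, whose branch points are exactly the parameters at which exact-period-$m$ points collide (such a collision forces the corresponding $m$-cycle to become parabolic with multiplier $1$, hence these parameters lie outside $U_d(p_{\max})$, but small loops around them stay inside it). Two types occur. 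Around a generic parameter where two \emph{distinct} $m$-cycles merge (one point of each coalescing), the local monodromy is the permutation $\prod_i(a_i\,b_{i+j})$ of those two cycles, which maps to a transposition in the $S_{k_m}$-quotient. Around a parameter where the $m$-cycle collapses onto a cycle of lower period $q\mid m$ whose multiplier is a primitive $(m/q)$-th root of unity --- in particular, when $m\geq2$, onto a fixed point with multiplier a primitive $m$-th root of unity, which already occurs in the unicritical family --- the local monodromy is a rotation of that single $m$-cycle by $q$ steps, fixing all other periodic points of period $\leq p_{\max}$; taking $q=1$ gives a generator of a single $\Zb/m\Zb$-factor. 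Conjugating this rotation by the transitive $G$-action on the $m$-cycles produces the full kernel $(\Zb/m\Zb)^{k_m}$, and, since $G$ is generated by local loops around the branch locus, transitivity of $G$ forces the transpositions to generate the whole $S_{k_m}$. Hence $G$ surjects onto $\Zb/m\Zb\wr S_{k_m}$. (For $m=1$ there is nothing to rotate, and the collisions of pairs of fixed points --- e.g.\ around $c=\tfrac{d-1}{d}(1/d)^{1/(d-1)}\zeta$ with $\zeta^{d-1}=1$ --- already give all of $S_d$.)

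Finally, by genericity the branch parameters above can be chosen isolated from the branch loci of $\Phi_{m'}^*$ for all $m'\leq p_{\max}$ with $m'\neq m$, so the corresponding loops act trivially on every $P_{m'}$; hence $G$ surjects onto the whole product $Z$, that is $G=Z$. All the parameters invoked lie in $U_d(p_{\max})$, which gives the statement as phrased; and since $U_d(p_{\max})\subseteq\mathcal P_d^1(p_{\max})$, the last sentence is immediate. I expect the transitivity step --- the irreducibility of the dynatomic polynomials $\Phi_m^*$ over $\Cb(c)$ --- to be by far the main obstacle; once it is granted, the two local-monodromy computations need only the standard normal forms for a generic fold collision of periodic points and for a parabolic bifurcation, and the independence across periods is a routine localization argument.
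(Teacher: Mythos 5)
The paper does not give a proof of this theorem; it explicitly cites it as ``proven in a stronger form in \cite{bousch} and \cite{schleicher-galois}'' (see also \cite{morton}), so there is no in-paper argument to compare your proposal against. Your outline is a faithful sketch of the strategy of those references, and you correctly isolate the genuinely deep input --- irreducibility of $\Phi_m^*$ over $\Cb(c)$, equivalently transitivity of the monodromy on each $P_m$ --- as the part that must be granted. The remaining ingredients are sound: fold collisions of two exact-period-$m$ cycles give transpositions in the $S_{k_m}$-quotient, satellite bifurcations with $q=1$ give generators of the $\Zb/m\Zb$-factors, transitivity plus a transposition forces the full $S_{k_m}$, and conjugation spreads the cyclic generator to every $m$-cycle. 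Two small points worth tightening: your parenthetical claim that a collision of exact-period-$m$ points forces the $m$-cycle to have multiplier $1$ is wrong in the satellite case (there the parabolic cycle has lower period and its multiplier is a nontrivial root of unity); you do go on to treat both cases correctly, so only the parenthetical is off. And the ``routine localization'' for independence across periods should be made explicit: since $f_c$ is unicritical, at any given parameter at most one cycle can be non-repelling, so each local monodromy element acts nontrivially on at most one $P_m$; this, rather than a generic-position choice of branch parameters, is the mechanism behind the product decomposition $G=\prod_m(\Zb/m\Zb\wr S_{k_m})$.
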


Recall that the map $F_0\in\mathcal P_d^n$ is defined by 
$$
F_0\colon (z_1,\ldots,z_n)\mapsto (z_1^d,\ldots,z_n^d).
$$
In particular, it belongs to the direct product $(\mathcal P_d^1)^n$. 
Observe that many permutations of periodic points cannot be obtained if we only consider deformations in the space of product maps $(\mathcal P_d^1)^n$. For example, if $x_0$ and $y_0$ are respectively fixed and a $2$-periodic point for $z\mapsto z^d$ then $(x_0,y_0)\in\Cb^2$ is a $2$-periodic point for $F_0$ when $n=2.$ Its analytic continuation along a loop in $\mathcal P_d^{2}(p_{\max})$ of the form
$$F_t(x,y)=(f_t(x),g_t(y))$$
will be $(x_t,y_t)$ with $x_t$ fixed for $f_t$ and $y_t$ $2$-periodic for $g_t.$ Hence, $(x_0,y_0)$ cannot be exchanged in this way with $(y_0,y_0)$ which also has period $2.$ Moreover, if $(x_0,y_0)$ is exchanged with $(x_0',y_0')$ then all periodic points of the form $(x_0,y)$ are changed to $(x_0',y')$ for some $y'.$

To highlight this difficulty, we say that a periodic point $\mathbf z=(z_1,\ldots,z_n)$ of a product map $(x_1,\ldots,x_n)\mapsto(f_1(x_1),\ldots,f_n(x_n))$ has \emph{type} 
$$
\mathbf p=(p_1,\ldots,p_n)
$$
if each $z_i$ has period $p_i$ for $f_i.$

\subsection{Strategy of the proof of Theorem~\ref{main_theorem_2}}
We first prove the theorem in the case $n=2$ which captures the main difficulties that arise in the multi-dimensional case and are not present when $n=1$. In the proof we consider loops in $\mathcal P_d^n$ that start and end at $F_0$. The proof splits into two main steps: in Proposition~\ref{prop-same-type} we show that if two periodic points $\bfz=(x,y)$ and $\bfz'=(x',y')$ of type $(1,p)$ with either $x=x'$ or $y=y'$ belong to distinct cycles, then they can be interchanged by an appropriate loop so that all other cycles of periods $\le p_{\max}$ remain unchanged. On the other hand, if the above two points $\bfz$ and $\bfz'$ belong to the same cycle, then this cycle can be cyclically permuted by a loop so that $\bfz$ is sent to $\bfz'$ and again, all other cycles of periods $\le p_{\max}$ remain unchanged.

In the second step (Proposition~\ref{prop-change-type}) we prove that any two periodic points of the map $F_0$ with the same periods $p$ can be turned into periodic points $\bfz$ and $\bfz'$ of the kind described above, by following an appropriate loop in $\mathcal P_d^n$. Note that this loop is allowed to act nontrivially on other periodic orbits of period $\le p_{\max}$. Finally, by conjugating the loop from Proposition~\ref{prop-same-type} by the loops from Proposition~\ref{prop-facile}, we generate a sufficiently large collection of permutations of periodic points so that this collection generates the full group of permutations that commute with the dynamics.

The proof of Theorem~\ref{main_theorem_2} in $\mathcal P_d^n$ for $n\ge 3$ relies on the case $n=2$ and is given right after the proof for the case $n=2$. The extension to $\mathrm{End}_d(\Pb^k)$ is explained in Section \ref{sec-irre-Pn}

We would like to point out that one of the main difficulties in the proof is to avoid ``undesired'' permutations of periodic points (as in Proposition~\ref{prop-same-type}). This is done with the help of Lemma~\ref{le-hyp}, as also illustrated in Proposition~\ref{prop-facile} below. Furthermore, Lemma~\ref{le-hyp} also allows us to avoid undesired permutations of the eigendirections in the proof of Theorem~\ref{main_theorem_3} in the end of this section.

\subsection{Hyperbolicity condition for compositions of polynomials}

A key ingredient in order to use Theorem \ref{th-dim1} in higher dimension is the following elementary lemma. It will allow us to show that some paths are contractible in the shift locus of one variable polynomials.
\begin{lemma}\label{le-hyp}
	Let $d\geq2$ be an integer. Let $C>1.$ Let $K$ be a compact subset of $\Cb^*$ and $L$ be a compact in the set of monic degree $d$ polynomials. There exists a constant $A>0$ such that if $b\in\Cb$ satisfies $|b|>A$ and if $\alpha_1,\ldots,\alpha_N\in K$, $P_1,\ldots,P_N\in L$, for any $N\ge 1$, then the map $f:=f_N\circ\cdots\circ f_1,$ where $f_i(z):=P_i(z)-b\alpha_i$, is hyperbolic with an expanding constant larger than $C.$
\end{lemma}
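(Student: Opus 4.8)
The plan is to find a large attracting basin at infinity that the composed map $f$ maps into itself with strong expansion, using the fact that each factor $f_i$ is a small perturbation of a polynomial $P_i\in L$ followed by a large translation by $-b\alpha_i$. Concretely, first I would fix constants: let $M\ge 1$ bound all coefficients of polynomials in $L$ (so $|P(z)|\le M(1+|z|)^d$ say, and $|P'(z)|\ge$ something like $d|z|^{d-1}/2$ for $|z|$ large), and let $m:=\min_{\alpha\in K}|\alpha|>0$ and $M':=\max_{\alpha\in K}|\alpha|$ since $K\Subset\Cb^*$ is compact. The idea is that for $|b|$ large, each $f_i(z)=P_i(z)-b\alpha_i$ pushes points far out: if $|z|\le R$ for a suitable radius $R$ comparable to a power of $|b|$, then $|f_i(z)|$ is still of order $|b|$ and, more importantly, $|f_i(z)|\ge \tfrac12 m|b|=:R_0$, while if $|z|\ge R_0$ then $|f_i(z)|\ge |z|^d/2 \ge |z|$ once $R_0$ is large enough (absorbing the linear term $|b\alpha_i|\le M'|b|$ into $|z|^d/2$, which works as soon as $R_0^{d}/2 \ge R_0 + M'|b|$, i.e. $R_0\gtrsim |b|^{1/(d-1)}$ which holds for $|b|$ large since $R_0$ is linear in $|b|$ and $d\ge2$).

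Next I would set up the invariant region. Let $\Delta:=\{|z|\ge R_0\}$ where $R_0=\tfrac12 m|b|$. The two estimates above show: (i) $f_i(\Cb)\subset\Delta$ for points in any bounded set, more usefully (ii) $f_i(\Delta)\subset\Delta$, because for $|z|\ge R_0$ we get $|f_i(z)|\ge |P_i(z)|-M'|b| \ge \tfrac12|z|^d - M'|b| \ge |z|\ge R_0$ provided $R_0$ is large, which again holds for $|b|>A$ with $A$ depending only on $d, M, M'$. Hence $f=f_N\circ\cdots\circ f_1$ maps $\Delta$ into $\Delta$, and moreover any point of $\Cb$ lands in $\Delta$ after one application of $f_1$ and stays there; in particular the Julia set / all periodic points of $f$ lie in $\Delta$ (alternatively, one checks directly that the filled Julia set is contained in $\Delta$, or just that $f$ is hyperbolic by the expansion estimate below and there is nothing to check on a non-invariant part).

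Then comes the expansion estimate, which is the heart of the matter. On $\Delta$ we have $|f_i'(z)|=|P_i'(z)|\ge \tfrac{d}{2}|z|^{d-1}\ge \tfrac d2 R_0^{d-1}$ for $|b|$ large (the lower order terms of $P_i'$ being bounded by $M$ uniformly on $L$). Since $d\ge 2$, $R_0^{d-1}\ge R_0=\tfrac12 m|b|\to\infty$, so $|f_i'(z)|\ge \tfrac d2 R_0^{d-1}\ge C$ for every $i$ once $|b|>A$ with $A$ chosen (depending on $C$, $d$, $m$, $M$) large enough. By the chain rule, for any $z$ in the (forward-invariant, compact-in-$\Pb^1$-after-one-step) Julia set, $|(f^k)'(z)|=\prod_{j} |f_i'(\cdot)|\ge C^{k}$ along the whole orbit, because every iterate of $f$ is a composition of the $f_i$'s evaluated at points of $\Delta$. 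This is exactly hyperbolicity with expansion constant $>C$ (one can phrase it via the standard fact that uniform expansion of the derivative along the Julia set, with respect to a smooth metric, is equivalent to hyperbolicity; here the Euclidean metric already works since everything happens in $\Delta\subset\Cb$).

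The main obstacle — really the only delicate point — is bookkeeping the quantifiers correctly: the constant $A$ must be chosen uniformly over \emph{all} $N\ge1$ and all choices of $\alpha_i\in K$, $P_i\in L$. This works because the region $\Delta$ and the per-factor estimates $f_i(\Delta)\subset\Delta$ and $|f_i'|_{\Delta}>C$ depend only on $K$, $L$, $C$, $d$ and the single number $|b|$, not on $N$ or on which particular $\alpha_i, P_i$ are used; so once each individual factor expands by $>C$ and preserves $\Delta$, the composition automatically expands by $>C^N>C$ on its Julia set regardless of length. I would also double-check the edge case $d=2$ (where $R_0^{d-1}=R_0$ is merely linear in $|b|$, still $\to\infty$, so fine) and note that "monic degree $d$" is what gives the clean leading term $z^d$ in $P_i$ and $dz^{d-1}$ in $P_i'$; compactness of $L$ controls the remaining coefficients uniformly.
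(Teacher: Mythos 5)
Your proposed invariant region is at the wrong scale, and the key claim that the Julia set lies in $\Delta=\{|z|\ge R_0\}$ with $R_0=\tfrac12 m|b|$ is false. Here is the problem concretely: take $N=1$, $P_1(z)=z^d$, so $f(z)=z^d-b\alpha_1$. Its fixed points are approximately the $d$-th roots of $b\alpha_1$, hence have modulus $\approx |b\alpha_1|^{1/d}\sim |b|^{1/d}$, which is much smaller than $R_0\sim |b|$. More fundamentally, you establish two things on $\Delta$: $f_i(\Delta)\subset\Delta$ and $|f_i(z)|\ge |z|$ (with strict growth), so every orbit starting in $\Delta$ escapes to infinity — $\Delta$ is contained in the basin of $\infty$, hence in the Fatou set, and the Julia set is disjoint from it. The sentence ``any point of $\Cb$ lands in $\Delta$ after one application of $f_1$ and stays there'' cannot be salvaged: it would mean $f$ has no bounded orbits, which is impossible for a polynomial. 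What actually happens is that your two estimates (``$|z|\le R$ implies $|f_i(z)|\ge R_0$'', with $R\sim |b|^{1/d}$, and ``$|z|\ge R_0$ implies $|f_i(z)|\ge |z|$'') leave a large gap $R<|z|<R_0$, and the entire Julia set sits in that gap, roughly at $|z|\sim|b|^{1/d}$.

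The paper's proof works at the correct scale. It introduces radii $R_i=\epsilon|b\alpha_i|$ (still $\sim|b|$, but used for the \emph{target} discs, not the region containing the Julia set) and shows that $f_i^{-1}(D(0,R_i))$ consists of $d$ small discs of radius $\sim|b\alpha_i|^{1/d}$ centered near the $d$-th roots of $b\alpha_i$; these preimage discs lie inside $D(0,R_{i-1})$ but outside $D(0,(1-\epsilon')|b\alpha_i|^{1/d})$, and on that annulus $|P_i'|\gtrsim d|b\alpha_i|^{(d-1)/d}$ is large. The Julia set is then the nested intersection of preimages (a Cantor set), and the expansion estimate is checked precisely where the Julia set actually lives, which changes with $i$. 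Your single invariant region cannot capture this: the set of bounded orbits is not forward-invariant under the individual $f_i$ (only under the full composition $f$), so a per-factor invariant region argument must track how the relevant region moves with the index $i$. To repair your approach you would need to replace $\Delta$ by something like the $i$-dependent annulus $(1-\epsilon')|b\alpha_i|^{1/d}\le |z|\le 2|b\alpha_i|^{1/d}$, prove $f_i$ maps a disc containing the $(i-1)$-th annulus onto a region containing the $i$-th annulus via $d$ expanding inverse branches, and deduce hyperbolicity from the resulting Cantor structure — which is essentially the paper's argument.
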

\begin{proof}
	The proof simply consists to observe that under these assumptions, there exists large radii $R_0,R_1,\ldots,R_N$ such that $R_0=R_N$ and $f_i^{-1}(D(0,R_i))$ has $d$ relatively small connected components contained in $D(0,R_{i-1})$, for $i\in\{1,\ldots,N\}$. In this way, $f$ has a Cantor Julia set and is hyperbolic with a large expanding constant. And the construction of $R_0,\ldots,R_N$ simply comes from the fact that each $P_i$ looks like $z\mapsto z^d$ near $\infty.$
	
	In order to find these radii, let $d\geq2$ and let $\epsilon>0$ be small enough (depending on $d$) such that the set $\{z\in\Cb\,;\, |z^d-1|\leq2\epsilon\}$ is contained in the union of $d$ disjoint discs centered at the $d$-th roots of unity and of radius $\epsilon',$ for some $0<\epsilon'<1$.
	
	Observe that if $c\in\Cb^*$ and $z\in\Cb$ then $|z^d-c|\leq2\epsilon|c|$ is equivalent to $|1-z^d/c|\leq2\epsilon$ which implies that there exist a $d$-root $\xi$ of $1$ and a $d$-root $c^{1/d}$ of $c$ with $|\xi-z/c^{1/d}|\leq\epsilon'$, i.e. $|\xi c^{1/d}-z|\leq\epsilon'|c|^{1/d}.$ Moreover, Rouch\'e's theorem gives that if $h$ is a holomorphic function with $|h(z)|<\epsilon$ on $\partial D(\xi,\epsilon')$ then the image of $D(\xi,\epsilon')$ by $z\mapsto z^d+h(z)$ contains $D(1,\epsilon).$ Hence, there exists $A_0>0$ such that if $|c|>A_0$ and $P\in L$ then the preimage of $D(0,\epsilon|c|)$ by the map $w\mapsto P(w)-c$ is contained in the union of the $d$ discs $D(c^{1/d}\xi, \epsilon'|c|^{1/d}).$ In particular, it is contained in $D(0,2|c|^{1/d})$. Here, we applied the above argument to $h(z)=\sum_{i=0}^{d-1}a_ic^{(i-d)/d}z^i$ where $P(w)=w^d+\sum_{i=0}^{d-1}a_iw^i.$

	Now, let $\alpha_1,\ldots,\alpha_N\in K,$ $P_1,\ldots,P_N\in L$ and $b$ be in $\Cb^*.$ Define, for $i\in\{1,\ldots,N\}$, $R_i:=\epsilon|b\alpha_i|$ and $f_i\colon z\mapsto P_i(z)-b\alpha_i.$ In order to have $f_i^{-1}(D(0,R_i))\subset D(0,R_{i-1})$ it is sufficient to have $|b\alpha_i|>A_0$ and $2|\epsilon b\alpha_i|^{1/d}<\epsilon|b\alpha_{i-1}|$ which holds if
	$$|b|>\frac{2^{d/(d-1)}}{\epsilon}\left(\frac{\max_{1\leq i\leq N}|\alpha_i|}{\min_{1\leq i\leq N}|\alpha_i|^d}\right)^{1/(d-1)}.$$
	Hence, if $m=\min\{|z|\,;\,z\in K\}$ and $M=\max\{|z|\,;\,z\in K\},$ we can take $A:=\frac{4}{\epsilon}\left(\frac{M}{\min K^d}\right)^{1/(d-1)}.$
	A final observation is that $f^{-1}_i(D(0,R_i))$ is contained in $\Cb\setminus D(0,(1-\epsilon')|b\alpha_i|^{1/d})$ where the derivative of $f_i$ is $P_i'$ which becomes arbitrarily large, uniformly on $P_i\in L,$ if the constant $A$ is large enough.
\end{proof}

\subsection{Main steps of the proof: construction of the loops}

In the following proposition, we work in the space of skew-products and use Lemma \ref{le-hyp} in order to construct a target permutation, while avoiding the undesired ones.
\begin{proposition}\label{prop-facile}
	Let $p_{\max}$ and $p$ be two integers such that $1\leq p\leq p_{\max}.$ Let $\mathbf z_1$ and $\mathbf z_1'$ be two periodic points of $F_0$ of type $(1,\ldots,1,p)$ and of the form $\mathbf z_1=(0,\ldots,0,x_n)$ and $\mathbf z_1'=(0,\ldots,0,x_n').$ 
	
	If $\bfz_1$ and $\bfz_1'$ belong to different cycles, then there exists a loop in $\mathcal P_d^{n}(p_{\max})$ which exchanges $ \mathbf z_1$ and $ \mathbf z_1'$ while leaving all the other cycles of period smaller than or equal to $p_{\max}$ unchanged.
	
	If $\bfz_1$ and $\bfz_1'$ belong to the same cycle, then there exists a loop in $\mathcal P_d^{n}(p_{\max})$ which cyclically permutes this cycle, sending $ \mathbf z_1$ to $ \mathbf z_1'$ while leaving all the other cycles of period smaller than or equal to $p_{\max}$ unchanged.
	
\end{proposition}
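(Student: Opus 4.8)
The plan is to reduce the statement to the one-dimensional result of Theorem~\ref{th-dim1} by working inside the family of skew-products over the last coordinate, while using Lemma~\ref{le-hyp} to guarantee that all the ``fibered'' dynamics stays hyperbolic (in fact, in the shift locus), so that no unwanted periodic points of period $\le p_{\max}$ are created and no unwanted permutations occur. Concretely, I would consider loops of the form
$$
F_t(z_1,\ldots,z_n) = \bigl(f_{1,t}(z_1,z_n),\ldots,f_{n-1,t}(z_{n-1},z_n),\, g_t(z_n)\bigr),
$$
where the base map $g_t$ is a loop in $\mathcal P_d^1(p_{\max})$ realizing, via Theorem~\ref{th-dim1}, the desired permutation of $x_n$ and $x_n'$ (either a transposition of two $p$-cycles, or a cyclic permutation of a single $p$-cycle sending $x_n$ to $x_n'$), and where the fiber maps $f_{i,t}$ are chosen to start and end at $z_i\mapsto z_i^d$ but to be deformed in the middle of the loop so that, over every periodic point of $g_t$ of period $\le p_{\max}$, the relevant composition along the cycle is hyperbolic with no periodic points of period $\le p_{\max}$ except those forced by the structure. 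The natural choice is $f_{i,t}(z_i,z_n) = z_i^d - b(t)\,\varphi_i(z_n)$ for a suitable scalar function $b(t)$ that vanishes at the endpoints of the loop and is large in modulus in the interior, and suitable nonvanishing $\varphi_i$; Lemma~\ref{le-hyp} applied along each base cycle then yields the hyperbolicity with a large expanding constant, uniformly.

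The key steps, in order, are: (1) choose the base loop $g_t$ in $\mathcal P_d^1(p_{\max})$ from Theorem~\ref{th-dim1} realizing the prescribed permutation on the periodic points of $z\mapsto z^d$ of period $\le p_{\max}$, and note that the set of these periodic points (finite) varies continuously and returns to itself; (2) along this loop, track the finitely many base-periodic points $w(t)$ of period $q\le p_{\max}$, and for each such cycle note that the return map in the fibers is a composition $f_{n-1,t}(\cdot, g_t^{q-1}(w))\circ\cdots$ — but since our $f_{i,t}$ only depend on the fiber coordinate through the base point, at a base-periodic point this is exactly a composition of maps of the form $P(z)-b\alpha$ with $P$ monic of degree $d$ and $\alpha\ne 0$; (3) invoke Lemma~\ref{le-hyp} to pick $A$ and then $b(t)$ with $|b(t)|>A$ throughout the interior of the loop so that every such fiber-composition is hyperbolic with expanding constant $>1$, hence its only periodic points are repelling and there are no parabolic cycles, so $F_t\in\mathcal P_d^n(p_{\max})$ for all $t$; (4) deduce that over the interior of the loop the only periodic points of $F_t$ of period $\le p_{\max}$ lie over base-periodic points, and in each fiber the fiber-return map has a Cantor Julia set, so its periodic points can be continued without collision and are permuted only according to the monodromy in the fibered shift locus, which is trivial because the shift locus is connected and the loop is contractible there; (5) conclude that the monodromy of $F_t$ on periodic points of period $\le p_{\max}$ is exactly the permutation induced by $g_t$ on the base coordinate, i.e. it exchanges $\mathbf z_1$ and $\mathbf z_1'$ (resp. cyclically permutes the cycle) and fixes everything else; (6) finally, deform the endpoints: since at $t=0,1$ we have $b=0$ and $F_0$ is the power map, the loop is genuinely closed in $\mathcal P_d^n(p_{\max})$.

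I expect the main obstacle to be step (4): controlling \emph{all} periodic points of period $\le p_{\max}$ of the skew-product $F_t$, not merely those over base-periodic points. One must argue that a periodic point of $F_t$ of period $m\le p_{\max}$ projects to a periodic point of $g_t$ of period $q\mid m$ (so $q\le p_{\max}$), and then that over such a base cycle the fiber dynamics, being a hyperbolic polynomial composition with Cantor Julia set by Lemma~\ref{le-hyp}, has its periodic points in bijection (respecting period and combinatorics) with those of the corresponding power-map composition, with no parabolic ones and no collisions as $t$ varies — this is where the uniform expanding constant $>1$ from Lemma~\ref{le-hyp} is essential, since it forbids eigenvalue $1$ and gives a holomorphic motion of the repelling cycles. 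A secondary subtlety is the degenerate fiber over the base-periodic point $0$ (the original location of $\mathbf z_1$): there the fiber map at $t=0$ is literally $z\mapsto z^d$, whose fixed point $0$ is super-attracting, so one checks that the deformation $z^d-b\alpha$ with $\alpha\ne0$ immediately moves this into the shift locus and keeps the relevant periodic point well-defined throughout, which is precisely why the hypothesis places $\mathbf z_1,\mathbf z_1'$ at $(0,\ldots,0,x_n)$ and why we use $\varphi_i$ nonvanishing. The rest — continuity of the periodic-point sets, closedness of the loop, and the bookkeeping of which cycles are touched — is routine given Theorem~\ref{th-dim1} and Lemma~\ref{le-hyp}.
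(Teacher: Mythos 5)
Your proposal has the roles of base and fiber backwards, and this is a fatal flaw, not a technical one. You build a skew product with base coordinate $z_n$ and fibers $z_1,\ldots,z_{n-1}$, and you put the one-dimensional loop $g_t$ realizing the transposition $x_n\leftrightarrow x_n'$ on the base. But the base of a skew product evolves autonomously, so the monodromy in the $z_n$-coordinate of \emph{every} periodic point projecting to the $p$-cycle of $x_n$ (or $x_n'$) is governed by $g_t$. Concretely, take $\bfz_j=(\zeta,0,\ldots,0,x_n)$ where $\zeta$ is a nonzero $(d-1)$-th root of unity. This has period $p\le p_{\max}$, it lies in a cycle disjoint from those of $\bfz_1$ and $\bfz_1'$, yet under your loop its $z_n$-coordinate must be carried along the continuation of $x_n$ by $g_t$, landing in the $p$-cycle of $x_n'$. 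So $\bfz_j$ is moved off its own cycle, in direct contradiction to the statement being proved. No choice of $b(t)$ or $\varphi_i$ in the fiber maps can repair this, because the fiber dynamics never influences the base.

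The paper's construction orients the skew product the other way: the first $n-1$ coordinates are the base and remain the power map, and the last coordinate $z_n$ is the fiber, with the fiber map $z_n\mapsto z_n^d + c(t) + b_1 h(z_1,\ldots,z_{n-1})$. Because $h$ is linear and $h(0)=0$, the fiber over the base point $(0,\ldots,0)$ sees exactly the one-dimensional loop $z_n\mapsto z_n^d+c(t)$ from Theorem~\ref{th-dim1}, which effects the desired transposition of $x_n$ and $x_n'$; over any base point $\pi(\bfz_j)\ne 0$, the constant $b_1h(\pi(\bfz_j))$ is large and Lemma~\ref{le-hyp} puts the $\tilde p$-fold return composition in the hyperbolic regime for every $t$, so the loop $t\mapsto c(t)$, being contractible in $D(0,R)$, produces no fiber monodromy there. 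This is precisely the localization you need and your construction lacks.

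Two secondary points. First, your contractibility argument ``the shift locus is connected so the loop is contractible there'' is not valid as stated: connectedness does not imply simple connectivity, and in any case your loop visits $b(t)=0$ at the endpoints, where the fiber composition is not in the shift locus. The paper instead argues that $c(t)$ is contractible in a disc $D(0,R)$ on which the composition is \emph{uniformly} hyperbolic by Lemma~\ref{le-hyp}, so the contraction can be carried out entirely inside the space of hyperbolic compositions. Second, there is a slip in your description of the degenerate fiber: the ``problematic'' point $\bfz_1=(0,\ldots,0,x_n)$ has base coordinate $x_n$ and fiber coordinate $(0,\ldots,0)$ in your setup, not base coordinate $0$; the confusion is a symptom of the base/fiber inversion above.
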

\begin{proof}
	Let $\{\mathbf z_2,\ldots, \mathbf z_N\}\subset\Cb^n$ be the set of periodic points of period $\leq p_{\max}$ which are not in the same cycle as $\mathbf z_1$ nor $\mathbf z_1'.$ For each $j\in\{2,\ldots,N\},$ write $\mathbf z_j=(z_{j,1},\ldots,z_{j,n}).$
	Let $\tilde p$ denote the least common multiple of the periods of $\mathbf z_1,\ldots,\mathbf z_N.$ By Theorem \ref{th-dim1}, there exists a loop in $\mathcal P_d^{1}(\tilde p)$ of the form $f_t(x)=x^d+c(t),$ $t\in[0,1]$ with $c(0)=c(1)=0$, which exchanges the $p$-periodic points $x_n$ and $x_n',$ if they belong to different cycles or sends $x_n$ to $x_n'$ if they belong to the same cycle. Furthermore, the loop can be chosen so that all the periodic points of period smaller or equal to $\tilde p$ that are not in the same cycle as $x_n$ or $x_n'$, stay unchanged. 
	
	Observe that the loop 
	$$
	[0,1]\ni t\mapsto F_{\gamma_0(t)}(u_1,\ldots,u_k)=(u_1^d,\ldots,u_{k-1}^d,f_t(u_k))
	$$
	exchanges $\bfz_1$ and $\bfz_1'$ but will also act non-trivially on some other periodic orbits of periods less than or equal to $p_{\max}$. In order to avoid this, we will use Lemma \ref{le-hyp} and work in the space of skew-products.
	
	Choose $R>0$ large enough so that $c([0,1])\subset D(0,R).$ Let $\pi\colon\Cb^n\to\Cb^{n-1}$ be the projection $\pi(u_1,\ldots,u_n)=(u_1,\ldots,u_{n-1})$. By construction, if
	$$J_0:=\{2\leq j\leq N\,;\, \pi(\mathbf z_j)=0\}$$
	then the loop $F_{\gamma_0(t)}$ acts trivially on $\mathbf z_j$ with $j\in J_0.$ Let $J_1:=\{2,\ldots,N\}\setminus J_0.$ As $\Cb$ is an infinite filed, there exists a linear form $h\colon\Cb^{n-1}\to\Cb$ such that $h(\pi(\mathbf z_j))\neq0$ if $j\in J_1.$ It also follows from the definition of the set $J_0$ that $h(\pi(\mathbf z_j))=0$, for all $j\in J_0$. By Lemma \ref{le-hyp} applied to $K:=\{h(\pi(\mathbf z_j))\, ;\, j\in J_1\}$ and $L:=\{z^d+a\,;\, |a|\leq R\}$, if the modulus of $b_1\in\Cb$ is large enough then for all $t\in[0,1]$ and $j\in J_1,$ we have that
	$$f_{t,j}:=f_{t,j,\tilde p-1}\circ\cdots\circ f_{t,j,0}\ \text{ where }\ f_{t,j,i}(z):=z^d+c(t)+b_1h(\pi\circ F_0^i(\mathbf z_j))$$
	is hyperbolic. Moreover, as the loop $t\mapsto c(t)$ is contractible in $D(0,R),$ the loop $t\mapsto f_{t,j}$ is contractible in the space of hyperbolic polynomials and hence, does not permute any periodic points of $f_{0,j} = f_{1,j}$, for $j\in J_1$.
	
	Now we define a loop
	$$
	F_{\gamma_1(t)}(u_1,\ldots,u_n):=(u_1^d,\ldots,u_{n-1}^d,u_n^d+c(t)+b_1h(u_1,\ldots,u_{n-1})).
	$$
	Note that $F_{\gamma_1(0)} = F_{\gamma_1(1)}\neq F_0$, but for each $j\in J_1$, and any $z\in\bbC$, we have 
	$$
	F_{\gamma_1(t)}^{\tilde p}(z_{j,1},\ldots,z_{j,n-1}, z)=(z_{j,1},\ldots,z_{j,n-1},f_{t,j}(z)).
	$$
	Hence, by the previous discussion, this loop acts trivially on the periodic points of $F_{\gamma_1(0)}$ of the form $(z_{j,1},\ldots,z_{j,n-1},z).$ Furthermore, if $\delta_1$ is a path in $\mathcal P_d^{n}(p_{\max})$ between $F_0$ and $F_{\gamma_1(0)}$ of the form $(u_1,\ldots,u_n)\mapsto(u_1^d,\ldots,u_{n-1}^d,u_n^d+tb_1h(u_1,\ldots,u_{n-1}))$, $t\in[0,1]$, then the analytic continuation of a periodic point $(u_1,\ldots,u_n)$ has the form $(u_1,\ldots,u_{n-1},z)$ for some $z\in\Cb$, and in particular, the points $\mathbf z_1,$ $\mathbf z_1'$ and $\mathbf z_j$ with $j\in J_0$ stay unchanged as we move along the path $\delta_1$. Thus, the concatenation of $\delta_1$ with $\gamma_1$ and $\delta_1^{-1}$ gives a loop in $\mathcal P_d^{n}(p_{\max})$ which exchanges $\mathbf z_1$ and $\mathbf z_1'$, leaving unchanged all $\mathbf z_j$ with $j\in J_0\cup J_1.$ This concludes the proof.
\end{proof}
\begin{remark}\label{rk-Pk}
	An important observation is that all the deformations in $\mathcal P_d^{n}(p_{\max})$ we used in the proof above act trivially on the hyperplane at infinity when seen as endomorphisms of $\Pb^n$. This will also hold for all the other results in $\mathcal P_d^{n}$ below. Given that, it will be easy to extend the irreducibility results to the space $\mathrm{End}_d(\Pb^n)$ instead of $\mathcal P_d^n$.
\end{remark}

In the next two propositions, we restrict ourselves to $n=2.$ First, we use Lemma \ref{le-hyp} to change the types of orbits.

\begin{proposition}\label{prop-change-type}
	Let $p\geq1$ and $p_{\max}\geq p.$ Let $\bfz_1$ and $\bfz_2$ be two $p$-periodic points of $F_0$ of periodic type $(p_1,q_1)$ and $(p_2,q_2)$ respectively. Then there exists a loop in $\mathcal P_d^{2}(p_{\max})$ such that the analytic continuations $\check \bfz_1=(\check x_1,\check y_1)$ and $\check \bfz_2=(\check x_2,\check y_2)$ of $\bfz_1$ and $\bfz_2$ respectively are both of type $(1,p),$ with either $\check x_1=\check x_2$ or $\check y_1=\check y_2.$
\end{proposition}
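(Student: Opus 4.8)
Since a concatenation of loops based at $F_0$ is again such a loop, and since the statement permits the loop to act arbitrarily on cycles other than those of $\bfz_1,\bfz_2$, it suffices to produce, for a single periodic point $\bfw=(\xi,\eta)$ of $F_0$ of type $(a,b)$ (so $\operatorname{lcm}(a,b)=p$), a loop in $\mathcal P_d^2(p_{\max})$ whose monodromy carries $\bfw$ to a point of type $(1,p)$ with first coordinate $1$. Applying this first to $\bfz_1$, and then to the new position of $\bfz_2$ --- the second loop arranged, via the hyperbolization of Lemma~\ref{le-hyp} and using (as in Proposition~\ref{prop-facile}) that distinct cycles of $F_0$ can be separated by a linear form, so as to fix the orbit of the point already normalized --- then yields $\check x_1=\check x_2=1$. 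Two elementary moves are available essentially for free: a path $L_t$ in $\mathrm{GL}_2(\bbC)$ from the identity to the coordinate flip, conjugated with $F_0$, gives a loop $F_t=L_t^{-1}F_0L_t$ whose monodromy swaps the two coordinates of every periodic point, i.e.\ exchanges type $(a,b)$ with type $(b,a)$; and any loop in $\mathcal P_d^1(p_{\max})$ acting on one coordinate, lifted as a product map, realizes (by Theorem~\ref{th-dim1}) every dynamics-commuting permutation of the periodic points of $z\mapsto z^d$ in that coordinate without touching the other.

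\textbf{The type-changing move.} The plan is to redistribute the period between the two coordinates in two stages, each carried out inside a subfamily admitting an invariant fibration, so that Theorem~\ref{th-dim1} can be applied along the fibers. In a skew product $G(x,y)=(x^d,g(x,y))$ the continuation of $\bfw$ keeps first coordinate $\xi$ of period $a$, while its second coordinate is governed over the $a$-cycle $\mathcal O=\{\xi,\xi^d,\dots,\xi^{d^{a-1}}\}$ by the first-return map $\mathcal G_g=g(\xi^{d^{a-1}},\cdot)\circ\cdots\circ g(\xi,\cdot)$, of degree $d^a$ and equal to $z\mapsto z^{d^a}$ at the base point. The Jacobian of $G^k$ along any $k$-cycle is triangular with one eigenvalue $d^k\bigl(\prod_i x_i\bigr)^{d-1}$, of modulus $\ge d^k\ge 2$ and hence never $1$, so $G\in\mathcal P_d^2(p_{\max})$ as soon as the fiber maps over all first-coordinate cycles of period $\le p_{\max}$ avoid parabolic cycles; by Lemma~\ref{le-hyp} we take $g$ large over every such fiber except $\mathcal O$, making those fiber maps hyperbolic (thus in the shift locus) and harmless. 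In Stage~1 one then runs a loop in the remaining freedom of $g$ along $\mathcal O$ continuing $\eta$ --- a periodic point of $\mathcal G_g$ of period $m:=b/\gcd(a,b)$ --- to another period-$m$ point of $z\mapsto z^{d^a}$ whose period for $z\mapsto z^d$ equals $am=\operatorname{lcm}(a,b)=p$; this brings $\bfw$ to type $(a,p)$. In Stage~2 one does the symmetric thing in a skew product $G(x,y)=(g(x,y),y^d)$, whose continuation keeps the second coordinate of period $p$: the first coordinate, which has $z\mapsto z^d$-period $a$ dividing $p$ and is hence a fixed point of the degree-$d^p$ first-return composition over the $p$-cycle of the second coordinate, is moved by a loop of that composition to the fixed point $1$. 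This produces type $(1,p)$ with first coordinate $1$.

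\textbf{Main obstacle.} The crux --- precisely the ``restrictions imposed by the fibration'' mentioned in the introduction --- is that the fiber first-return maps above are \emph{compositions} of degree-$d$ polynomials rather than single unicritical polynomials; in particular the degree constraint on $g$ prevents one, as soon as $a$ (respectively $p$) exceeds $d+1$, from realizing $\mathcal G_g$ as the full unicritical family $z\mapsto z^{d^a}+c$, to which Theorem~\ref{th-dim1} would apply verbatim. One must therefore show that the required type-changing permutations (moving a periodic point of $z\mapsto z^{d^a}$, or a fixed point of $z\mapsto z^{d^p}$, to another one of the same combinatorial class but of smaller $z\mapsto z^d$-period) are nonetheless realizable inside the relevant subfamily of compositions. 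The intended mechanism is to break the period redistribution into small steps --- each changing a coordinate's period by a factor $\le d$, keeping the degrees of the fiber maps one must control bounded --- and, within each step, to move the points of the fiber orbit one at a time, routing the loops through the shift locus (where, by Lemma~\ref{le-hyp}, monodromy is trivial) so that the desired permutation is produced while the undesired ones are excluded. The remaining bookkeeping --- staying in $\mathcal P_d^2(p_{\max})$ along the whole path, and making the loop for $\bfz_2$ leave the orbit of $\check{\bfz}_1$ untouched --- is handled throughout by this same hyperbolization.
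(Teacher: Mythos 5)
You have correctly identified the main obstacle, but you do not resolve it, and the fallback you propose (``break the period redistribution into small steps'') does not close the gap. The difficulty you name --- that in a skew product $(x,y)\mapsto(x^d,g(x,y))$ the first-return map $\mathcal G_g$ over an $a$-cycle of $x\mapsto x^d$ is a composition of $a$ degree-$d$ polynomials whose components are tied together by the degree constraint on $g$, so that for $a$ large one cannot realize the whole unicritical family $z\mapsto z^{d^a}+c$ --- persists unchanged at every step of any such subdivision: the base cycle still has period $a$, the return map is still a non-unicritical composition, and Theorem~\ref{th-dim1} (which you invoke to produce the type-changing permutation) applies only to the unicritical family, not to families of compositions. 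Nothing in the proposal explains how the needed monodromy is realized inside the family of compositions one actually has access to, and routing through the shift locus controls the \emph{other} cycles but does not manufacture the desired permutation of the marked one. So, as written, your Stages~1 and~2 cannot be completed.

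The paper resolves exactly this point by \emph{degenerating the base cycle}, not by subdividing the move. Replace the first coordinate map $x\mapsto x^d$ by $f_\epsilon(w)=\kappa\,w(w^{d-1}-1)$ with $\kappa=C\epsilon^{-1}$, which has a $p_1$-cycle $(w_1,\ldots,w_{p_1})$ with $|w_i|<\epsilon$ for $i<p_1$ and $|1-w_{p_1}|<\epsilon$, and take the skew product $G_{c,\epsilon}(x,y)=(f_\epsilon(x),\,y^d+xc)$. The second coordinate of $G_{c,\epsilon}^{p_1}(w_1,y)$ is the composition $y\mapsto(\cdots((y^d+w_1c)^d+w_2c)^d+\cdots)^d+w_{p_1}c$; as $\epsilon\to 0$ the inner additions $w_ic$ vanish and only the last, $w_{p_1}c\to c$, survives, so this composition converges on compacts to the genuine unicritical polynomial $y\mapsto y^{d^{p_1}}+c$. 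For $\epsilon$ small enough the loop from Theorem~\ref{th-dim1} in the degree-$d^{p_1}$ unicritical family therefore lifts to a loop $t\mapsto G_{\gamma(t),\epsilon}$ realizing the type change $(p_1,q_1)\to(p_1,p)$, and a second pass with the roles of $x$ and $y$ swapped gives $(1,p)$; a final use of the fact that the same $f_\epsilon$ simultaneously has the fixed point $0$ and a $p_2$-cycle near $(0,\ldots,0,1)$ lets one treat $\bfz_2$ while keeping $\check\bfz_1$ fixed (the paper arranges $\check y_1=\check y_2$ rather than your $\check x_1=\check x_2=1$, but either matches the statement). Your reductions, elementary moves, and the use of Lemma~\ref{le-hyp} to isolate the marked orbits are all consistent with the paper; the missing ingredient is this degeneration of the base cycle.
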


Note that in Proposition~\ref{prop-change-type} we do not necessarily require that the points $\bfz_1$ and $\bfz_2$ belong to distinct cycles.

\begin{proof}[Proof of Proposition~\ref{prop-change-type}]
	Let $\bfz_i$ be of type $(p_i,q_i)$ for $i\in\{1,2\}$ like in the statement. Let $\epsilon>0$ be an arbitrarily small constant which will be fixed later. Let $f_\epsilon\colon\bbC\to\bbC$ be a degree $d$ polynomial having a period $p_1$-cycle $(w_1,\ldots,w_{p_1})$ such that $|w_i|<\epsilon$ for $i\in\{1,\ldots,p_1-1\}$ and $|1-w_{p_1}|<\epsilon.$ One can take $f_\epsilon(w)=aw(w^{d-1}-1)$ with $a=C\epsilon^{-1}$ where $C>1$ depends on $d.$ For $c\in\Cb,$ define
	$$G_{c,\epsilon}(x,y)=(f_\epsilon(x),y^d+xc).$$
	
	Note that when $c=0$, the map $G_{0,\epsilon}$ belongs to the product space $(\mathcal P_n^1)^2\subset \mathcal P_n^2$, so, using Theorem~\ref{th-dim1}, we can find a path $\delta$ from $F_0$ to $G_{0,\epsilon}$ in the product space $(\mathcal P_n^1)^2$ such that the analytic continuation of $\bfz_1$ along $\delta$ is $(w_1,y_1),$ where $y_1$ is a periodic point of $y\mapsto y^d$ of period~$q_1$. Observe that $y_1$ is also an $r_1$-periodic point for the map $y\mapsto y^{d^{p_1}}$, where $r_1$ satisfies $p_1r_1=p$. Using the monodromy in the non-parabolic unicritical degree $d^{p_1}$ polynomials, there exists a loop $\gamma$ in that space such that the $r_1$-periodic point $y_1$ for $y\mapsto y^{d^{p_1}}$ is exchanged with a $p$-periodic point $\hat y_1$ of $y\mapsto y^d$ (which also has period $r_1$ for $y\mapsto y^{d^{p_1}}$). Since, when $\epsilon$ is small the second coordinate of $G^{p_1}_{c,\epsilon}(w_1,y)$ is arbitrarily close to $y^{d^{p_1}}+c,$ the analytic continuation of $(w_1,y_1)$ along the loop $t\mapsto G_{\gamma(t),\epsilon}$ is $(w_1,\hat y_1)$ when $\epsilon>0$ is small enough. In particular, the periodic point $(w_1,\hat y_1)$ has type $(p_1,p).$ Coming back to $F_0$ through $\delta^{-1}$, we obtain that the analytic continuation of $\bfz_1$ via this loop is some periodic point  $\hat{\bfz}_1:=(x_1,\hat y_1)$ of type $(p_1,p).$

	The same construction allows us to pass from $\hat \bfz_1$ to $\check \bfz_1:=(\check x_1,\hat y_1)$ of type $(1,p)$. More specifically, we exchange the roles of $x$ and $y$ in the previous argument, and we replace $p_1$ by $p.$ The only other difference is that when we use the monodromy starting at the map $x\mapsto x^{d^p},$ the point $x_1$ is fixed for this map and we exchange it with a fixed point of $x\mapsto x^d.$
	
	So far, we have not considered the other periodic points, especially $\bfz_2.$ Let $\hat{\bfz}_2=(\hat x_2,\hat y_2)$ denote the analytic continuation of $\bfz_2$ along the loop exchanging $\bfz_1$ with $\check \bfz_1.$ We now have to exchange $\hat \bfz_2$ with a type $(1,p)$ periodic point of the form $(\check x_2,\hat y_1)$ or $(\check x_1,\check y_2),$ leaving $\check \bfz_1$ unchanged. Observe that if $\hat x_2=\check x_1$ then $(p_2,q_2)=(1,p)$ and $\hat \bfz_2$ already has the desired form. Hence, we can assume that $\hat x_2\neq\check x_1.$ (The latter will in particular imply that the points $\hat{\bfz}_2$ and $\check\bfz_1$ belong to different cycles.) In the same spirit, if $p_2=1,$ we can assume that $\hat y_2\neq\hat y_1,$ otherwise, there is nothing to prove. On the other hand, the map $f_\epsilon$ we choose above has a fixed point at $0$ and a $p_2$-cycle $(w_1',\ldots,w_{p_2}')$ such that $|w_i'|<\epsilon$ for $i\in\{1,\ldots,p_2-1\}$ and $|1-w_{p_2}'|<\epsilon.$ Using once again Theorem \ref{th-dim1} and the monodromy in $\mathcal P_d^{1}(p_{\max})$, there is a path from $z\mapsto z^d$ to $f_\epsilon$ such that $\check x_1$ becomes $0$ and $\hat x_2$ becomes $w_1'.$ From that, using as above a loop of the form $G_{\eta(t),\epsilon},$ we can exchange $(w_1',\hat y_2)$ with $(w_1',\hat y_1).$ The latter point is of type $(p_2,p)$ with the second coordinate matching the one of $\check\bfz_1$. Note that all the periodic points of the form $(0,y)$, in particular $(0,\hat y_1)$, remained unchanged by the monodromy along the loop $G_{\eta(t),\epsilon}$.
	
	For the last step, since the second coordinate of the two periodic points are the same, we can exchange the first coordinate of the second point so that it becomes of type $(1,p),$ leaving the first periodic point unchanged.
\end{proof}

\begin{proposition}\label{prop-same-type}
	Let $p_{\max}$ and $p$ be two integers such that $1\leq p\leq p_{\max}.$ Let $\bfz_1=(x_1,y_1)$ and $\bfz_1'=(x_1',y_1')$ be two periodic points of $F_0$ of type $(1,p)$ with $x_1=x_1'$ or $y_1=y_1'.$ 

	If $\bfz_1$ and $\bfz_1'$ belong to different cycles, then there exists a loop in $\mathcal P_d^{2}(p_{\max})$ which exchanges $ \mathbf z_1$ and $ \mathbf z_1'$ while leaving all the other cycles of period smaller than or equal to $p_{\max}$ unchanged.

	If $\bfz_1$ and $\bfz_1'$ belong to the same cycle, then there exists a loop in $\mathcal P_d^{2}(p_{\max})$ which cyclically permutes this cycle, sending $ \mathbf z_1$ to $ \mathbf z_1'$ while leaving all the other cycles of period smaller than or equal to $p_{\max}$ unchanged.

\end{proposition}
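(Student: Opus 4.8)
The plan is to split according to the two alternatives in the hypothesis and, in the harder case, to imitate the mechanism of Proposition~\ref{prop-change-type} while keeping the remaining cycles under control via Lemma~\ref{le-hyp}. A preliminary remark: if $y_1=y_1'$ and $x_1\neq x_1'$, then $\bfz_1$ and $\bfz_1'$ automatically lie in distinct cycles, because $x_1$ is fixed by $z\mapsto z^d$, so the cycle of $\bfz_1=(x_1,y_1)$ is $\{(x_1,y_1^{d^i})\}_{i\ge 0}$, which contains $(x_1',y_1)$ only if $x_1=x_1'$. Hence the ``same cycle'' alternative occurs only in the case $x_1=x_1'$.

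\emph{Case $x_1=x_1'=:a$.} Here $a$ is a fixed point of $z\mapsto z^d$, and one can run verbatim the proof of Proposition~\ref{prop-facile} with $n=2$, the only change being that the linear form $h$ there is replaced by the affine form $h(x)=x-a$. Thus Theorem~\ref{th-dim1} gives a one–variable loop $t\mapsto y^d+c(t)$ with $c([0,1])\subset D(0,R)$ realizing the required (cyclic) permutation of the $F_0$-orbit(s) through $y_1,y_1'$ and fixing every other $y$-periodic point of period $\le p_{\max}$; one then forms the loop $F_{\gamma_1(t)}(x,y)=\bigl(x^d,\ y^d+c(t)+b_1(x-a)\bigr)$ in $\mathcal P_d^2(p_{\max})$, with $b_1$ a large fixed constant. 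Over $\{x=a\}$ the second coordinate acts by $y\mapsto y^d+c(t)$, performing the exchange (or cyclic permutation) of $\bfz_1,\bfz_1'$ and fixing all points $(a,y)$; for any periodic point $(z_{j,1},z_{j,2})$ with $z_{j,1}\neq a$ the whole orbit of $z_{j,1}$ avoids $a$ (as $a$ is fixed), so $x-a$ stays in $\bbC^*$ along it and Lemma~\ref{le-hyp} makes the induced one–variable composition hyperbolic; since $c(\cdot)$ is contractible in $D(0,R)$, the loop then acts trivially on that orbit. Concatenating with the path $(x,y)\mapsto(x^d,\,y^d+tb_1(x-a))$ and its reverse — which fix every periodic point with first coordinate $a$ — produces the loop, and all maps involved are regular since their leading part is the power map.

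\emph{Case $y_1=y_1'=:b$, $x_1\neq x_1'$.} One cannot simply reduce to the previous case: applying Proposition~\ref{prop-change-type} to $\bfz_1,\bfz_1'$ keeps their common coordinate in the second slot, so a genuinely two–dimensional loop is needed. I would use a skew product over the $y$-dynamics, performing a degree $d^p$ monodromy on the first coordinate along the $p$-cycle of $b$, exactly as the first–coordinate manoeuvre in the proof of Proposition~\ref{prop-change-type}: first a product loop $(x^d,g_t(y))$ obtained from Theorem~\ref{th-dim1} sends $b$ to the initial point $w_1$ of a $p$-cycle $(w_1,\dots,w_p)$ of the degree $d$ polynomial $g_\epsilon$ which is $\epsilon$-close to $(0,\dots,0,1)$; then one takes the skew products $G_c(x,y)=\bigl(x^d+cy+b_1\,y(y-1),\ g_\epsilon(y)\bigr)$ with $b_1$ a large fixed constant and $\epsilon$ chosen small afterwards (note $y(y-1)$ has degree $2\le d$, so $G_c$ is regular; a coupling vanishing along the entire $p$-cycle is not available, as it would need degree $\ge p$). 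Along the $w_1$-cycle both $cy$ and $b_1y(y-1)$ are $O(\epsilon)$, so $G_c^{\,p}$ acts on the $x$-line by a map $C^0$-close to $x\mapsto x^{d^p}+c$ and equal to $x\mapsto x^{d^p}$ at $c=0$; by Theorem~\ref{th-dim1} in degree $d^p$ one picks a loop $c(t)$ confined to a small disk around the parameter where the continuations of $x_1$ and $x_1'$ collide, transposing $x_1,x_1'$ and fixing the remaining periodic points up to the relevant period. Over any other periodic point $v$ of $g_\epsilon$ of period $\le p_{\max}$ lying away from $\{0,1\}$, the term $b_1v(v-1)$ is large, so Lemma~\ref{le-hyp} makes the $x$-line composition hyperbolic and the contractible loop $c(t)$ fixes the corresponding column; the finitely many periodic points of $g_\epsilon$ approaching $\{0,1\}$ as $\epsilon\to0$ are treated directly, using that there the $x$-line dynamics is $x\mapsto x^d+(\text{const})+O(\epsilon)$ and that the relevant rescaled loop avoids the finitely many parabolic parameters of the degree $d$ family. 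Conjugating $t\mapsto G_{c(t)}$ by the product loop and by a short connecting path gives the loop in $\mathcal P_d^2(p_{\max})$ exchanging $\bfz_1,\bfz_1'$ and fixing all other cycles of period $\le p_{\max}$.

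The step I expect to be the main obstacle is the control of these ``parasitic'' columns in the case $y_1=y_1'$: one must coordinate the choices of $g_\epsilon$, of $b_1$, of $\epsilon$, and of the degree $d^p$ monodromy loop so that Lemma~\ref{le-hyp} — supplemented by the contractibility argument for the columns above the points of $g_\epsilon$ near $\{0,1\}$ — leaves every cycle of period $\le p_{\max}$ unchanged except the two that must be exchanged; everything else reduces to the one–variable inputs of Theorem~\ref{th-dim1} and the hyperbolicity criterion of Lemma~\ref{le-hyp}, as in the preceding propositions. As noted in Remark~\ref{rk-Pk}, all the deformations used act trivially at infinity, which is what will allow the same argument to run in $\mathrm{End}_d(\Pb^n)$.
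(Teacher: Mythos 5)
Your preliminary remark and the case $x_1=x_1'$ are essentially the paper's argument: observe the same-cycle alternative forces $x_1=x_1'$, reduce by Theorem~\ref{th-dim1} and apply Proposition~\ref{prop-facile} (you fold the normalisation into the affine form $h(x)=x-a$, which is a harmless rewording). The difficulty is entirely in the case $y_1=y_1'$, $x_1\neq x_1'$, and there your construction genuinely diverges from the paper's and has real gaps.

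Your choice is to re-use the $g_\epsilon$ from Proposition~\ref{prop-change-type}, whose relevant $p$-cycle $(w_1,\ldots,w_p)$ sits near $(0,\ldots,0,1)$, to couple with $cy+b_1y(y-1)$, and to run the monodromy in a degree $d^p$ family. Several things go wrong. First, the claim that $cy$ is $O(\epsilon)$ along the cycle is false at the step $y=w_p\approx 1$ (which is precisely how the constant $c$ enters); the statement is self-contradictory, since if $cy$ were uniformly $O(\epsilon)$ the return map would be close to $x\mapsto x^{d^p}$, not $x\mapsto x^{d^p}+c$. Second, and more seriously, the monodromy in the degree $d^p$ family is not under control: you assert a loop ``confined to a small disk around the parameter where the continuations of $x_1$ and $x_1'$ collide'' performs the transposition, but Theorem~\ref{th-dim1} only guarantees that \emph{some} loop in the nonparabolic locus realises the transposition, not that the two specific $(d-1)$-st roots of unity coalesce at a single branch point that you can encircle with a small loop; without a small loop you lose the $O(\epsilon)$-perturbation argument. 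Third, the coupling $b_1y(y-1)$ vanishes on the whole set $\{0,1\}$, yet $g_\epsilon$ (which has coefficients of size $\epsilon^{-1}$, so does not converge as $\epsilon\to 0$) has many periodic points accumulating on $\{0,1\}$; the columns over those points are exactly where Lemma~\ref{le-hyp} fails, and ``treated directly'' is not an argument — this is where the real work lies. The paper avoids all of this by a different mechanism: it takes the Douady–Sentenac (\emph{tour de valse}) family $g_\epsilon$ in which the $p$-cycle coalesces into the fixed point $0$ as $\epsilon\to 0$, couples with the linear term $\alpha y$ (vanishing only at $y=0$), and does the entire monodromy at $\epsilon=0$, where the $x$-return map over $y=0$ is \emph{exactly} the degree $d$ family $x\mapsto x^d+c$. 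That puts Theorem~\ref{th-dim1} in its natural degree $d$ setting, the only parasitic column near $y=0$ is the single fixed point $(x_1,0)$ (resp.\ $(x_1',0)$), whose unwanted swap is then cancelled by one extra application of Proposition~\ref{prop-facile}, and the remaining columns are killed by Lemma~\ref{le-hyp} because $g_0$ is nonparabolic away from $0$. Your proposal misses this degeneration-to-a-fixed-point device, and without it the control over the extra columns and over the degree $d^p$ loop does not go through.
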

\begin{proof}
	Observe first that if the points $\bfz_1$ and $\bfz_1'$ belong to the same cycle, then $x_1=x_1'$. In general, the case $x_1=x_1'$ follows from Proposition \ref{prop-facile}: using Theorem~\ref{th-dim1} we can assume that $x_1=x_1'=0$ and then apply Proposition~\ref{prop-facile} with $n=2.$ Thus, for the rest of the proof we assume that the points $\bfz_1$ and $\bfz_1'$ belong to distinct cycles, $y_1=y_1'$ and $p\geq2$ since when $p=1$ the situation is similar to the case $x_1=x_1'.$
	
	As in the proof of Proposition \ref{prop-facile}, let $\{\bfz_2,\ldots, \bfz_N\}\subset\Cb^2$ be the set of periodic points of period $\leq p_{\max}$ which are not in the same cycles as $\bfz_1$ and $\bfz_1'.$
	
	It is classical fact (see the ``tour de valse'' \cite{douady-sentenac} and the proof of \cite[Corollary 4.11]{dujardin-bif} for precise details) that there exists a continuous family of maps $(g_\epsilon)_{\epsilon\in [0,\epsilon_0]}$ contained in $\mathcal P_d^1$ and parameterized by a small interval $[0,\epsilon_0]$ such that for each $\epsilon\in [0,\epsilon_0],$ the point $0$ is a fixed point of $g_\epsilon$ and if $\epsilon\neq0$ then $g_\epsilon$ has a $p$-cycle $(w_1(\epsilon),\ldots,w_p(\epsilon))$ whose points are contained in the disc $D(0,p\epsilon)$ and depend continuously on $\epsilon.$ Moreover, we can assume that $g_\epsilon\in\mathcal P_d^{1}(p_{\max})$ if $\epsilon\neq0.$ Observe that a priori, this family cannot be extended to the one parametrized holomorphically by $D(0,\epsilon_0)$ with the same property since $\epsilon\mapsto w_1(\epsilon)$ could be multivalued.

	For $c,\alpha\in\Cb$ and $\epsilon\in[0,\epsilon_0],$ we defined
	\begin{equation}\label{eq-g}
	G_{c,\alpha,\epsilon}(x,y):=(x^d+\alpha y+c,g_\epsilon(y)).
	\end{equation}
	By Theorem \ref{th-dim1}, there exists a path in $\mathcal P_d^1$ form $x\mapsto x^d$ to $g_{\epsilon_0},$ sending $y_1$ to $w_1(\epsilon_0).$ Let us denote by $\gamma$ the corresponding path in the product space $(\mathcal P_d^1)^2\subset\mathcal P_d^2$ between $F_0$ and $G_{0,0,\epsilon_0}$ such that the first coordinate remains unchanged along the path. For $i\in\{1,\ldots,N\},$ let $\hat \bfz_i=(x_i,\hat y_i)$ and $\hat \bfz_1'=(x_1',\hat y_1)$ be the analytic continuations along the path $\gamma$ of the points $\bfz_i$ and $\bfz_1'$ respectively. There exist natural analytic continuations $\hat \bfz_i(\epsilon)=(x_i,\hat y_i(\epsilon))$ and $\hat \bfz_1'(\epsilon)=(x_1',\hat y_1(\epsilon))$  of $\hat \bfz_i$ and $\hat \bfz_1'$ respectively, for $\epsilon\in[0,\epsilon_0]$ with $\hat \bfz_i(\epsilon_0)=\hat \bfz_i$ and $\hat \bfz_1'(\epsilon_0)=\hat \bfz_1'$. Among these periodic points, we isolate those with one ``bad'' coordinate by setting $\{2,\ldots,N\}=I_0\sqcup(I_1\cup I_2)$ where $i\in I_1$ iff $x_i=x_1$ or $x_i=x_1'$ and $i\in I_2$ iff $\hat y_i(0)=0.$ Observe that $i\in I_1\cap I_2$ implies that $\hat \bfz_i(\epsilon_0)$ is a fixed point equal to $(x_1,0)$ or $(x_1',0)$ since the orbits of the original periodic points $\bfz_1,\bfz_1'$ are disjoint from the points $\bfz_2,\ldots,\bfz_N$.
	
	For $i\not\in I_2$, let $n_i$ be the period of the periodic point $\bfz_i$ and let $f_{c,\alpha,i}\colon\bbC\to\bbC$ be the family of maps, parameterized by $c$ and $\alpha$, such that
	$$
	G_{c,\alpha,0}^{n_i}(x,\hat y_i(0))=(f_{c,\alpha,i}(x),\hat y_i(0)),\qquad\text{for all }x\in\bbC.
	$$
	Using Theorem \ref{th-dim1} in the space of non-parabolic unicritical degree $d$ polynomials $f_c$, there is a loop $c\colon[0,1]\to\Cb$ which exchanges $x_1$ and $x_1',$ leaving invariant all $x_i$ with $i\notin I_1.$ On the other hand, there exists $r>0$ such that $|g_0^j(\hat y_i(0))|\geq r$ if $j\geq0$ and $i\notin I_2.$ Hence, using Lemma \ref{le-hyp} in the same way as in the proof of Proposition \ref{prop-facile}, there exists $\alpha>0$ such that 
	for all $i\notin I_2,$ the loop 
	$$
	[0,1]\ni t\mapsto f_{c(t),\alpha, i} 
	$$
	is a contractible loop in the space of hyperbolic polynomials.
	Thus, the loop $t\mapsto G_{c(t),\alpha,0}$ (conjugated by a path from $G_{0,0,0}$ to $G_{0,\alpha,0}$ in product maps) acts trivially on the periodic points $\hat \bfz_i(0)$ if $i\notin I_2.$ This remains true under small perturbations so there exists $\epsilon\in(0,\epsilon_0)$ such that $t\mapsto G_{c(t),\alpha,\epsilon}$ is a loop along which the periodic points corresponding to $\hat \bfz_1(\epsilon)$ and $\hat \bfz_1'(\epsilon)$ are exchanged and those corresponding to $\hat \bfz_i(\epsilon),$ $i\notin I_2,$ stay unchanged. 
	
	Finally, we observe that the remaining case $i\in I_1\cap I_2,$ corresponds to the two fixed points $(x_1,0)$ and $(x_1',0)$ of $G_{0,0,\epsilon}$, and these two fixed points are swapped by the constructed loop. However, using Proposition \ref{prop-facile} it is easy to cancel this undesired permutation (see the beginning of the present proof about the case $y_1=y_1'$ and $p=1$).
	\end{proof}

We can now prove Theorem \ref{main_theorem_2} when $n=2.$ 
\begin{theorem}\label{th-irr2}
	For all $N\geq1$, $d\geq2$ and $\mathbf p=(p_1,\ldots,p_N)\in\Zb_{>0}^N$, the action by monodromy of $X^2_{d,\mathbf p}\to\mathcal P_d^2$ on its fibers is transitive. In particular, $X^2_{d,\mathbf p}$ is irreducible.
\end{theorem}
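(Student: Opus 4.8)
The plan is to pin down the monodromy group of the ramified cover $\pi\colon X^2_{d,\mathbf p}\to\mathcal P_d^2$ on the fibre over the power map $F_0$, and then deduce transitivity and irreducibility from it. Put $p_{\max}:=\max_i p_i$. Since $F_0\in\mathcal P_d^2(p_{\max})$ and all cycles of $F_0$ of period $\le p_{\max}$ are non-parabolic, $F_0$ lies in the base of the genuine unramified cover obtained by restricting $\pi$ over $\mathcal P_d^2(p_{\max})$; as this base is a connected Zariski-open subset of the irreducible variety $\mathcal P_d^2$, it suffices to prove that the monodromy acts transitively on $\pi^{-1}(F_0)$, and irreducibility of $X^2_{d,\mathbf p}$ will then follow, $\tilde X^2_{d,\mathbf p}$ being dense in it. (One may assume $\pi^{-1}(F_0)\neq\emptyset$, since otherwise $X^2_{d,\mathbf p}=\emptyset$, the map $F_0$ realising the generic number of orbits of each period.)

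First I would record the combinatorial picture. Let $G$ be the group of all permutations of the set $S$ of periodic points of $F_0$ of period $\le p_{\max}$ that commute with $F_0$. Analytic continuation along any loop in $\mathcal P_d^2(p_{\max})$ preserves every dynamical incidence, so the monodromy group is contained in $G$, acting diagonally on $N$-tuples; conversely, matching up orbits of each period shows that $G$ itself already acts transitively on $\pi^{-1}(F_0)$. Hence the whole statement reduces to proving that the monodromy group equals $G$. As $G$ is generated by the cyclic rotations of individual $F_0$-orbits of period $\le p_{\max}$, together with the dynamics-respecting transpositions of pairs of distinct $F_0$-orbits of a common period $\le p_{\max}$, it is enough to realise each of these generators by a loop in $\mathcal P_d^2(p_{\max})$.

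The heart of the argument is a conjugation trick combining Propositions~\ref{prop-change-type} and~\ref{prop-same-type}. Fix a generator, involving a periodic point $\mathbf z_1$ of period $p$ and a second point $\mathbf z_1'$ — taken in a different orbit for a transposition, and equal to $F_0(\mathbf z_1)$ for a rotation. Proposition~\ref{prop-change-type} provides a loop $\rho$ whose monodromy $\mu_\rho\in G$ carries $\mathbf z_1,\mathbf z_1'$ to points $\check{\mathbf z}_1,\check{\mathbf z}_1'$ of type $(1,p)$ sharing a coordinate; because $\mu_\rho$ commutes with $F_0$ it maps orbits to orbits, so $\check{\mathbf z}_1$ and $\check{\mathbf z}_1'$ lie in distinct cycles precisely when $\mathbf z_1,\mathbf z_1'$ do, and $\check{\mathbf z}_1'=F_0(\check{\mathbf z}_1)$ in the rotation case. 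Then Proposition~\ref{prop-same-type} supplies a loop $\lambda$ whose monodromy $\nu_\lambda$ is exactly the clean transposition $\check{\mathbf z}_1\leftrightarrow\check{\mathbf z}_1'$ (respectively the rotation of the orbit of $\check{\mathbf z}_1$ sending it to $\check{\mathbf z}_1'$), fixing every other orbit of period $\le p_{\max}$ pointwise. The concatenated loop $\rho\ast\lambda\ast\rho^{-1}$ then has monodromy a conjugate of $\nu_\lambda$ by $\mu_\rho$, and one checks this conjugate is exactly the chosen generator: it sends $\mathbf z_1\mapsto\mathbf z_1'$ (and $\mathbf z_1'\mapsto\mathbf z_1$ for a transposition), while every orbit other than those of $\mathbf z_1,\mathbf z_1'$ is fixed, these being precisely the orbits not carried by $\mu_\rho$ onto the orbit of $\check{\mathbf z}_1$ or of $\check{\mathbf z}_1'$. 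This exhibits a generating set of $G$ inside the monodromy group, so the two coincide and the theorem follows.

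The substantive analytic and dynamical content is already isolated in Propositions~\ref{prop-facile},~\ref{prop-change-type} and~\ref{prop-same-type} (and, through them, in Lemma~\ref{le-hyp} and Theorem~\ref{th-dim1}), so the residual difficulty here is bookkeeping: checking that those two families of permutations do generate $G$, verifying that the auxiliary loop $\rho$ perturbs only orbits irrelevant to the generator being produced, and being careful about why transitivity of the monodromy on the single fibre $\pi^{-1}(F_0)$ upgrades to irreducibility of the Zariski closure $X^2_{d,\mathbf p}$. I expect the conjugation step of the third paragraph to be the only point requiring real attention.
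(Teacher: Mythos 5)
Your proposal is correct and follows essentially the same route as the paper: both reduce to implementing, for each choice of a marked point $\bfz_i$ and a substitute $\bfz_i'$ of the same period, a single ``clean'' permutation, and both do so via the conjugated loop $\gamma\delta\gamma^{-1}$ (your $\rho\ast\lambda\ast\rho^{-1}$) with $\gamma$ from Proposition~\ref{prop-change-type} and $\delta$ from Proposition~\ref{prop-same-type}. Your framing through the generating set of the full dynamics-commuting group $G$ is a slightly more structured way of saying the same thing, and the remaining details you flag (that $G$ acts transitively on the fiber, that the conjugate fixes exactly the orbits not moved by $\mu_\rho$ onto those of $\check\bfz_1,\check\bfz_1'$) are all sound.
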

\begin{proof}
	Let $p_{\max}=\max\{p_1,\ldots,p_N\}.$ Recall that for $n=2$, the map $F_0\colon\bbC^2\to\bbC^2$ is defined by $F_0\colon (x_1,x_2)\mapsto (x_1^d, x_2^d)$.	
	It is enough to prove that, if $(F_0,\bfz_1,\ldots,\bfz_N)\in X^2_{d,\mathbf p}$ and $\bfz'_i$ is another $p_i$-periodic point of $F_0$ then 
	either $\bfz_i$ and $\bfz_i'$ belong to the same cycle and there exists a loop in $\mathcal P_d^{2}(p_{\max})$ that cyclically permutes this cycle, sending $\bfz_i$ to $\bfz_i'$, or $\bfz_i$ and $\bfz_i'$ belong to distinct cycles and there exists a loop in $\mathcal P_d^{2}(p_{\max})$ which exchanges $\bfz_i$ and $\bfz_i'$. In both cases the loop should leave the points $\bfz_j$ with $j\neq i$ unchanged as long as $\bfz_j$ is not in the same orbit than $\bfz_i'$.

To prove this, consider the loop $\gamma$ given by Proposition \ref{prop-change-type} such the monodromy along $\gamma$ turns $\bfz_i$ (resp. $\bfz_i'$) into some points $\bfx_i$ (resp. $\bfx_i'$) that are both of type $(1,p).$ For $1\leq j\leq N$ with $j\neq i,$ let $\bfx_j$ be the analytic continuation of $\bfz_j$ along $\gamma.$ Let $\delta$ be the loop given by Proposition \ref{prop-same-type}. It exchanges $\bfx_i$ and $\bfx_i'$ if they belong to distinct cycles or otherwise, sends $\bfx_i$ to $\bfx_i'$ in both cases leaving other cycles unchanged. Then, the loop $\gamma\delta\gamma^{-1}$ provides the necessary permutation of the periodic points.
\end{proof}
Using Proposition \ref{prop-facile}, the result extends to all dimensions.

\begin{proof}[Proof of Theorem \ref{main_theorem_2} when $n\geq3$]
	Let $(F_0,\bfz_1,\ldots,\bfz_N)$ be in $X_{d,\mathbf p}^n.$ Let $\bfz_i$ be one of the points in this data and let $\bfz_i'$ be another $p_i$-periodic point of $F_0$ which is not in the same cycle than $\bfz_j$ for all $j\neq i.$ The goal here is to obtain a loop $\gamma$ in $\mathcal P_d^{n}(p_{\max})$ putting both points $\bfz_i$ and $\bfz_i'$ in the form required to apply Proposition \ref{prop-facile} and then use the path $\gamma\delta\gamma^{-1}$ where $\delta$ is given by Proposition \ref{prop-facile}. Observe that this strategy cannot work when $d=2$ and $p_i=2$, since under these conditions there is a only one $2$-cycle of the form $(0,\ldots,0,x_n).$ We will explain the case $p_i=2$ separately.

	Let us write $\bfz_i$ and $\bfz_i'$ in coordinates: $\bfz_i=(x_1,\ldots,x_n)$ and $\bfz_i'=(y_1,\ldots,y_n)$, and define $(q_1,\ldots,q_n)$ and $(r_1,\ldots,r_n)$ as the periodic types of, respectively, $\bfz_i$ and $\bfz_i'.$ If we assume that $p_i\neq2$ then $\max_{1\leq j\leq n} q_j\neq2$ and we can assume that this maximum is equal to $q_n.$ From that, $(x_1,x_n)$ is a periodic point in dimension $2$ of period $s,$ the least common multiple of $(q_1,q_n).$ In the same way, $(y_1,y_n)$ is a periodic point of period $t.$ By Theorem \ref{th-irr2}, there exists a loop in $\mathcal P_d^{2}(p_{\max})$ sending $(x_1,x_n)$ to $(0,x_n')$ and $(y_1,y_n)$ to $(0,y_n'),$ for some periodic points $x_n'$ and $y_n'$ of $z\mapsto z^d$ of periods $s$ and $t$ respectively. 	Lifting this loop to the space $\mathcal P_d^n(p_{\max})$, we obtain a loop that sends $\bfz_i=(x_1,\ldots,x_n)$ to $(0, x_2,\ldots,x_{n-1},x_n')$ and $\bfz_i'=(y_1,\ldots,y_n)$ to $(0, y_2,\ldots,y_{n-1},y_n')$. The proof is concluded by induction on the number of nonzero coordinates of the periodic points to obtain the points in the desired form for which Proposition \ref{prop-facile} can be applied.

	It remains to consider the case $p_i=2$. 
	First, observe that when $\bfz_i$ and $\bfz_i'$ are in the same cycle, we can follow the construction above. Thus, assume that $\bfz_i$ and $\bfz_i'$ are not in the same cycle. Then, there exist $1\leq a<b\leq n$ such that the points $(x_a,x_b)$ and $(y_a,y_b)$ are both of period $2$ but in different cycles of the two-dimensional map $(z_1,z_2)\mapsto (z_1^d,z_2^d)$. Without loss of generality we can assume that $(a,b)=(n-1,n).$ The result in dimension $2$ allows to assume that $(x_{n-1},x_n)$ and $(y_{n-1},y_n)$ are both of periodic type $(1,2)$ with $x_{n-1}=0$, $y_{n-1}=1$ and $x_n=y_n$. Hence, for all $j\in\{1,\ldots,n-2\}$ both $(x_j,x_n)$ and $(y_j,y_n)$ have period $2$ so, using again Theorem \ref{th-irr2}, we can assume that they also have periodic type $(1,2).$ Hence, for each $j\in\{1,\ldots,n-2\}$, the points $(x_j,x_{n-1})$ and $(y_j,y_{n-1})$ are distinct fixed points (since $x_{n-1}=0$ and $y_{n-1}=1$), so by the $2$-dimensional case we can assume that $x_j=y_j=0$ for all $j\in\{1,\ldots,n-2\}.$ Finally, $(x_{n-1},x_n)$ and $(y_{n-1},y_n)$ are in different $2$-cycles so we can assume that $x_n=y_n$ and $x_{n-1}=0,$ $y_{n-1}=1.$ Hence, it remains to exchange the periodic points of the form $\bfz_i=(0,\ldots,0,x_n)$ and $\bfz_i'=(0,\ldots,0,1,x_n)$ while leaving all other finitely many marked cycles unchanged. In order to do this, we cannot apply Proposition \ref{prop-facile} directly. However, we give an argument that combines ideas from Proposition \ref{prop-facile} and Proposition~\ref{prop-same-type}. Certain details are identical to the ones that appear in the proofs of the aforementioned propositions and therefore are described very briefly.
	
	Same as in the proof of Proposition~\ref{prop-same-type}, let $g_\epsilon\in\mathcal P_d^1(p_{\max})$ be a polynomial with a fixed point at zero and an almost parabolic $2$-cycle located $\epsilon$-close to zero. Let $g_0$ be the limiting map, for which $0$ is a parabolic fixed point with multiplier $-1$. Note that for any sufficiently small $\epsilon\ge 0$, the map $g_\epsilon$ has no other periodic points of period $\le p_{\max}$ in a fixed neighborhood of zero.
	
	Let
	$$
	[0,1]\ni t\mapsto c(t)\in\bbC\qquad\text{with } c(0)=c(1)=0
	$$
	be a loop such that the corresponding loop $[0,1]\ni t\mapsto w^d+c(t)$ in the space of one-dimensional polynomials exchanges the fixed points $0$ and $1$, leaving all other periodic points of period $\le p_{\max}$ unchanged.
	
	Consider a linear form $h(w_1,\ldots,w_{n-2}, w_n)$ that is not equal to zero for any input $(w_1,\ldots,w_{n-2}, w_n)$, such that at least one coordinate of $(w_1,\ldots,w_{n-2}, w_n)$ is not equal to zero, and there exists $w_{n-1}\in\bbC$, such that $(w_1,\ldots,w_{n-2}, w_{n-1}, w_n)$ is a periodic point for 
	$$
	(z_1,\ldots,z_n)\mapsto (z_1^d,\ldots,z_{n-1}^d, g_0(z_n))
	$$
	with period $\le 2p_{\max}$.
	
	Finally, we construct a family of loops $[0,1]\ni t\mapsto F_{t,\epsilon,\alpha}\in\mathcal P_d^n(p_{\max})$, parameterized by $\alpha$ and $\epsilon$, where
	$$
	F_{t,\epsilon,\alpha} (w_1,\ldots,w_n) = (w_1^d,\ldots,w_{n-2}^d,w_{n-1}^d+\alpha h(w_1,\ldots,w_{n-2}, w_n) +c(t), g_\epsilon(w_n)).
	$$
	A path $\gamma$ that connects $F_0$ with $F_{0,\epsilon,\alpha}$ can be chosen so that it takes the $2$-periodic points $(0,\ldots,0,x_n)$ and $(0,\ldots,1,x_n)$ to respective almost parabolic points $(0,\ldots,0,\hat x_n)$ and $(0,\ldots,1,\hat x_n)$. Finally, using Lemma~\ref{le-hyp} as in the proofs of Propositions~\ref{prop-facile} and~\ref{prop-same-type}, one can select a sufficiently small $\epsilon>0$ and a sufficiently large $\alpha>0$, such that the loop $t\mapsto F_{t,\epsilon,\alpha}$ exchanges the points $(0,\ldots,0,\hat x_n)$ and $(0,\ldots,1,\hat x_n)$ while leaving all other periodic points $(w_1,\ldots,w_n)$ of periods $\le p_{\max}$ unchanged, provided that at least one of the coordinates $w_1,\ldots,w_{n-2}$ is nonzero or $|w_n|>\epsilon$. Thus, the only ``undesired'' permutation, generated by this loop, will be the permutation of the fixed points $(0,\ldots,0,0)$ and $(0,\ldots,1,0)$. We can reverse this permutation by applying the first part of the proof (i.e., the case, when the period is not equal to $2$).
\end{proof}

\subsection{Extension of Theorem~\ref{main_theorem_2} to the space $\mathrm{End}_d(\Pb^n)$}\label{sec-irre-Pn}

Next, we extend in Theorem~\ref{th-pk} the result of Theorem~\ref{main_theorem_2} to the space $\mathrm{End}_d(\Pb^n)$ of degree $d$ endomorphims of $\Pb^n.$

Exactly as above, for any $n\geq2,$ $d\geq2,$ $N\geq1$ and $\mathbf p=(p_1,\ldots,p_N)\in\Zb_{>0}^N,$ we define $Y^n_{d,\mathbf p}$ as the closure of
$$
\tilde Y^n_{d,\mathbf p}=\left\{(f,\bfz_1,\ldots,\bfz_N)\in\mathrm{End}_d(\Pb^n)\times(\Pb^n)^N\,;\,\begin{array}{c}\bfz_i\text{ is a non-parabolic periodic}\\ \text{point of exact period}\
p_i\text{ of }f\text{ and no}\\ \text{two }\bfz_j\text{ are in the same orbit of }f\end{array}\right\}.
$$
\begin{theorem}\label{th-pk}
	For all $n, N\geq1$, $d\geq2$ and $\mathbf p=(p_1,\ldots,p_N)\in\Zb_{>0}^N$, the action by monodromy of $Y^n_{d,\mathbf p}\to\mathrm{End}_d(\Pb^n)$ on its fibers is transitive. In particular $Y^n_{d,\mathbf p}$ is irreducible.
\end{theorem}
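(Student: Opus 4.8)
The plan is to reduce the statement for $\mathrm{End}_d(\Pb^n)$ to the already-established result for $\mathcal P_d^n$ (Theorem~\ref{main_theorem_2}), exploiting the fact that $\mathcal P_d^n$ embeds naturally as a Zariski locally closed subset of $\mathrm{End}_d(\Pb^n)$: a regular polynomial endomorphism of $\Cb^n$ of degree $d$ is precisely an endomorphism of $\Pb^n$ of degree $d$ leaving the hyperplane at infinity $H_\infty=\Pb^n\setminus\Cb^n$ totally invariant. First I would observe, as already recorded in Remark~\ref{rk-Pk}, that every loop in $\mathcal P_d^n(p_{\max})$ constructed in the proof of Theorem~\ref{main_theorem_2} acts trivially on $H_\infty$ when viewed inside $\mathrm{End}_d(\Pb^n)$; in particular each such loop lies in the larger space $\mathrm{End}_d^n(\Pb^n)(p_{\max})$ of endomorphisms whose cycles of period $\le p_{\max}$ have no eigenvalue equal to $1$, and along it the marked (affine) periodic points of $F_0$ are followed without ever colliding with $H_\infty$. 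Hence the monodromy group of $Y^n_{d,\mathbf p}\to\mathrm{End}_d(\Pb^n)$, restricted to the fiber over $F_0$, contains the monodromy group of $X^n_{d,\mathbf p}\to\mathcal P_d^n$ over $F_0$.

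The second step is to enlarge this group enough to move any marked periodic orbit lying in $\Cb^n$ to one lying partly (or entirely) on $H_\infty$, and conversely. For this I would work in the space of endomorphisms of $\Pb^n$ that preserve the fibration of $\Pb^n$ over $\Pb^{n-1}$ given by projection from a point, as in Fakhruddin's argument~\cite{fakhruddin}: over $\Pb^1$ this fibration lets one treat the "last coordinate" as a rational map on $\Pb^1$ rather than a polynomial on $\Cb$, so that the one-dimensional monodromy result (now the rational version from~\cite{schleicher-galois}, or Fakhruddin's) permits moving a periodic point of the $\Pb^1$-factor across $\infty$. Concretely, starting from $F_0$ one deforms inside families of the skew-product type used in Propositions~\ref{prop-facile}, \ref{prop-change-type} and~\ref{prop-same-type}, but allowing the base and fiber maps to be endomorphisms of $\Pb^1$ rather than polynomials; Lemma~\ref{le-hyp} (applied on affine charts) still controls the "undesired" permutations exactly as before, since away from a fixed neighborhood of the relevant periodic points the fiber maps are hyperbolic with large expansion. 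In this way any periodic point of $F_0$ with some coordinates equal to $\infty$ can be connected by a loop to one entirely in $\Cb^n$, the loop leaving every other marked cycle of period $\le p_{\max}$ fixed.

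Combining the two steps: given $(F_0,\bfz_1,\ldots,\bfz_N)\in Y^n_{d,\mathbf p}$ and another $p_i$-periodic point $\bfz_i'$ of $F_0$ not sharing an orbit with any $\bfz_j$, $j\neq i$, one first applies loops of the second type to bring $\bfz_i$ and $\bfz_i'$ into $\Cb^n$ (recording the effect on the other marked points), then applies Theorem~\ref{main_theorem_2} in $\mathcal P_d^n$ to realize the desired interchange or cyclic permutation there, then conjugates back. Transitivity of the monodromy action on the fiber over $F_0$ follows, and since $F_0$ belongs to a Zariski open set over which the cover is unramified in the relevant sense, $Y^n_{d,\mathbf p}$ is irreducible. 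The main obstacle I expect is the second step: one must set up the $\Pb^1$-fibered families carefully so that pushing a periodic point across $H_\infty$ does not force collisions among the other marked orbits or create parabolic cycles of period $\le p_{\max}$ along the path — this is where the hyperbolicity estimate of Lemma~\ref{le-hyp}, together with a judicious choice of linear forms as in Proposition~\ref{prop-facile}, has to be re-run in the projective setting, and where the genuinely new bookkeeping compared to the affine case lies.
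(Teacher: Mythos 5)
Your first step (reduce to the case where both $\bfz_i$ and $\bfz_i'$ lie in $\Cb^n$, then invoke Theorem~\ref{main_theorem_2} via Remark~\ref{rk-Pk}) matches the paper's proof exactly. But for the second step — moving a marked periodic point from $H_\infty$ into $\Cb^n$ — the paper does something much simpler and entirely different from what you propose, and the step you sketch is where your plan is most in danger of failing.

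The paper's key observation is that $F_0=[x_0^d:\cdots:x_n^d]$ commutes with the coordinate transposition $\phi_1\colon[x_0:x_1:\cdots]\mapsto[x_1:x_0:\cdots]$, and that $\phi_1$ can be joined to the identity by a path $t\mapsto\phi_t$ inside $\mathrm{PGL}_{n+1}(\bbC)$ (avoiding the single degenerate parameter $t=1/2$). Then $F_t:=\phi_{\tilde\gamma(t)}\circ F_0\circ\phi_{\tilde\gamma(t)}^{-1}$ is a \emph{loop} based at $F_0$, and since every $F_t$ is holomorphically conjugate to $F_0$, no cycle ever acquires an eigenvalue $1$ and no collision of orbits can occur: the non-parabolicity and "no undesired permutations" constraints are satisfied for free. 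Along this loop, a point at infinity $[0:y_1':\cdots:y_n']$ is carried to $[y_1':0:y_2':\cdots:y_n']\in\Cb^n$, while $\bfz_i$ with all homogeneous coordinates of modulus $1$ also lands in $\Cb^n$. That is the entire content of the second step.

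Your proposal instead wants to work in families of $\Pb^1$-fibered endomorphisms and re-run Lemma~\ref{le-hyp} in the projective setting, à la Fakhruddin. This is the strategy the paper explicitly reports being \emph{unable} to adapt (``We were not able to adapt his proof neither to the case $N\geq2$, nor to the polynomial case, nor for eigendirections''). Two concrete difficulties: (a) Lemma~\ref{le-hyp} is a hyperbolicity estimate built on the fact that a monic polynomial of degree $d$ looks like $z\mapsto z^d$ near $\infty$; once you replace fiber maps by rational maps on $\Pb^1$ so as to push a periodic point across $\infty$, this normalization near $\infty$ disappears and the estimate does not transfer. (b) You still need the deformation to be a \emph{loop} at $F_0$ in $\mathrm{End}_d(\Pb^n)$, and controlling the endpoint of a skew-product deformation of rational type so that it returns exactly to $F_0$ while producing the desired nontrivial permutation is precisely the bookkeeping you flag as "the main obstacle" — and it is. The conjugation trick in the paper removes both issues at once by exploiting the symmetry group of $F_0$ inside $\mathrm{PGL}_{n+1}(\bbC)$, which your proposal does not exploit. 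As written, your second step is a genuine gap.
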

\begin{proof}
	Again, let $(F_0,\bfz_1,\ldots,\bfz_N)$ be in $Y_{d,\mathbf p}^n,$ $p_{\max}=\max\{p_1,\ldots,p_N\}$ and let $\bfz_i'$ be another $p_i$-periodic point of $F_0,$ that is not in the cycle of $\bfz_j$ when $j\neq i.$ If both $\bfz_i$ and $\bfz_i'$ are in $\Cb^n,$ then we can complete the proof by using the loop in $\mathcal P_d^{n}(p_{\max})$ given by Theorem~\ref{main_theorem_2}. For the general case, since the loop we consider in the proof of Theorem~\ref{main_theorem_2} acts trivially on periodic points at infinity (see Remark \ref{rk-Pk}), it is enough to find a loop $\gamma$ in the space of endomorphisms of $\Pb^n$ such that the analytic continuations of $\bfz_i$, $\bfz_i'$ and all other marked points $\bfz_j$ are well defined along $\gamma$, and in particular, analytic continuations of $\bfz_i$ and $\bfz_i'$ are both in $\Cb^n$. Then the loop $\gamma\delta\gamma^{-1}$ with the appropriate $\delta$ given by Theorem \ref{main_theorem_2}, provides the required permutation of the periodic points. Actually, using the monodromy in $\mathcal P_d^{n}(p_{\max}),$ it is enough to consider the case when $\bfz_i\in\Cb^n$ with only non-zero coordinates and $\bfz_i'$ is a point at infinity.
	
	Thus, if the hyperplane at infinity corresponds to $\{[x_0:\ldots:x_n]\in\Pb^n\,;\, x_0=0\}$ then we can assume that $\bfz_i=[y_0:\cdots:y_n]$ and $\bfz_i'=[y_0':\cdots:y_n']$ with $|y_j|=1$ for all $0\leq j\leq n$ and $y_0'=0.$ As $\bfz_i'$ has at least one non-zero coordinate, we can assume without loss of generality that $y_1'\neq0.$ For each $t\in\Cb\setminus\{1/2\},$ define $\phi_t\in\mathrm{Aut}(\Pb^n)$ by
	$$\phi_t[x_0:\cdots:x_n]=[(1-t)x_0+tx_1:tx_0+(1-t)x_1:x_2:\cdots:x_n].$$
	From this, if $\tilde\gamma$ is a path in $\Cb\setminus\{1/2\}$ between $0$ and $1$ then, using the fact that $F_0$ commutes with $\phi_1,$ we obtain that
	$$F_t:=\phi_{\tilde\gamma(t)}\circ F_0\circ\phi_{\tilde\gamma(t)}^{-1}$$
	defined a loop $\gamma$ in the space  of non-parabolic endomorphisms of $\Pb^n.$ And, it is easy to see that the analytic continuations of $\bfz_i$ and $\bfz_i'$ end at $[y_1:y_0:y_2:\cdots:y_n]$ and $[y'_1:y'_0:y'_2:\cdots:y'_n]$ respectively, thus, both are in $\Cb^n.$
\end{proof}

\subsection{Proof of Theorem~\ref{main_theorem_3}}

In the remaining part of this section we extend the result of Theorem~\ref{main_theorem_2} to give a proof of Theorem~\ref{main_theorem_3} in both cases, $\mathcal P_d^n$ and $\mathrm{End}_d(\Pb^n).$ Due to Theorem~\ref{main_theorem_2}, it remains to obtain permutations of the eigendirections. The idea is to find a loop around the locus where a specific periodic cycle has a differential with a Jordan block. Here again, the main difficulty is to accomplish this while leaving the eigendirections of any predetermined finite set of periodic cycles unchanged.

Given a positive integer $p_{\max}\ge 1$, consider a Zariski open subset $\tilde{\mathcal P}_d^n(p_{\max}) \subset \mathcal P_{d}^n(p_{\max})$  of $\mathcal P_{d}^n(p_{\max})$ that consists of all the maps whose cycles of period less than or equal to $p_{\max}$ all have distinct eigenvalues that are not equal to $1$. The conditions on the eigenvalues ensure that any eigendirection of any periodic point of period $\le p_{\max}$ can be analytically continued along any path in $\tilde{\mathcal P}_d^n(p_{\max})$. Note that the map $F_0$ that was used as a base point for the loops in the proof of Theorem~\ref{main_theorem_2}, is not contained in $\tilde{\mathcal P}_d^n(p_{\max})$. The latter is also a source of minor technical difficulties in the proof of Theorem~\ref{main_theorem_1} (see Section~\ref{sec_strategy}).

\begin{remark}\label{rk-jordan}
	Observe that, except in Theorem \ref{th-pk}, we had only used paths in skew-product maps (with $n-1$ independent coordinates). This can no longer be the case here because these maps have $n-1$ privileged eigendirections that stay unchanged along any loop in such skew products. Moreover, the loops in Theorem \ref{th-pk} are conjugated to loops used in Theorem \ref{main_theorem_2}. Thus, so far, all the paths we considered, whenever they lie in $\tilde{\mathcal P}_d^n(p_{\max})$, act trivially on the eigendirections.
\end{remark}

We first address the two-dimensional case. 
In what follows, we will consider loops 
of the form $[0,1]\ni t\mapsto F_{t,\epsilon,\alpha}$, defined by 
$$G_{t,\epsilon,\alpha}\colon (x,y)\mapsto (f(x)+\epsilon e^{2i\pi t}y,g(y)+\alpha x),$$
where $\epsilon>0$ is small and $\alpha>0$ is large. We will see in Proposition \ref{prop-dir} that if $0$ is a fixed point of $f$ with $\theta:=f'(0)$ non-zero and if $y_0$ is a $p$-periodic point of $g$ such that $(g^p)'(y_0)=\theta^p$ then the loop above exchanges the two eigendirections associated to the particular $p$-periodic point that is close to the periodic point $(0,y_0)$ of $(x,y)\mapsto (f(x),g(y)).$ 
To guarantee that the loop does not induce ``undesired'' exchanges of either periodic points or their eigendirections, we first carefully choose the map $g$, and then the constants $\alpha$ and $\epsilon$.

For $c\in\Cb,$ we consider the unicritical polynomial $g_c\colon z\mapsto z^d+c.$
Let $p$ be a positive integer and let $c_1\in\Cb$ be such that $0$ is a $p$-periodic point of $g_{c_1}.$ For $r>0$ small enough and $c\in D(c_1,r),$ $g_c$ has a $p$-periodic point $y(c)$ close to $0=y(c_1)$, depending holomorphically on $c,$ such that $\lambda(c):=(g_c^{p})'(y(c))\neq0$ if $c\neq c_1.$ For $c\in D(c_1,r)$ and $0\leq j\leq p-1$, define also $a_j(c):=g_{c}'(g_{c}^j(y(c))).$ Observe that, since the unique critical point of $g_c$ is $0,$ for $1\leq j\leq p-1,$ $a_j(c_1):=g_{c_1}'(g_{c_1}^j(0))\neq0.$ In particular, if $R>0$ is fixed then for $c$ sufficiently close to $c_1$, no $\theta\in\Cb$ satisfying $\theta^p=\lambda(c)$ is a root of a polynomial of the form
\begin{equation}\label{eq-poly}
X^{p-1}+\sum_{j=1}^{p-2}b_jX^j+\prod_{j=1}^{p-1}a_j(c),
\end{equation}
where $|b_j|<R$ for $1\leq j\leq p-2.$ In what follows, we fix such $c\neq c_1$ for
$$R=\sup\{1+|a_j(c')|\,;\, 0\leq j\leq p-1,\ c'\in D(c_1,r)\}^p.$$
From that we define $g:=g_c,$ $\lambda:=\lambda(c),$ $y_0:=y(c)$ and we choose $f\in\mathcal P_d^{1}$ without parabolic cycles and with a fixed point at $0$ and such that $\theta:=f'(0)$ satisfies $\theta^p=\lambda.$
\begin{proposition}\label{prop-dir}
	Let $p,$ $f$ and $g$ be as above and let $p_{\max}\geq p$ be an integer. If $\alpha>0$ is sufficiently large and $\epsilon>0$ sufficiently small, we have the following property. The loop $\gamma$ defined by
	$$G_t(x,y):=(f(x)+\epsilon e^{2i\pi t}y,g(y)+\alpha x)$$
	for $t\in[0,1]$ is in $\tilde{\mathcal P}_d^{2}(p_{\max}).$ Moreover, if $\bfz\in\Cb^2$ is a periodic point of $G_0$ of period at most $p_{\max}$ and $v\in\Pb^1$ is an eigendirection associated to $\bfz$ then
	\begin{itemize}
		\item the analytic continuation of $\bfz$ along $\gamma$ is $\bfz,$
		\item the analytic continuation of $v$ along $\gamma$ is $v$ unless $\bfz$ is a point in the cycle coming from the natural continuation of the cycle of the periodic point $(0,y_0)$ for $(x,y)\mapsto (f(x),g(y)+\alpha x).$ In this case, the action of $\gamma$ exchanges $v$ with the second eigendirection.
	\end{itemize}
\end{proposition}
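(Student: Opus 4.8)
The strategy is a perturbative/local one: analyze the loop $G_t$ near the degenerate map $(x,y)\mapsto(f(x),g(y)+\alpha x)$ for $\epsilon=0$, and show that the only periodic point whose associated linear data is affected by turning on the rotating off-diagonal term $\epsilon e^{2i\pi t}y$ is the cycle through $(0,y_0)$. I would proceed in three stages. First, the claim that $\gamma$ lies in $\tilde{\mathcal P}_d^2(p_{\max})$: since $f$ and $g$ were chosen without parabolic cycles of period $\le p_{\max}$, and since $G_t$ is an $O(\epsilon)$-perturbation of $H_\alpha\colon(x,y)\mapsto(f(x),g(y)+\alpha x)$ (which for suitable large $\alpha$ is still non-parabolic with distinct eigenvalues on cycles of period $\le p_{\max}$, using that $H_\alpha$ is triangular so its periodic eigenvalues are those of $f$ and of $g$, and Lemma~\ref{le-hyp}-type hyperbolicity control to handle the periodic points created or pushed around), one chooses $\epsilon$ small so that all finitely many relevant cycles persist, stay non-parabolic, and keep distinct eigenvalues; the key arithmetic input for ``distinct eigenvalues'' at the cycle of $(0,y_0)$ is precisely the choice of $c$ guaranteeing that no $\theta$ with $\theta^p=\lambda$ is a root of a polynomial of the form~\eqref{eq-poly}, which forces the two eigenvalues $\theta$ and $\prod a_j(c)/\theta\cdots$ along that $p$-cycle to be distinct for the perturbed map.

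Second, the triviality statements away from the special cycle. For $\epsilon=0$ the map $H_\alpha$ is lower-triangular in the $(x,y)$ order, so along any $p$-periodic orbit the differential $D H_\alpha^p$ is lower-triangular with diagonal entries $(f^p)'$-type and $(g^p)'$-type products; its two eigendirections are the ``vertical'' line $\{dx=0\}$ (when it is an eigendirection, i.e. generically) and a transverse one, and both vary holomorphically. Turning on $\epsilon$: for a periodic point $\bfz=(x_*,y_*)$ of $G_0$ that is \emph{not} in the orbit of the continuation of $(0,y_0)$, I claim the monodromy of the loop $t\mapsto\epsilon e^{2i\pi t}$ on the pair (periodic point, eigendirection) is trivial. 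For the periodic point itself this is immediate: $G_0=G_1$, so $\bfz$ returns to itself; the content is that there is no nontrivial permutation \emph{within} its cycle or with other cycles, which follows because the loop is contractible in the relevant space once we restrict to the region where Lemma~\ref{le-hyp} gives uniform hyperbolicity — this is the same device as in Propositions~\ref{prop-facile} and~\ref{prop-same-type}. For the eigendirection, one observes that for such $\bfz$ the two eigenvalues of $D G_0^{p}$ remain distinct and bounded away from each other uniformly in $t$ (again by the choice of $\epsilon$ small relative to the spectral gaps of all the finitely many unaffected cycles), so the eigenprojections depend holomorphically — hence single-valuedly — on $t\in\mathbb{R}/\mathbb{Z}$, giving trivial monodromy.

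Third, the nontrivial exchange at the cycle of $(0,y_0)$. Here the point is that for $H_\alpha$ this particular $p$-cycle has a differential whose two eigenvalues both equal (the two $p$-th roots giving) $\theta$ — more precisely, the diagonal of $DH_\alpha^p$ at this cycle has entries $\theta^p=\lambda$ from the $x$-direction and $\lambda$ from the $y$-direction, so the eigenvalues \emph{collide} at $\epsilon=0$ and the map sits on the discriminant locus. Turning on $\epsilon e^{2i\pi t}$ resolves the collision: the characteristic polynomial of $D G_t^p$ at the continued periodic point becomes, to leading order, $X^2 - (\text{trace})X + (\text{det})$ with the two roots given by $\lambda + c_\pm \sqrt{\epsilon e^{2i\pi t}}\,(1+o(1))$ for a nonzero constant, so the two eigenvalues — and with them the two eigendirections — are interchanged exactly once as $t$ runs over $[0,1]$, because $\sqrt{e^{2i\pi t}}$ is the standard square-root loop. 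Making this precise requires: (i) computing the $\epsilon$-expansion of the return map $G_t^p$ near $(0,y_0)$ and its differential, isolating the off-diagonal contribution to the discriminant (the $\epsilon$ term enters $DG_t^p$ in the $x$-from-$y$ slot after $p$ iterations with a nonzero coefficient, essentially $\prod a_j \cdot \epsilon e^{2i\pi t}$ plus the reciprocal slot being $O(\alpha)$, whose product is $O(\epsilon\alpha)$), and (ii) checking the eigendirections genuinely swap rather than returning — this is where the half-integer winding of the square root is used. I expect \textbf{this third stage — pinning down the local normal form of the differential of the $p$-th return map near the degenerate cycle and verifying that the discriminant vanishes to order exactly one in $\epsilon e^{2i\pi t}$ with the correct monodromy} — to be the main obstacle, since it requires an honest computation through $p$ iterations and care that the large parameter $\alpha$ does not interfere with the leading $\sqrt\epsilon$ behavior; the choice made before the proposition (fixing $c$ so that $\theta$ avoids roots of~\eqref{eq-poly}) is exactly what guarantees the relevant coefficient is nonzero and the order-one vanishing holds.
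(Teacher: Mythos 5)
Your plan is structurally the same as the paper's proof, and you have correctly identified the crux: the two eigenvalues at the cycle through $(0,y_0)$ coincide at $\epsilon=0$ and split like $\lambda\pm c\sqrt{\epsilon e^{2i\pi t}}$, so the loop interchanges them, while away from that cycle the spectral gap (controlled via Lemma~\ref{le-hyp}) makes the eigendirection monodromy trivial. However, the step you flag as ``the main obstacle'' is precisely where you have not done the work, and it is not as automatic as your sketch suggests. Your guess that the $\epsilon$-linear coefficient in the upper-right slot of $D G_t^p$ is ``essentially $\prod a_j\cdot\epsilon e^{2i\pi t}$'' is wrong: after composing $p$ triangular factors, both off-diagonal entries are genuine polynomials in $\theta$ and the $a_j$'s with $p$ terms each, namely (in the paper's notation)
\[
P=\theta^{p-1}+\sum_{j=1}^{p-2}b_j\theta^j+\prod_{j=1}^{p-1}a_j,\qquad
Q=\theta^{p-1}+\sum_{j=1}^{p-2}\tilde b_j\theta^j+\prod_{j=0}^{p-2}a_j,
\]
with $b_j,\tilde b_j$ products of the $a_i$'s, and the discriminant of the return map is $4\epsilon\alpha PQ+O(\epsilon^2)$. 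The non-vanishing of $PQ$ is not a consequence of a single product being nonzero; it requires the arithmetic input built into the choice of $c$ before the proposition — specifically, $P$ is literally of the form~\eqref{eq-poly}, and $\theta Q/a_0$ is also of that form, and the bound $R$ on the coefficients $b_j$ was chosen large enough to cover both. Without making this explicit, the claim that the discriminant vanishes to order exactly one in $\epsilon$ is unsupported.

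There is also a smaller imprecision in your Stage 1/2: the map $H_\alpha\colon(x,y)\mapsto(f(x),g(y)+\alpha x)$ is triangular, but its $p$-periodic eigenvalues are \emph{not} just those of $f$ and $g$. Only for cycles lying on $\{x=0\}$ does the $y$-dynamics reduce to iteration of $g$; for a cycle over a nonzero $l$-cycle $(\tilde x,f(\tilde x),\ldots)$ of $f$, the fiber dynamics is the non-autonomous composition $h=h_{l-1}\circ\cdots\circ h_0$ with $h_i(z)=g(z)+\alpha f^i(\tilde x)$. That Lemma~\ref{le-hyp} makes $h$ uniformly hyperbolic with an expanding constant exceeding $|(f^l)'(\tilde x)|+1$ is exactly what guarantees the spectral gap (hence trivial eigendirection monodromy) at all such cycles; and for cycles on $\{x=0\}$ with $\tilde y\notin\mathrm{orb}(y_0)$, one uses that $g$ is unicritical and $y_0$ lies on its unique non-repelling cycle. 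You gesture at this but the mechanism should be stated correctly, since it is what confines the monodromy to the single cycle through $(0,y_0)$.
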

\begin{proof}
	By Lemma \ref{le-hyp}, there exists $\alpha>0$ such that if $\tilde x\neq0$ is a periodic point of $f$ of period $l\leq p_{\max}$ then
	$$h:=h_{l-1}\circ\cdots\circ h_{0}\ \text{ where }\ h_{i}(z):=g(z)+\alpha f^i(\tilde x)$$
	is hyperbolic with an expanding constant larger than $|(f^l)'(\tilde x)|+1.$ Hence, since $f$ has no parabolic cycles, an $m$-periodic point $(\tilde x,\tilde y)$ of $H_0:=(x,y)\mapsto(f(x),g(y)+\alpha x)$ with $m\leq p_{\max}$ and $\tilde x\neq0$ can be followed with its eigendirections in a small neighborhood of $H_0.$ (Here, we use that $H_0^m(\tilde x,y)=(\tilde x,h^r(y))$ where $m=rl.$) The same statement also holds for $m$-periodic points of the form $(0,\tilde y)$, where $\tilde y$ is not in the orbit of $y_0$, since $g$ is unicritical and thus has a unique non-repelling cycle (the one that contains $y_0$).
	Thus, the only cycle which can be affected by a loop near $H_0$ is the cycle through the point $(0,y_0).$ As this cycle is attracting, it can also be followed locally so the only thing to prove is that the loop $\gamma$ in the statement swaps the eigendirections of this cycle.
	
	Recall that $\theta:=f'(0)$ verifies $\theta^p=\lambda:=(g^p)'(y_0).$ For $0\leq j\leq p-1$, define $a_j:=g'(g^j(y_0))$ in such a way that $\lambda=\prod_{j=0}^{p-1}a_j$ and
	$$D_{H_0^j(0,y_0)}H_0=\begin{pmatrix}\theta&0\\\alpha&a_j\end{pmatrix}\ \text{ and thus }D_{(0,y_0)}(H_0^p)=\begin{pmatrix}\lambda&0\\\alpha P&\lambda\end{pmatrix},$$
	where $P=\theta^{p-1}+\sum_{j=1}^{p-2}b_j\theta^j+\prod_{j=1}^{p-1}a_j$ with $b_j:=\prod_{i=j+1}^{p-1}a_i.$ Observe that the choice of $c$ to define $g(z):=z^d+c$ ensures that $\theta$ is not a root of a polynomial of the form \eqref{eq-poly}. Thus, $P\neq0.$
	
	Now, for $\epsilon\in\Cb$ define $H_\epsilon\colon(x,y)\mapsto(f(x)+\epsilon y,g(y)+\alpha x)$ and denote by $\bfz(\epsilon)=(x_\epsilon,y_\epsilon)$ the analytic continuation of $(0,y_0)$ as a $p$-periodic point, when $\epsilon$ is small. By the previous computation, there exist a constant $Q\in\Cb$ and holomorphic maps $\phi_i,$ $1\leq i\leq 4,$ such that
	$$D_{(x_\epsilon,y_\epsilon)}(H_\epsilon^p)=\begin{pmatrix}\lambda+\epsilon\phi_1(\epsilon)&\epsilon Q+\epsilon^2\phi_2(\epsilon)\\\alpha P+\epsilon\phi_3(\epsilon)&\lambda+\epsilon\phi_4(\epsilon)\end{pmatrix}.$$
	A simple computation gives that $Q=\theta^{p-1}+\sum_{j=1}^{p-2}\tilde b_j\theta^j+\prod_{j=0}^{p-2}a_j$ with $\tilde b_j:=\prod_{i=0}^{p-j-2}a_i.$ Given that $\theta^p=\lambda=\prod_{i=0}^{p-1}a_i,$ we have that
	$$\frac{\theta Q}{a_0}=\theta^{p-1}+\sum_{j=1}^{p-2}\tilde b'_j\theta^j+\prod_{j=1}^{p-1}a_j\ \text{ with } \ \tilde b'_j=\prod_{i=1}^{p-j-1}a_i.$$
	Thus, the fact that $\theta$ is not a root of a polynomial of the form \eqref{eq-poly} ensures again that $Q\neq0.$
	Furthermore, the two eigenvalues of $D_{(x_\epsilon,y_\epsilon)}(H_\epsilon^p)$ are
	$$\frac{2\lambda+\epsilon(\phi_1(\epsilon)+\phi_4(\epsilon))\pm\sqrt{4\epsilon\alpha PQ+\epsilon^2\phi_5(\epsilon)}}{2},$$
	where $\phi_5(\epsilon)=(\phi_1(\epsilon)-\phi_4(\epsilon))^2+4(Q\phi_3(\epsilon)+\alpha P\phi_2(\epsilon)+\epsilon\phi_2(\epsilon)\phi_3(\epsilon)).$ Since $PQ\neq0,$ if $\epsilon$ describes a loop of index $1$ around $0$ in $\Cb^*,$ close enough to $0,$ then the same holds for $4\epsilon\alpha PQ+\epsilon^2\phi_5(\epsilon).$ Hence, such a loop exchanges the two branches of the square root, i.e. it exchanges the two eigenvalues of $D_{(x_\epsilon,y_\epsilon)}(H_\epsilon^p)$ and thus the two eigendirections.
\end{proof}

\vspace{2ex}

Finally, we combine Proposition \ref{prop-dir} with Theorem \ref{main_theorem_2} and Theorem \ref{th-pk} to give a proof of Theorem~\ref{main_theorem_3}.

\begin{theorem}\label{th-eigen}
	For all $n, d\geq2$, $N\geq1$ and $\mathbf p=(p_1,\ldots,p_N)\in\Zb_{>0}^N$, the action by monodromy of $Z^n_{d,\mathbf p}\to\mathcal P_d^n$ on its fibers is transitive. In particular $Z^n_{d,\mathbf p}$ is irreducible. Moreover, the same holds for the corresponding space over $\mathrm{End}_d(\Pb^n).$
\end{theorem}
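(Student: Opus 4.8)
The plan is to upgrade the transitive monodromy furnished by Theorem~\ref{main_theorem_2} (and Theorem~\ref{th-pk} in the projective case) by adjoining loops that permute eigendirections, modelled on Proposition~\ref{prop-dir}. Fix $p_{\max}:=\max_{1\le i\le N}p_i$ and restrict to the Zariski open set $\tilde{\mathcal P}_d^n(p_{\max})$, over which $Z^n_{d,\mathbf p}\to\mathcal P_d^n$ is an honest finite covering, so irreducibility of $Z^n_{d,\mathbf p}$ amounts to transitivity of its monodromy there. As base point take some $f^{*}\in\tilde{\mathcal P}_d^n(p_{\max})$ close to $F_0$: then the periodic points of period $\le p_{\max}$ are close to those of $F_0$ and their $n$ (now pairwise distinct) eigendirections are close to the coordinate directions $\bfe_1,\dots,\bfe_n$, which fixes a consistent labelling of eigendirections near $f^{*}$. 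After a harmless perturbation of the loops of Theorem~\ref{main_theorem_2} near $F_0$ so that they stay inside $\tilde{\mathcal P}_d^n(p_{\max})$ (a minor technical point, cf. Remark~\ref{rk-jordan}), that theorem together with Remark~\ref{rk-jordan} shows that the monodromy group already realizes, through loops preserving the eigendirection labels, every permutation of the marked periodic points that commutes with the dynamics. It therefore suffices to exhibit, for each marked cycle and each pair $(a,b)$ of labels, a loop in $\tilde{\mathcal P}_d^n(p_{\max})$ whose monodromy transposes the eigendirections labelled $a$ and $b$ along that cycle and fixes every other marked periodic point together with all of its eigendirections: such transpositions generate all independent relabellings of the eigendirections over the $N$ marked cycles, and combined with the periodic-point permutations above they generate a monodromy action that is transitive on the fibers; in particular $Z^n_{d,\mathbf p}$ is irreducible.

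For the basic swap I would transplant Proposition~\ref{prop-dir} into a two-dimensional coordinate block. Given a pair $(a,b)$ and a target cycle of period $p$, consider the family of maps of $\Cb^n$ acting by the power map $z\mapsto z^d$ on the coordinates in $\{1,\dots,n\}\setminus\{a,b\}$ and by $G_t(x,y)=(f(x)+\epsilon e^{2i\pi t}y,\;g(y)+\alpha x)$ on the coordinates $(z_a,z_b)$, where $f$ and $g$ (hence $\theta,\lambda,\dots$) are exactly the one-variable polynomials built before Proposition~\ref{prop-dir}, with this value of $p$. Such maps lie in $\mathcal P_d^n$ --- their top-degree parts are $(z_1^d,\dots,z_n^d)$, so they extend regularly to $\Pb^n$ --- and act trivially at infinity, so the whole discussion applies equally over $\mathrm{End}_d(\Pb^n)$, as in Remark~\ref{rk-Pk}. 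At any periodic point the Jacobian is block diagonal, with a diagonal block on the power-map coordinates and a $2\times2$ block on $(z_a,z_b)$; applying Proposition~\ref{prop-dir} to the factor $G_t$ then shows that the loop $t\mapsto G_t$ transposes the two eigendirections of this $2\times2$ block --- that is, the labels $a$ and $b$ --- at the cycle continued from the $G_0$-cycle through $(0,y_0)$, and fixes all other periodic points of period $\le p_{\max}$ together with all their eigendirections, the labels $k\neq a,b$ being carried untouched by the power-map coordinates. Two verifications remain, both routine: a generic choice of the periodic orbits placed on the power-map coordinates (enlarging their periods if needed) and of $\theta,\lambda$ keeps the $n$ eigenvalues pairwise distinct, distinct from $1$, and distinct from the two eigenvalues of the $2\times2$ block along the whole loop, so the loop stays in $\tilde{\mathcal P}_d^n(p_{\max})$; and the argument that no other periodic point or eigendirection is disturbed is, coordinate by coordinate, the one in Proposition~\ref{prop-dir}, again using Lemma~\ref{le-hyp} to make the relevant compositions hyperbolic and the corresponding loops contractible.

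It then remains to position the marked data. Given a marked cycle $\bfz_j$ and a pair $(a,b)$, I would use Theorem~\ref{main_theorem_2} --- its transitivity on configurations of $N$ marked periodic points --- to choose a path $\rho$ inside $\tilde{\mathcal P}_d^n(p_{\max})$ from $f^{*}$ to the base point $B$ of the block construction of period $p=p_j$, along which $\bfz_j$ is carried to the special cycle of Proposition~\ref{prop-dir} in the coordinates $(z_a,z_b)$ while every other marked point $\bfz_l$ is carried to a cycle whose $(z_a,z_b)$-projection is a repelling, hence non-special, cycle of $G_0$; such a target configuration over $B$ exists for the prescribed periods (with the same extra care as in the proof of Theorem~\ref{main_theorem_2} in the small-period cases, e.g. $d=2$ and period $2$). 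Concatenating $\rho$, the block swap loop at $B$, and $\rho^{-1}$ produces a loop in $\tilde{\mathcal P}_d^n(p_{\max})$ whose monodromy transposes the labels $a,b$ at $\bfz_j$ and fixes every other marked periodic point with all of its eigendirections --- exactly the loop required above. Letting $(a,b)$ and $j$ vary completes the proof over $\mathcal P_d^n$. For $\mathrm{End}_d(\Pb^n)$ the same argument applies, with Theorem~\ref{th-pk} in place of Theorem~\ref{main_theorem_2}, and with any marked periodic point lying on the hyperplane at infinity first brought into $\Cb^n$ by a conjugation loop $\phi_t\circ F_0\circ\phi_t^{-1}$ as in the proof of Theorem~\ref{th-pk}, which --- being a loop conjugate to the identity --- acts trivially on eigendirection labels.

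The main obstacle is the one recurring throughout the paper: ensuring that the swap loop transposes exactly two eigendirections at the chosen cycle and induces no undesired permutation of the finitely many other marked periodic points or of their eigendirections. This is overcome, as in Proposition~\ref{prop-dir}, by Lemma~\ref{le-hyp}, once Theorem~\ref{main_theorem_2} has been used to place the marked configuration so that precisely one marked cycle plays the role of the special cycle of the block construction and no other marked cycle interferes --- and this has to be arranged uniformly over all choices of the swapped pair of labels, which is the delicate bookkeeping in the argument.
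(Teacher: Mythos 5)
Your overall strategy matches the paper's: upgrade the monodromy of Theorem~\ref{main_theorem_2} by conjugating the eigendirection swap of Proposition~\ref{prop-dir} by suitable paths, working in $\tilde{\mathcal P}_d^n(p_{\max})$. However, the concrete block construction has a genuine gap. You place the pure power map $z\mapsto z^d$ on the $n-2$ coordinates outside the $(z_a,z_b)$ block, and claim that a \emph{generic} choice of periodic orbits on those coordinates (and of $\theta,\lambda$) yields $n$ pairwise distinct eigenvalues. This is false: the multiplier of $z\mapsto z^d$ at any $p$-periodic point $w\neq 0$ is $d^p w^{d^p-1}=d^p$, a constant depending only on the period. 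Hence whenever two of the $n-2$ non-block coordinates of a marked cycle share the same coordinate-period (which is unavoidable, e.g., for $n\geq 4$ and $p_j$ prime, since coordinate-periods must divide $p_j$), the Jacobian has a repeated eigenvalue $d^{q}$, the map falls outside $\tilde{\mathcal P}_d^n(p_{\max})$, and the eigendirections can no longer be followed along the swap loop. Varying $\theta,\lambda$ only moves the two eigenvalues of the $(z_a,z_b)$ block and cannot separate repeated power-map eigenvalues.

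The paper resolves exactly this by \emph{not} using the power map: it fixes $n$ distinct one-variable polynomials $f_1,\ldots,f_n$ (together with $f,g$) all with a fixed point at $0$ and pairwise disjoint spectra $S(h)$, so that the resulting product map $F$ does lie in $\tilde{\mathcal P}_d^n(p_{\max})$ and serves as a legitimate base point. A second, related, point you handle by repositioning alone: because your construction is a true product of the power map with the $G_t$ block, any other marked cycle whose $(z_a,z_b)$-projection coincides with the special cycle would also get its eigendirections swapped; you arrange the marked configuration so that this does not happen. The paper's argument is more robust: it further deforms the base to $H$ by adding couplings $\sum_{q\geq 3}\beta_q x_q^2$ into the first coordinate, with $\beta_q$ chosen inductively via Lemma~\ref{le-hyp}, so that the loop $H_t$ provably acts trivially on any periodic point with a nonzero coordinate outside the first two, independently of positioning. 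These two design choices — non-power factor maps with disjoint spectra, and the quadratic couplings — are the ingredients your proposal is missing, and both are needed for the argument to go through.
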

\begin{proof}
	Define $p_{\max}:=\max\{p_i\,;\, 1\leq i\leq N\}.$ We first fix $1\leq i\leq N$ and take $f,$ $g$ and $y_0$ as in Proposition \ref{prop-dir} with $p=p_i.$ We then choose $f_1,\ldots,f_n\in\mathcal P_d^{1}$ for which $0$ is a fixed point and such that the spectra
	$$S(h):=\{\lambda^{q}\,\mid\, 1\le q\le p_{\max}, \, \lambda \text{ is the multiplier of a }p\text{-periodic point of }h \text{ with }\ p\leq p_{\max}\}$$
	are pairwise disjoint for $h\in\{f,f_1,\ldots,f_n\}$ and do not contain $1.$ This holds for a generic choice of $(f_1,\ldots,f_n).$ It implies that the product map $F\colon(x_1,\ldots,x_n)\mapsto(f_1(x_1),\ldots,f_n(x_n))$ is in $\tilde{\mathcal P}_d^{n}(p_{\max}).$
	
	Now, let $(F,((\bfz_j,\bfv_j))_{1\leq j\leq N})$ be a point in $Z^n_{d,\mathbf p}$ over $F$ and recall that $i$ has been fixed above. Since $F$ is a product map, the eigendirection $\bfv_i$ corresponds to an axis $\bfe_s:=[0:\cdots:0:1:0:\cdots:0]$ with a $1$ at the $s$-th coordinate. Let $1\leq r\leq n$ be another integer. By Theorem \ref{main_theorem_2}, to prove the statement it is enough to find a loop in $\tilde{\mathcal P}_d^{n}(p_{\max})$ at $F$ which exchanges $(\bfz_i,\bfv_i)$ with $(\bfz_i,\bfe_r),$ leaving unchanged all the pairs $(\bfz_j,\bfv_j)$ for $j\neq i.$ This is what Proposition \ref{prop-dir} provides when $n=2$ but we have to pay attention to the extra dimensions.
	
	To simplify the notations, assume that $s=1$ and $r=2.$ 
	Using the assumption on the spectra above and Lemma \ref{le-hyp}, if $\alpha>0$ is large enough then the $p$-periodic points of
	$$G\colon(x_1,\ldots,x_n)\mapsto(g(x_1)+\alpha x_2,f(x_2),f_3(x_3),\ldots,f_n(x_n))$$
	with $p\leq p_{\max}$ only have simple eigenvalues. Hence, by Theorem \ref{main_theorem_2} and Remark \ref{rk-jordan}, there is a path in $\tilde{\mathcal P}_d^{n}(p_{\max})$ from $F$ to $G$ such that the analytic continuation of $(\bfz_j,\bfv_j)$ is $(\bfz_j',\bfv_j)$ where $\bfz_i'=(y_0,0,\ldots,0).$ Recall that $y_0$ is a $p_i$-periodic point for $g$ as in Proposition \ref{prop-dir} and that $0$ is a fixed point for $f$ and all $f_i.$ In order to only permute the direction of $\bfz_i'$, we continue the deformation to
	$$H\colon(x_1,\ldots,x_n)\mapsto(g(x_1)+\alpha x_2+\sum_{3\leq q\leq n}\beta_qx_q^2,f(x_2),f_3(x_3),\ldots,f_n(x_n)),$$
	where $0<\beta_3<\cdots<\beta_n$ are large constants obtained by induction using Lemma \ref{le-hyp} in the following way. For each $3\leq q\leq n,$ let $R_q>1$ be such that the annulus $D(0,R_q)\setminus D(0,R_q^{-1})$ contains all the $p$-periodic points of $f_q,$ with $p\leq p_{\max},$ except $0.$ $R_2$ is defined in the same way using $f.$ We set $K_2:=\overline{D(0,R_2)}\setminus D(0,R_2^{-1}),$ $K_q:=\overline{D(0,R_q^2)}\setminus D(0,R_q^{-2})$ and we choose a constant $C>0$ which largely dominates the derivative of $f$ and of the $f_q$ on their Julia sets. Then, $\beta_3$ is given by Lemma \ref{le-hyp} using the constant $C$ and the compacts $K_2$ and $L_2:=\{g+l\,;\, l\in \overline{D(0,\alpha R_2)}\}.$ By induction, $\beta_q$ is defined in the same way with $K_{q-1}$ and $L_{q-1}:=\{g+l\,;\, l\in\overline{D(0,\alpha R_2+\sum_{m=3}^{q-1}\beta_mR_m^2)}\}.$ These choices ensure that if $\epsilon>0$ is small enough then the loop $[0,1]\ni t\mapsto H_t$, where 
	$$H_t\colon(x_1,\ldots,x_n)\mapsto(g(x_1)+\alpha x_2+\sum_{3\leq q\leq n}\beta_qx_q^2,f(x_2)+\epsilon e^{2i\pi t} x_1,f_3(x_3),\ldots,f_n(x_n)),$$
	acts trivially on $(\bfz'_j,\bfv_j)$ if the last $n-2$ coordinates of $\bfz'_j$ are not simultaneously equal to $0.$ Moreover, since the coordinates $x_q$ are squared, if the last $n-2$ coordinates of $\bfz'_j$ vanish then the action of the loop on such points $\bfz'_j$ is the same as the one described in Proposition \ref{prop-dir}, i.e., it only exchanges $(\bfz'_i,\bfe_1)$ with $(\bfz'_i,\bfe_2).$ This concludes the proof over $\mathcal P_d^n.$

	The case of $\mathrm{End}_d(\Pb^n)$ follows in the same way using the arguments from the proof of Theorem \ref{th-pk}.
\end{proof}

\section{The diagonal entries of the Jacobian and the eigenvalue maps}\label{sec_diag_entries}

We recall that for a positive integer $p_{\max}>0$, the set 
$\mathcal P_{d}^n(p_{\max})\subset \mathcal P_d^n$ is the Zariski open subset of $\mathcal P_d^n$ that consists of all maps whose cycles of period less than or equal to $p_{\max}$ don't have eigenvalues that are equal $1$. We also consider a Zariski open subset $\tilde{\mathcal P}_d^n(p_{\max}) \subset \mathcal P_{d}^n(p_{\max})$  of $\mathcal P_{d}^n(p_{\max})$ that consists of all the maps whose cycles of period less or equal than $p_{\max}$ all have distinct eigenvalues that are not equal to $1$. 
The assumptions on the eigenvalues ensure that each periodic point of period less or equal than $p_{\max}$ can be followed locally and analytically in $\mathcal P_{d}^n(p_{\max})$, and every eigenvalue of such a point can be followed locally and analytically in the smaller subset $\tilde{\mathcal P}_{d}^n(p_{\max})$. Analytic continuation of either a periodic point or its eigenvalue is then well defined over $\mathcal P_{d}^n(p_{\max})$ and $\tilde{\mathcal P}_{d}^n(p_{\max})$ respectively and results in a (multiple valued) algebraic function.

More specifically, for a map $G_0\in \mathcal P_d^n(p_{\max})$ and a periodic point $\bfw_0\in\bbC^n$ of $G_0$ with period $p\le p_{\max}$, there exists a neighborhood $\mathcal U\subset \mathcal P_d^n(p_{\max})$ of $G_0$, such that analytic continuation of the periodic point $\bfw_0$ is well defined in $\mathcal U$ and results in an analytic (single valued) map $\mathcal U\ni G\mapsto \bfw(G)$ with $\bfw(G_0) = \bfw_0$.

For each index $k=1,\ldots,n$, one can consider an analytic function
$$
\rho_{k,\bfw_0}\colon \mathcal U\to\bbC,
$$
defined as the $k$-th diagonal entry of the Jacobian matrix $DG^p$ evaluated at the point $\bfw(G)$. Furthermore, if a neighborhood $\tilde{\mathcal U}\subset \mathcal U$ is simply connected and contained in $\tilde{\mathcal P}_{d}^n(p_{\max})$, $G_0\in\tilde{\mathcal U}$, and $\lambda_0\in\bbC$ is an eigenvalue of the Jacobian matrix $DG_0^p$, then the analytic continuation of this eigenvalue is well defined and gives an analytic map  
$$
\lambda\colon \tilde{\mathcal U}\to\bbC
$$
defined (and single valued) in the neighborhood $\tilde{\mathcal U}$ and satisfying $\lambda(G_0)=\lambda_0$.

A natural choice of local coordinates in $\mathcal U$ can be described as follows. 
For each index $m=1,\ldots, n$, let $\bfe_m$ denote the $m$-th coordinate unit vector in $\bbC^n$. For a multi-index $I = (i_1,\ldots,i_n)$ and an index $m\in\{1,\ldots,n\}$, let $\bfz^I$ denote
$$
\bfz^I = z_1^{i_1}\ldots z_n^{i_n}\qquad \text{and define}
$$
$$
P_{m,I}\colon\bbC^n\to\bbC^n\qquad\text{by}\qquad P_{m,I}(\bfz) = \bfz^I\bfe_m.
$$
If $\mathcal I_d^n$ is the set of multi-indices $I$, for which $P_{m,I}\in\mathcal P_d^n$, then all polynomials 
$$
\{P_{m,I}\mid I\in\mathcal I_d^n, 1\le m\le n\}
$$
form a local basis at any point of $\mathcal U$. Thus, for each $I\in\mathcal I_d^n$, $1\le m\le n$ we can consider the directional derivative operator $\partial_{m,I}$ in the direction of the polynomial $P_{m,I}$. In particular, for each $G\in\mathcal U$, we define
\begin{equation}\label{partial_rho_eq}
\partial_{m,I} \rho_{k,\bfw_0}(G):= \left.\frac{d}{dt}\right|_{t=0}\rho_{k,\bfw_0}(G+t P_{m,I}),
\end{equation}
and for each $G\in\tilde{\mathcal U}$, we define
\begin{equation}\label{partial_lambda_eq}
\partial_{m,I} \lambda(G):= \left.\frac{d}{dt}\right|_{t=0}\lambda(G+t P_{m,I}).
\end{equation}

\begin{remark}
	Note that the derivatives~(\ref{partial_rho_eq}) and~(\ref{partial_lambda_eq}) are well defined even when the multi-index $I$ does not belong to $\mathcal I_d^n$ (i.e., $I$ can have components greater than $d$). It will be assumed in all subsequent statements involving the above derivatives that the multi-index $I$ does not necessarily belong to $\mathcal I_d^n$, unless stated otherwise.	
\end{remark}

Next, consider a restricted class of maps that consist of all polynomials of the form

$$
F_\bfc (z_1,\ldots,z_n) = (z_1^d+c_1, z_2^d+c_2,\ldots, z_n^d+c_n),
$$
indexed by the vectors $\bfc = (c_1,\ldots,c_n)\in\bbC^n$.
Let $\mathcal A_d^n\subset \mathcal P_d^n$ be the set of all such maps, i.e.
$$
\mathcal A_d^n = \{F_\bfc \mid \bfc\in\bbC^n\}.
$$
For each $p_{\max}\ge 1$ we also introduce the sets
$$
\mathcal A_{d}^n(p_{\max}) := \mathcal A_d^n\cap \mathcal P_{d}^n(p_{\max})\qquad\text{and}
$$
$$
\tilde{\mathcal A}_{d}^n(p_{\max}) := \mathcal A_d^n\cap \tilde{\mathcal P}_{d}^n(p_{\max}). 
$$

\begin{lemma}\label{coincide_lemma}
	(1) The sets $\mathcal A_{d}^n(p_{\max})$ and $\tilde{\mathcal A}_{d}^n(p_{\max})$ satisfy $\tilde{\mathcal A}_{d}^n(p_{\max}) \subset \mathcal A_{d}^n(p_{\max})$ and are Zariski open in $\mathcal A_d^n$, for any integer $p_{\max}\ge 1$.
	
	(2) Given a periodic point $\bfw_0$ of $G_0\in \tilde{\mathcal A}_{d}^n(p_{\max})$ and an eigenvalue map $\lambda$ as above, defined in a neighborhood $\tilde{\mathcal U}$ of $G_0$ in $\tilde{\mathcal P}_{d}^n(p_{\max})$, there exists an index $k\in\{1,\ldots,n\}$, such that the following holds:
	\begin{itemize}
		\item $\lambda \equiv \rho_{k,\bfw_0}$ on $\tilde{\mathcal U}\cap\tilde{\mathcal A}_d^n(p_{\max})$;
		\item for any $m=1,\ldots,n$ and any multi-index $I$, we have $\partial_{m,I}\lambda \equiv \partial_{m,I}\rho_{k,\bfw_0}$ on $\tilde{\mathcal U}\cap\tilde{\mathcal A}_d^n(p_{\max})$.		
	\end{itemize}
\end{lemma}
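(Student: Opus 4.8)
The plan is to exploit the obvious symmetry of the family $\mathcal A_d^n$ under the finite group of coordinate-permuting conjugacies together with the block structure of the Jacobian along the skew-product subvariety.

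\medskip

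\emph{Part (1).} Both $\mathcal P_d^n(p_{\max})$ and $\tilde{\mathcal P}_d^n(p_{\max})$ are by definition Zariski open subsets of $\mathcal P_d^n$ (they are complements of the vanishing loci of resultant-type polynomials in the coefficients of $F$, expressing that some cycle of period $\le p_{\max}$ has an eigenvalue equal to $1$, resp.\ a repeated eigenvalue or an eigenvalue equal to $1$). Intersecting with the affine subspace $\mathcal A_d^n\cong\bbC^n$ yields Zariski open subsets of $\mathcal A_d^n$, and the inclusion $\tilde{\mathcal A}_d^n(p_{\max})\subset \mathcal A_d^n(p_{\max})$ is immediate from $\tilde{\mathcal P}_d^n(p_{\max})\subset \mathcal P_d^n(p_{\max})$. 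One should also note that these sets are \emph{nonempty}, e.g.\ since $F_0$ (possibly after a tiny perturbation of the $c_i$) lies in them, so the statement is not vacuous; but strictly that is not needed for the assertion as stated.

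\medskip

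\emph{Part (2), the index $k$.} Fix $G_0=F_\bfc\in\tilde{\mathcal A}_d^n(p_{\max})$ with $\bfc=(c_1,\dots,c_n)$, and let $\bfw_0=(w_1,\dots,w_n)$ be a periodic point of exact period $p\le p_{\max}$. Since $G_0$ is a product map, $G_0^p$ is also a product map, so $D_{\bfw_0}G_0^p$ is the \emph{diagonal} matrix with entries $(g_{c_1}^p)'(w_1),\dots,(g_{c_n}^p)'(w_n)$, where $g_{c_i}(z)=z^d+c_i$. Hence the eigenvalues of $D_{\bfw_0}G_0^p$ are exactly these $n$ diagonal entries, and the eigenvalue $\lambda_0=\lambda(G_0)$ equals $(g_{c_k}^p)'(w_k)$ for a unique $k$ (unique because $G_0\in\tilde{\mathcal A}_d^n(p_{\max})$, so all $n$ eigenvalues are distinct). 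This $k$ is our choice of index. By construction, $\lambda_0=\rho_{k,\bfw_0}(G_0)$, since the $k$-th diagonal entry of $D_{\bfw_0}G_0^p$ is precisely $(g_{c_k}^p)'(w_k)$.

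\medskip

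\emph{Agreement on $\tilde{\mathcal U}\cap\tilde{\mathcal A}_d^n(p_{\max})$.} Now restrict both $\lambda$ and $\rho_{k,\bfw_0}$ to $V:=\tilde{\mathcal U}\cap\tilde{\mathcal A}_d^n(p_{\max})$, which we may assume connected (shrinking $\tilde{\mathcal U}$ to a connected simply connected neighborhood of $G_0$; note $V$ is then connected because $\mathcal A_d^n$ is smooth and $V$ is a connected open subset of it containing $G_0$ — if necessary replace $\tilde{\mathcal U}$ by a ball in suitable coordinates). Along $V$ every map $F_{\bfc'}$ is a product map, so $D_{\bfw(F_{\bfc'})}F_{\bfc'}^p$ is again diagonal; therefore $\rho_{k,\bfw_0}(F_{\bfc'})$, the $k$-th diagonal entry, is an eigenvalue of $D_{\bfw(F_{\bfc'})}F_{\bfc'}^p$, and it is one that depends continuously (indeed holomorphically) on $\bfc'$ and equals $\lambda_0$ at $\bfc'=\bfc$. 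On the other hand, $\lambda$ is by definition the analytic continuation of the eigenvalue $\lambda_0$ along paths in $\tilde{\mathcal U}$; since on $\tilde{\mathcal P}_d^n(p_{\max})$ all eigenvalues of $p$-cycles are simple, the eigenvalue functions are locally given by distinct holomorphic branches that never collide, so the branch through $(G_0,\lambda_0)$ is the unique holomorphic function on $\tilde{\mathcal U}$ taking the value $\lambda_0$ at $G_0$ and being at each point an eigenvalue of the relevant Jacobian. Both $\lambda|_V$ and $\rho_{k,\bfw_0}|_V$ have exactly these properties, hence they coincide on $V$. This gives the first bullet.

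\medskip

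\emph{Agreement of the directional derivatives.} For the second bullet, fix $m\in\{1,\dots,n\}$ and a multi-index $I$. The point is that the derivative $\partial_{m,I}$ is taken in the direction $P_{m,I}=\bfz^I\bfe_m$, which is \emph{not} tangent to $\mathcal A_d^n$ in general, so the first bullet does not immediately apply. However, note that $\lambda$ and $\rho_{k,\bfw_0}$ are both defined and holomorphic on the full neighborhood $\tilde{\mathcal U}$ in $\tilde{\mathcal P}_d^n(p_{\max})$ — the eigenvalue $\lambda$ by the hypothesis of the lemma, and $\rho_{k,\bfw_0}$ because the $k$-th diagonal entry of $DG^p$ at $\bfw(G)$ makes sense for every $G\in\mathcal U\supset\tilde{\mathcal U}$, not just on $\mathcal A_d^n$. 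So the real claim is: the two holomorphic functions $\lambda$ and $\rho_{k,\bfw_0}$ on $\tilde{\mathcal U}$, which agree on the submanifold $V=\tilde{\mathcal U}\cap\mathcal A_d^n$, have the \emph{same} directional derivative $\partial_{m,I}$ at every point of $V$. This does \emph{not} follow from mere agreement on a submanifold — it must be proved, and this is the main point. The cleanest way is to show directly that $\lambda\equiv \rho_{k,\bfw_0}$ on a neighborhood of $V$ inside $\tilde{\mathcal U}$ that is ``fat in all directions'', or rather to compute $\partial_{m,I}\rho_{k,\bfw_0}$ and $\partial_{m,I}\lambda$ via a common formula. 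Concretely: for $G$ near $G_0$, let $\bfw(G)$ be the continued periodic point and $A(G):=D_{\bfw(G)}G^p$. Both quantities are functions of $A(G)$ and $\bfw(G)$: $\rho_{k,\bfw_0}(G)=\bfe_k^{\mathsf T}A(G)\bfe_k$, while $\lambda(G)$ is the simple eigenvalue of $A(G)$ that is the continuation of $\lambda_0$, and by the standard perturbation formula for a simple eigenvalue, $d\lambda(G)=\frac{u^{\mathsf T}\,dA(G)\,v}{u^{\mathsf T}v}$ where $u,v$ are left/right eigenvectors of $A(G)$ for $\lambda(G)$. At a point $G\in V$ the matrix $A(G)$ is diagonal with the $\lambda(G)$-eigenvalue sitting in the $k$-th slot, so $u=v=\bfe_k$ and $u^{\mathsf T}v=1$; hence $d\lambda(G)=\bfe_k^{\mathsf T}\,dA(G)\,\bfe_k=d\rho_{k,\bfw_0}(G)$ as \emph{linear functionals on the full tangent space} $T_G\mathcal P_d^n$, not merely on $T_G\mathcal A_d^n$. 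Applying this equality of differentials to the tangent vector $P_{m,I}$ gives $\partial_{m,I}\lambda(G)=\partial_{m,I}\rho_{k,\bfw_0}(G)$ for all $G\in V$, which is the second bullet.

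\medskip

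\emph{Where the difficulty lies.} Parts (1) and the first bullet of (2) are essentially bookkeeping with the product structure and uniqueness of analytic continuation; the only genuine content is the second bullet, and there the key realization is that one must not argue ``equal on a submanifold $\Rightarrow$ equal derivatives'' (false in general) but instead use the first-order perturbation formula for a simple eigenvalue together with the fact that along $\mathcal A_d^n$ the Jacobian is diagonal, which forces the left and right eigenvectors to be the standard basis vector $\bfe_k$ and thereby collapses the spectral projector to the $(k,k)$ matrix unit. One small technical point to be careful about: $\partial_{m,I}$ may be taken in a direction $P_{m,I}$ with $I\notin\mathcal I_d^n$ (higher degree), which formally leaves $\mathcal P_d^n$; but as noted in the Remark preceding the lemma, $\rho_{k,\bfw_0}(G+tP_{m,I})$ and $\lambda(G+tP_{m,I})$ still make sense for small $t$ because the periodic point and its simple eigenvalue persist under the (non-polynomial-degree-preserving but still small) perturbation, and the perturbation formula above applies verbatim.
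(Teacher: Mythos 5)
Your proof is correct, and for part (2) you take a genuinely different route than the paper. The paper observes that along the entire one-parameter family $G_t = G + tP_{m,I}$ (not just at $t=0$), the Jacobian $J_t$ of $G_t^p$ at the continued periodic point has nonzero entries only on the diagonal and in the $m$-th row, because $G_t$ keeps the skew-product structure (all coordinates except the $m$-th are still one-variable maps). For such a matrix the eigenvalues are the diagonal entries, so $\lambda(G_t) = \rho_{k,\bfw_0}(G_t)$ for all small $t$, and differentiating at $t=0$ gives both bullets in one stroke. You instead invoke the first-order perturbation formula $d\lambda = u^{\mathsf T}\,dA\,v/(u^{\mathsf T}v)$ for a simple eigenvalue and note that diagonality of $A(G)$ at $G\in V$ forces $u=v=\bfe_k$, so the spectral projector collapses to the $(k,k)$ matrix unit and the differentials agree as functionals on the whole tangent space. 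Your route is more robust (it works for an arbitrary perturbation direction, not only the monomial directions $P_{m,I}$); the paper's is more elementary (no spectral perturbation theory, just the observation that products of arrow matrices stay arrow matrices). You also correctly flag the subtlety that agreement on a submanifold does not by itself give agreement of transverse derivatives, which is the heart of the second bullet.

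One minor point on part (1): the paper's proof additionally establishes that $\mathcal A_d^n(p_{\max})$ and $\tilde{\mathcal A}_d^n(p_{\max})$ are \emph{non-empty and dense} Zariski open subsets (complements of finitely many proper codimension-$1$ algebraic subsets), by observing that the multipliers along different axes can be varied independently via the $c_i$. You only prove Zariski openness by intersecting and acknowledge non-emptiness in passing (``possibly after a tiny perturbation of the $c_i$''). While the lemma as literally stated does not demand non-emptiness, that property is what is actually used in the proof of Proposition~\ref{lambda_loc_indep_prop} (to find maps in $\tilde{\mathcal A}_d^n(p_{\max})$ arbitrarily close to $F_0$), so it is worth making the independence argument explicit rather than gesturing at a perturbation.
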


\begin{proof}
The Jacobian matrix of any $G\in\mathcal A_d^n$ at any periodic point of period $p$ is diagonal; the elements on the diagonal of $G^p$ (i.e., the eigenvalues) are the multipliers of the corresponding periodic points for the one-dimensional maps $z\mapsto z^d+c_k$. By changing the constants $c_1,\ldots,c_n$, one can change these multipliers independently from each other, hence, the sets $\mathcal A_{d}^n(p_{\max})$ and $\tilde{\mathcal A}_{d}^n(p_{\max})$ are complements of the union of finitely many codimension $1$ algebraic subsets. This proves (1).

Assume, $G\in \tilde{\mathcal U}\cap\tilde{\mathcal A}_d^n(p_{\max})$ and the marked periodic point $\bfw = \bfw(G)$ has period $p$. Given a polynomial $P_{m,I}$ define a local one-parameter family of maps $G_t:= G+tP_{m,I}$, parameterized by $t\in (\bbC,0)$. According to the Implicit Function Theorem, the periodic point $\bfw(G_t)$ and the Jacobian matrix $J_t$ of $G_t^p$ at $\bfw(G_t)$ are well defined for $t$ close to zero. Due to the skew product structure of the maps $G_t$, it is not difficult to check that $J_t$ is a matrix whose only nonzero elements are located either on the diagonal, or in the $m$-th row. The eigenvalues of such a matrix are the diagonal elements, which immediately implies the statement of part (2).
\end{proof}

\section{Partial derivatives of the maps $\rho_{k,\bfw_0}$}\label{sec_part_der}

Consider the map $F_0\in\mathcal P_d^n$ defined by
$$
F_0(z_1,\ldots,z_n) = (z_1^d,\ldots, z_n^d).
$$

For any $p_{\max}\ge 1$, the map $F_0$ belongs to the difference $\mathcal A_d^n(p_{\max})\setminus \tilde{\mathcal A}_{d}^n(p_{\max})$ since it does not have multiple periodic points, but has fixed points and cycles with equal eigenvalues. 

For any periodic point $\bfw_0$ of $F_0$ with period $p\le p_{\max}$, there exists a neighborhood $\mathcal U\subset \mathcal P_d^n(p_{\max})$, such that for any index $k=1,\ldots, n$, the map $\rho_{k,\bfw_0}$ is well defined and analytic in $\mathcal U$.

Given a multi-index $I=(i_1,\ldots,i_n)$ as above, let $I_k$ denote the multi-index, obtained from $I$ by changing $i_k$ to $0$. That is, 
\begin{equation}\label{I_k_def_equation}
I_k = (i_1,\ldots,i_{k-1},0,i_{k+1},\ldots,i_n).
\end{equation}

The main technical result of this section is the following lemma.

\begin{lemma}\label{computation_lemma}
	Assume that in the above notation, none of the coordinates of a periodic point $\bfw_0 = (w_1,\ldots,w_n)\in\bbC^n$ are equal to zero. Let $p$ be the period of $\bfw_0$. Then for any multi-index $I = (i_1,\ldots,i_n)$ and any $k, m\in\{1,\ldots,n\}$, the following holds:
	\begin{equation*}
		\displaystyle\partial_{m,I}\rho_{k,\bfw_0}(F_0) = \begin{cases}
			0 & \text{if }m\neq k \\
			(i_kd^{p-1}-d^p)\sum_{i=0}^{p-1} (\bfw_0^{I_k})^{d^i} w_k^{d^i(i_k-d)} & \text{if } m=k.
		\end{cases}		
	\end{equation*}

\end{lemma}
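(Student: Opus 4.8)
The computation is entirely about the skew-product (in fact, product) structure near $F_0$, so I would begin by reducing the $n$-variable statement to an essentially one-dimensional one. First, note that perturbations $F_0 + tP_{m,I}$ only affect the $m$-th coordinate of the map, so when $m\neq k$, the $k$-th coordinate map $z_k\mapsto z_k^d$ is untouched, and the $k$-th diagonal entry of the Jacobian of $(F_0+tP_{m,I})^p$ depends only on the orbit of the $k$-th coordinate under the first $k{-}1$ and the $k$-th coordinate maps — none of which involve $t$ unless $m=k$. A careful chain-rule bookkeeping of how $\rho_{k,\bfw_0}$ is built from the coordinate maps shows the $m\neq k$ case gives $0$. (One subtlety: the periodic point $\bfw(G_t)$ itself moves with $t$; but its $k$-th coordinate $w_k(t)$ depends on $t$ only through the $k$-th coordinate map, hence is constant in $t$ when $m\neq k$, so this causes no extra terms.)

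**The main case $m=k$.** For $m=k$, I would fix all coordinates except the $k$-th and work with the one-parameter family of one-dimensional maps
$$
g_t(z) := z^d + t\, \bfw_0^{I_k}\, z^{i_k} + (\text{terms from moving }w_j,\ j\neq k),
$$
obtained by freezing the other coordinates at $w_j$ (to first order in $t$ these other coordinates do not move, by the $m\neq k$ analysis, so the parenthetical terms actually vanish to first order). So effectively $g_t(z) = z^d + t\,\bfw_0^{I_k} z^{i_k}$, and $w_k$ is a periodic point of period dividing $p$ for the $p$-fold composition. Then $\rho_{k,\bfw_0}(F_0 + tP_{k,I})$ equals $(g_t^{\circ p})'$ evaluated at the continuation $w_k(t)$ of $w_k$. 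I would differentiate this at $t=0$ using two ingredients: (i) the standard formula $(g_t^{\circ p})'(w_k(t)) = \prod_{i=0}^{p-1} g_t'(g_t^{\circ i}(w_k(t)))$, and (ii) the logarithmic derivative, so that
$$
\partial_{k,I}\rho_{k,\bfw_0}(F_0) = \Bigl(\prod_{i=0}^{p-1} d\, w_k^{d^i(d-1)}\Bigr)\sum_{i=0}^{p-1} \left.\frac{\partial_t\bigl(g_t'(g_t^{\circ i}(w_k(t)))\bigr)}{g_0'(g_0^{\circ i}(w_k))}\right|_{t=0}.
$$
Here $g_0^{\circ i}(w_k) = w_k^{d^i}$ and $g_0'(w) = d w^{d-1}$, so the prefactor product is $d^p w_k^{\,d^p - 1}$ (using $\sum d^i(d-1) = d^p-1$), which is exactly $(g_0^{\circ p})'(w_k) = \lambda$.

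**Computing the sum.** For the $i$-th summand I need $\partial_t|_{0}\bigl(g_t'(g_t^{\circ i}(w_k(t)))\bigr)$. Write $g_t'(z) = d z^{d-1} + t\, i_k \bfw_0^{I_k} z^{i_k-1}$. There are two contributions: the explicit $t$ inside $g_t'$, giving $i_k \bfw_0^{I_k}(w_k^{d^i})^{i_k-1}$; and the motion of the point $g_t^{\circ i}(w_k(t))$, giving $g_0''(w_k^{d^i})\cdot \partial_t|_0\bigl(g_t^{\circ i}(w_k(t))\bigr)$. For the second I would establish by an easy induction on $i$ that $\partial_t|_0\bigl(g_t^{\circ i}(w_k(t))\bigr)$ — keeping in mind $w_k(t)$ itself moves — works out so that, after dividing by $g_0'(w_k^{d^i}) = d w_k^{d^i(d-1)}$ and summing a telescoping/geometric series, the total collapses to
$$
\partial_{k,I}\rho_{k,\bfw_0}(F_0) = d^p w_k^{d^p-1}\cdot\frac{i_k d^{p-1} - d^p}{d^p w_k^{d^p-1}}\sum_{i=0}^{p-1}(\bfw_0^{I_k})^{d^i} w_k^{d^i(i_k-d)},
$$
i.e.\ the claimed formula. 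The factor $i_k d^{p-1} - d^p$ should emerge as follows: differentiating $g_t^{\circ p}$ once gives $p$ copies of the "explicit" perturbation term propagated forward, each weighted by derivatives of the remaining iterates; the homogeneity of $g_0$ (scaling $z\mapsto z^{d^i}$) converts these into the single geometric sum, with the coefficient $i_k$ coming from $g_t'$ and the $-d$ coming from the normalization by $\lambda$ and the motion of the periodic point — the $d^{p-1}$ and $d^p$ weights being the products of $d$'s along $p-1$ vs.\ $p$ steps.

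**Where the difficulty lies.** The conceptual content is small, but the bookkeeping in the last paragraph is the delicate part: one must correctly track \emph{both} the explicit $t$-dependence of $g_t$ \emph{and} the implicit motion of the periodic point $w_k(t)$ (whose derivative is itself obtained by implicitly differentiating $g_t^{\circ p}(w_k(t)) = w_k(t)$, giving $\partial_t w_k(0) = \frac{\text{(sum of perturbation terms)}}{1-\lambda}$), and see that the two conspire so the final answer is a clean geometric sum with the stated integer coefficient. I would organize this as a single induction on $p$ (or on $i$ inside a fixed $p$), exploiting that $g_0$ is a pure power map so all intermediate points are explicit monomials $w_k^{d^i}$ and $\bfw_0^{I_k}$-powers, which is what makes the series telescope. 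The $m\neq k$ vanishing and the extraction of the prefactor $\lambda = d^p w_k^{d^p-1}$ are routine; the integer $i_k d^{p-1} - d^p$ is the one place to be careful.
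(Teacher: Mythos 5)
Your overall plan (logarithmic derivative of the product of derivatives along the orbit, tracking both the explicit $t$-dependence of the map and the implicit motion of the periodic point) matches the paper's approach, and your treatment of the $m\neq k$ case is correct. However, there is a genuine error in the reduction for $m=k$ that would make the computation come out wrong.

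You propose to ``freeze the other coordinates at $w_j$'' and reduce to iterating a single autonomous one-dimensional map
$g_t(z)=z^d+t\,\bfw_0^{I_k}z^{i_k}$. This is incorrect, because along the orbit the frozen coordinates are \emph{not} constant: at the $i$-th step of the orbit of $F_0$, the $j$-th coordinate ($j\neq k$) is $w_j^{d^i}$, not $w_j$. Consequently the perturbation term seen by the $k$-th coordinate at step $i$ carries the coefficient $\bfw_i^{I_k}=(\bfw_0^{I_k})^{d^i}$, which depends on $i$. The $k$-th coordinate therefore evolves under a \emph{non-autonomous} cocycle
\[
w_{i+1,k}(t)=f_{t,i}\bigl(w_{i,k}(t)\bigr),\qquad f_{t,i}(z)=z^d+t\,z^{i_k}\bigl(\bfw_0^{I_k}\bigr)^{d^i},
\]
and $\rho_{k,\bfw_0}(F_t)=\prod_{i=0}^{p-1}f'_{t,i}(w_{i,k}(t))$, \emph{not} $(g_t^{\circ p})'(w_k(t))$. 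If you carry out your plan with the autonomous $g_t$, every summand will carry the factor $\bfw_0^{I_k}$ instead of $(\bfw_0^{I_k})^{d^i}$, and the answer will not match the stated formula (these differ unless $(\bfw_0^{I_k})^{d^i}=\bfw_0^{I_k}$ for all $i$, which fails for general periodic points). This is the step that needs to be repaired: you must replace the single map $g_t$ by the $i$-dependent family $f_{t,i}$ throughout both the computation of $\left.\tfrac{d w_{i,k}(t)}{dt}\right|_{t=0}$ (via implicit differentiation of the cocycle and the periodicity condition $w_{p,k}(t)=w_{0,k}(t)$) and the logarithmic-derivative sum. Once that is done, the rest of your bookkeeping---the prefactor $\lambda=d^pw_k^{d^p-1}$, the two contributions (explicit $t$ in $f'_{t,i}$ and the motion of the point), and the collapse to a geometric sum with coefficient $i_kd^{p-1}-d^p$---goes through essentially as you describe and coincides with the paper's proof.
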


\begin{proof}
	
	If $m\neq k$ then the lemma is obvious since in this case $\rho_{k,\bfw_0}$ is constant along the family of maps $F_t = F_0+t P_{m,I}$. Thus, we focus only on the case when $m=k$.
	
	Let $(\bfw_0(t), \bfw_1(t),\dots)$ be a periodic orbit of the polynomial $F_{t}=F_0+tP_{k,I}$ with $\bfw_i(t)=\bfw_{i+p}(t)$ for all integers $i$ and $\bfw_0(0)=\bfw_0$. For each index $i\in\bbN$ and $t\in(\bbC,0)$, we write $\bfw_i(t) = (w_{i,1}(t),\ldots,w_{i,n}(t))\in\bbC^n$. To simplify the notation, we will abbreviate $\bfw_i = \bfw_i(0)$ and $w_{i,s}=w_{i,s}(0)$.

	First we will compute the derivatives 
	$\left.\frac{dw_{i,s}(t)}{dt}\right|_{t=0}$. We note that
	$$
	\left.\frac{dw_{i,s}(t)}{dt}\right|_{t=0} = 0\qquad\text{for all }s\neq k.
	$$

	For $s=k$, since $w_{i+1,k}(t)=(w_{i,k}(t))^d + t(\bfw_i(t))^{I}$, we have
	$$
	\left.\frac{d w_{i+1,k}(t)}{d t}\right|_{t=0}= dw_{i,k}^{d-1}  \frac{d w_{i,k}}{d t}(0)+ \bfw_i^I.
	$$
	Since $w_{i,k}'(t)=w_{i+p,k}'(t)$, from the previous equality it follows that
	$$
	\left.\frac{d w_{i,k}(t)}{d t}\right|_{t=0}=
	\frac{\sum_{s=0}^{p-1}\bfw_{i+s}^I\prod_{r=s+1}^{p-1} \left(dw_{i+r,k}^{d-1} \right) }
	{1-\prod_{s=0}^{p-1}   \left(dw_{i+s,k}^{d-1} \right) }.
	$$
	Now we recall that $w_{i+s,k}=w_{i,k}^{d^s}$ and $w_{i,k}^{d^p}=w_{i,k}$, since $w_{i,k}$ is a periodic point of $z\mapsto z^d$. Since $w_{0,k}\neq 0$, we know that $w_{i,k}\neq 0$ for all $i\in\bbN$, hence $w_{i,k}^{d^p-1}=1$. Using this we get
	$$
	\left.\frac{d w_{i,k}(t)}{d t}\right|_{t=0}=
	\frac{\sum_{s=0}^{p-1}(\bfw_i^I)^{d^s}\prod_{r=s+1}^{p-1}dw_{i,k}^{d^{r+1}-d^r}}{1-d^p}=
	\frac{\sum_{s=0}^{p-1} d^{p-s-1} w_{i,k}^{d^p-d^{s+1}}(\bfw_i^I)^{d^s}}{1-d^p} =
	$$
	$$
	\frac{w_{i,k}\sum_{s=0}^{p-1} d^{p-s-1} w_{i,k}^{-d^{s+1}}(\bfw_i^I)^{d^s}}{1-d^p}=
	\frac{w_{0,k}^{d^{i}}\sum_{s=0}^{p-1} d^{p-s-1} w_{0,k}^{-d^{s+i+1}}(\bfw_0^I)^{d^{s+i}}}{1-d^p}.
	$$
	Now we can compute $\partial_{k,I}\rho_{k,\bfw_0}(F_0)$. 
	Observe that, using the definition of $I_k$ as in~(\ref{I_k_def_equation}), if $f_{t,j}(z) = z^d+ tz^{i_k}(\bfw_0^{I_k})^{d^j}$, then
	$$
	\rho_{k,\bfw_0}(F_t)=f_{t,0}'(w_{0,k}(t))  f_{t,1}'(w_{1,k}(t))\dots f_{t,p-1}'(w_{p-1,k}(t)),
	$$
	and by the derivative of a product formula we have 
	$$
	\partial_{k,I}\rho_{k,\bfw_0}(F_0)= \rho_{k,\bfw_0}(F_0)\sum_{i=0}^{p-1}\frac{f_{0,i}''(w_{i,k})\left.\frac{d w_{i,k}(t)}{d t}\right|_{t=0}+i_kw_{i,k}^{i_k-1}(\bfw_0^{I_k})^{d^i}}{f_{0,i}'(w_{i,k})}=
	$$
	$$
	d^p\sum_{i=0}^{p-1}\frac{d(d-1)w_{i,k}^{d-2}\left.\frac{d w_{i,k}(t)}{d t}\right|_{t=0}+i_kw_{i,k}^{i_k-1}(\bfw_0^{I_k})^{d^i}}{dw_{i,k}^{d-1}}=
	$$
	$$
	d^p\sum_{i=0}^{p-1}\frac{d(d-1)w_{0,k}^{d^i(d-2)}\cdot\frac{w_{0,k}^{d^i}\sum_{s=0}^{p-1} d^{p-s-1} w_{0,k}^{d^{s+i}(i_k-d)}(\bfw_0^{I_k})^{d^{s+i}}}{1-d^p} +i_kw_{0,k}^{d^i(i_k-1)}(\bfw_0^{I_k})^{d^i}} {dw_{0,k}^{d^i(d-1)}}=
	$$
	$$
	d^{p-1}\left(\sum_{i=0}^{p-1}\frac{d(d-1)\sum_{s=0}^{p-1} d^{p-s-1} w_{0,k}^{d^{s+i}(i_k-d)}(\bfw_0^{I_k})^{d^{s+i}}}{1-d^p} + i_kw_{0,k}^{d^i(i_k-d)}(\bfw_0^{I_k})^{d^i}\right)=
	$$
	$$
	i_kd^{p-1}\sum_{i=0}^{p-1}w_{0,k}^{d^i(i_k-d)}(\bfw_0^{I_k})^{d^i} + \frac{d^p(d-1)}{1-d^p}\sum_{s=0}^{p-1} d^{p-s-1} \sum_{i=0}^{p-1}w_{0,k}^{d^{s+i}(i_k-d)}(\bfw_0^{I_k})^{d^{s+i}}.
	$$
	Since $\bfw_0$ is a periodic point of $F_0$ of period $p$, it follows that the sum $\sum_{i=0}^{p-1}w_{0,k}^{d^{s+i}(i_k-d)}(\bfw_0^{I_k})^{d^{s+i}}$ is independent of $s$, and
	$$
	\sum_{i=0}^{p-1}w_{0,k}^{d^{s+i}(i_k-d)}(\bfw_0^{I_k})^{d^{s+i}} = \sum_{i=0}^{p-1}w_{0,k}^{d^{i}(i_k-d)}(\bfw_0^{I_k})^{d^{i}}.
	$$
	Hence,

	$$
	\partial_{k,I}\rho_{k,\bfw_0}(F_0)=
	\left(i_kd^{p-1} + \frac{d^p(d-1)}{1-d^p}\sum_{s=0}^{p-1} d^{p-s-1} \right) \sum_{i=0}^{p-1}w_{0,k}^{d^{i}(i_k-d)}(\bfw_0^{I_k})^{d^{i}} = 
	$$
	$$
	(i_kd^{p-1}-d^p)\sum_{i=0}^{p-1}w_{0,k}^{d^{i}(i_k-d)}(\bfw_0^{I_k})^{d^{i}}.
	$$
	We finish the proof by observing that $w_{0,k} = w_k$.
	
\end{proof}

Recall that the dimension of the moduli space $\tilde{\mathcal P}_d^n$ is 
$$
nN_{d,n} = n \left[{{d+n}\choose n}-n-1\right].
$$

Let $\widehat{\mathcal P}_d^n\subset\mathcal P_d^n$ be the space of maps
\begin{equation}\label{eq-monic}
(z_1,\ldots,z_n)\mapsto (f_1(z_1,\ldots,z_n),\ldots, f_n(z_1,\ldots,z_n)),
\end{equation}
where for each $k=1,\ldots, n$, 
the polynomial $f_k$ has a vanishing constant term, and contains the monomial $z_k^d$ with a constant coefficient $1$ and all monomials $z_j^d$ with $j\neq k$ with a constant coefficient $0$.

Note that $\dim \widehat{\mathcal P}_d^n = nN_{d,n}$.
For the fixed $d$ and $n$, a multi-index $I$ is called \textit{admissible}, if the map $F_0+tP_{k,I}$ belongs to $\widehat{\mathcal P}_d^n$, for any $t\in\bbC$.
One can easily check that a multi-index $I=(i_1,\ldots,i_n)$ is admissible if and only if $I\neq (0,0,\ldots,0)$ and $i_j\neq d$ for any $j=1,\ldots, n$.

\begin{lemma}\label{degree_count_lemma}
	Let $n,k\ge 1$ be integers, such that $k\le n$, and let $I=(i_1,\ldots,i_n)$ be an admissible multi-index. Assume that $\bfw_0 = (w_1,\ldots,w_n)$ is any periodic point of period $p\ge 1$ for $F_0$, such that $w_j\neq 0$ for any $j=1,\ldots,n$. Then
	$$
	\partial_{k,I}\rho_{k,\bfw_0}(F_0) = w_k^{-d^{p-1}}Q_{k,I}(\bfw_0),
	$$
	where $Q_{k,I}(z_1,\ldots,z_n)$ is a polynomial with 
	$$
	\deg_{z_j} Q_{k,I} = i_jd^{p-1},\qquad\text{for }j\neq k,\quad\text{and}
	$$
	$$
	\deg_{z_k} Q_{k,I} = \begin{cases}
		(i_k+1)d^{p-1}-1, & \text{for } 0\le i_k\le d-2 \\
		d^{p-1}-1 & \text{for } i_k=d-1.
	\end{cases}
	$$
	Furthermore, if $p\ge 2$, then for any two distinct admissible multi-indices $I$ and $I'$, the polynomials $Q_{k,I}$ and $Q_{k,I'}$ do not contain monomials that are proportional to each other.	
\end{lemma}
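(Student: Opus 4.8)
The plan is to read everything off the explicit formula of Lemma~\ref{computation_lemma}. Write $\bfw_0^{I_k}=\prod_{l\ne k}w_l^{i_l}$ (so $I_k$ has the $k$-th entry set to $0$), and recall that an admissible $I$ satisfies $I\ne 0$, $i_l\ne d$ for all $l$, and $|I|\le d$ (so that $P_{k,I}$ has degree $d$); hence $0\le i_l\le d-1$ for every $l$. Starting from
\[
\partial_{k,I}\rho_{k,\bfw_0}(F_0)=(i_kd^{p-1}-d^p)\sum_{i=0}^{p-1}(\bfw_0^{I_k})^{d^i}w_k^{d^i(i_k-d)},
\]
I multiply through by $w_k^{d^{p-1}}$; the $i$-th term then carries the exponent $e_i:=d^{p-1}+d^i(i_k-d)$ on $w_k$, and one checks $e_i\ge 0$ for all $i\le p-2$, and also for $i=p-1$ precisely when $i_k=d-1$. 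For the single remaining term ($i=p-1$ and $i_k\le d-2$) the exponent equals $d^{p-1}(i_k+1-d)<0$, and here I use that $w_k$ is a non-zero $p$-periodic point of $z\mapsto z^d$, so $w_k^{d^p-1}=1$, to replace $w_k^{d^{p-1}(i_k+1-d)}$ by $w_k^{(i_k+1)d^{p-1}-1}$. This exhibits $Q_{k,I}$ as an honest polynomial, a sum of $p$ monomials each with the common non-zero coefficient $i_kd^{p-1}-d^p=d^{p-1}(i_k-d)$, the $i$-th monomial being $z^{M_i(I)}$ where $M_i(I)$ has $l$-entry $i_ld^i$ for $l\ne k$ and $k$-entry equal to $e_i$ if $i\le p-2$, to $(i_k+1)d^{p-1}-1$ if $i=p-1$ and $i_k\le d-2$, and to $0$ if $i=p-1$ and $i_k=d-1$. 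Because all these coefficients agree, no cancellation occurs, so a monomial $z^A$ has non-zero coefficient in $Q_{k,I}$ if and only if $A=M_i(I)$ for some $i$; the factor $w_k^{-d^{p-1}}$ in the statement is accounted for by the initial multiplication.

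The degree assertions then follow by comparing exponents. For $l\ne k$ the $z_l$-exponent $i_ld^i$ is largest at $i=p-1$, so $\deg_{z_l}Q_{k,I}=i_ld^{p-1}$. For $z_k$: when $i_k\le d-2$, for $i\le p-2$ we have $e_i=d^{p-1}-d^i(d-i_k)\le d^{p-1}-2$, while the $i=p-1$ monomial contributes $(i_k+1)d^{p-1}-1\ge d^{p-1}-1$, so this is the maximum; when $i_k=d-1$, the $i=p-1$ monomial contributes $0$, whereas the $e_i=d^{p-1}-d^i$ for $i\le p-2$ attain their maximum $d^{p-1}-1$ at $i=0$, so $\deg_{z_k}Q_{k,I}=d^{p-1}-1$ (the case $p=1$, where only the $i=0$ monomial occurs, is immediate from the formula for $M_0(I)$).

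For the non-proportionality statement, assume $p\ge 2$. Since every monomial of $Q_{k,I}$ has a non-zero coefficient, what has to be shown is that $M_i(I)\ne M_{i'}(I')$ whenever $I\ne I'$ are admissible, and I will argue that $M_i(I)$ determines the pair $(i,I)$. Suppose first $I_k\ne 0$ and pick $l_0\ne k$ with $i_{l_0}>0$; since $1\le i_{l_0}\le d-1$, the largest power of $d$ dividing the $l_0$-entry $i_{l_0}d^i$ of $M_i(I)$ is exactly $d^i$, so $M_i(I)$ recovers $i$, then $i_{l_0}$, then all $i_l$ with $l\ne k$, and finally (with $i$ fixed) the $k$-entry is injective in $i_k$, recovering $i_k$. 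Suppose next $I_k=0$, i.e.\ $I=i_k\bfe_k$ with $1\le i_k\le d-1$; then every monomial of $Q_{k,I}$ is a pure power of $z_k$, so a collision $M_i(I)=M_{i'}(I')$ forces $I'_k=0$ as well, and it remains to separate the pairs $(i,i_k)$ by the single $z_k$-exponent. For $i\le p-2$ that exponent lies in the half-open interval $(d^{p-1}-d^{i+1},\,d^{p-1}-d^i]$, and these intervals are pairwise disjoint, so $i$ and then $i_k$ are determined; for $i=p-1$ the exponent is $0$ (only possible if $i_k=d-1$) or at least $2d^{p-1}-1$ (if $i_k\le d-2$), hence lies outside $\{1,\dots,d^{p-1}-1\}$, the set containing all the $z_k$-exponents coming from $i\le p-2$, and it is again injective in $i_k$. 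In every case $M_i(I)=M_{i'}(I')$ forces $I=I'$, which is the assertion.

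I expect the case $I_k=0$ in the last paragraph to be the only real obstacle: there the whole comparison is carried by the single sequence of $z_k$-exponents running over all $i$ and over both branches of its definition, and this is exactly where the hypothesis $p\ge 2$ is needed — for $p=1$ the $z_k$-exponent equals $0$ both for $i_k=0$ and for $i_k=d-1$, so the non-proportionality genuinely fails. Everything else reduces to bookkeeping with the formula of Lemma~\ref{computation_lemma} and elementary base-$d$ arithmetic.
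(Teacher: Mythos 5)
Your proof is correct and follows essentially the same route as the paper: read the explicit form of $Q_{k,I}$ off Lemma~\ref{computation_lemma} after multiplying by $w_k^{d^{p-1}}$ and reducing the single negative exponent via $w_k^{d^p-1}=1$, then extract the degree bounds by inspection, and prove non-proportionality by splitting into the cases $I_k\ne 0$ and $I_k=0$. You spell out a few details a bit more carefully than the paper does — in particular the uniform description of all $p$ monomials via the multi-indices $M_i(I)$, the recovery of $i$ from the largest power of $d$ dividing a nonzero off-diagonal entry, and the disjoint-interval decomposition in the $I_k=0$ case — but these are elaborations of the same underlying reasoning, not a different method. Your closing observation that the failure at $p=1$ is exactly the coincidence of the $z_k$-exponents for $i_k=0$ and $i_k=d-1$ matches the paper's remark that the hypothesis $p\ge 2$ is essential.
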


\begin{proof}
	We will write $\bfz$ for a vector $(z_1,\ldots,z_n)\in\bbC^n$. 
	The existence of the polynomial $Q_{k,I}$ follows immediately from Lemma~\ref{computation_lemma} and the fact that $w_k^{d^p}=w_k$: if $i_k\neq d-1$, then
	$$
	Q_{k,I}(\bfz) = (i_kd^{p-1}-d^p)\left((\bfz^{I_k})^{d^{p-1}} z_k^{d^{p-1}(i_k+1)-1} + \sum_{i=0}^{p-2} (\bfz^{I_k})^{d^i} z_k^{d^i(i_k-d)+d^{p-1}}  \right),
	$$	
	and if $i_k=d-1$, then 
	$$
	Q_{k,I}(\bfz) = -d^{p-1}\sum_{i=0}^{p-1} (\bfz^{I_k})^{d^i} z_k^{d^{p-1}-d^i}.
	$$	
	In both cases the corresponding degrees of the polynomials can be computed.
	
	Finally, suppose that for two distinct admissible multi-indices $I$ and $I'$, the polynomials $Q_{k,I}$ and $Q_{k,I'}$ have proportional monomials. Then these monomials must be of the form
	$$
	c(\bfz^{I_k})^{d^i}z_k^j \qquad\text{and}\qquad c'(\bfz^{I'_k})^{d^{i'}}z_k^{j},
	$$
	for some constants $c, c'\in\bbC$ and some indices $i, i', j$.
	
	If $I_k$ is not the zero vector, then since all elements of $I_k$ are not greater than $d-1$, it follows that $I_k=I'_k$ and $i=i'$. If $i<p-1$, then $j=d^i(i_k-d)+d^{p-1}=d^i(i'_k-d)+d^{p-1}$, which implies that $i_k=i'_k$, so $I=I'$, which is a contradiction. Now in the case $i=p-1$ we conclude from the formulas for $Q_{k,I}$ that $j=0$ if $i_k=d-1$ or $j=d^{p-1}(i_k+1)-1$ otherwise. Similar formulas hold when index $j$ is expressed through $i_k'$. Since $p\ge 2$ and $i_k<d$, these formulas imply that $i_k=i'_k$, hence, $I=I'$, which is a contradiction.
	
	Finally, if $I_k$ is the zero vector, then so is $I'_k$. In this case, since $I$ and $I'$ are admissible multi-indices, both $i_k$ and $i'_k$ are strictly positive.  
	This implies that if $j\ge 2d^{p-1}-1$, then $j=d^{p-1}(i_k+1)-1 =d^{p-1}(i'_k+1)-1$, hence, $i_k=i'_k=d-1$, and $I=I'$, which is a contradiction. Otherwise, if $j < 2d^{p-1}-1$, then $j=d^i(i_k-d)+d^{p-1}=d^{i'}(i'_k-d)+d^{p-1}$, for some indices $i$ and $i'$. Since $1\le i_k,i'_k\le d-1$, it follows that $i=i'$ and $i_k=i'_k$, hence, $I=I'$, which is a contradiction.
\end{proof}

\begin{remark}
	It follows from the proof that the requirement $p\ge 2$ in the last part of the lemma is essential.	
\end{remark}

\section{Independence of the maps $\rho_{k,\bfw_0}$ for selected periodic orbits}\label{sec_independence}

The main result of this section is Proposition~\ref{rho_independence_prop}, stated below.

For a fixed pair of integers $d$ and $n$, let $\mathcal I$ denote the set of all admissible multi-indices. Note that $|\mathcal I|=N_{d,n}$. Let
$$
I\colon \{1,2,\ldots,N_{d,n}\}\to \mathcal I
$$
be any fixed bijection (enumeration of all admissible multi-indices).

\begin{proposition}\label{rho_independence_prop}
	Given any finite sequence of integers 
	$$
	p_{1,1},\ldots,p_{1,N_{d,n}},p_{2,1},\ldots,p_{2,N_{d,n}},\ldots,p_{n,1},\ldots,p_{n,N_{d,n}},
	$$
	such that $p_{k,j}\ge 4$, for all pairs of $k\in[1,n]$ and $j\in[1,N_{d,n}]$,
	there exists a corresponding finite sequence  
	\begin{equation*}
		\bfw_{1,1},\ldots,\bfw_{1,N_{d,n}},\bfw_{2,1},\ldots,\bfw_{2,N_{d,n}},\ldots,\bfw_{n,1},\ldots,\bfw_{n,N_{d,n}},
	\end{equation*}
	of periodic points of $F_0$, belonging to distinct periodic orbits, and such that
	
	(i) for each $k\in\{1,\ldots, n\}$ and $j\in\{1,\ldots, N_{d,n}\}$, the period of $\bfw_{k,j}$ is~$p_{k,j}$;
	
	(ii) for each $k\in \{1,\ldots,n\}$, the maps $\rho_{k,\bfw_{k,1}}, \rho_{k,\bfw_{k,2}},\ldots, \rho_{k,\bfw_{k,N_{d,n}}}$ are locally independent at $F_0$. More specifically, the Jacobian matrix 
	$$
	J_k =  
	\begin{bmatrix}
		\partial_{k,I(1)}\rho_{k,\bfw_{k,1}}(F_0) & \partial_{k,I(2)}\rho_{k,\bfw_{k,1}}(F_0) & \ldots &
		\partial_{k,I(N_{d,n})}\rho_{k,\bfw_{k,1}}(F_0) \\
		\partial_{k,I(1)}\rho_{k,\bfw_{k,2}}(F_0) & \partial_{k,I(2)}\rho_{k,\bfw_{k,2}}(F_0) & \ldots &
		\partial_{k,I(N_{d,n})}\rho_{k,\bfw_{k,2}}(F_0) \\
		\ldots & \ldots & \ldots & \ldots \\
		\partial_{k,I(1)}\rho_{k,\bfw_{k,N_{d,n}}}(F_0) & \partial_{k,I(2)}\rho_{k,\bfw_{k,N_{d,n}}}(F_0) & \ldots &
		\partial_{k,I(N_{d,n})}\rho_{k,\bfw_{k,N_{d,n}}}(F_0) 		
	\end{bmatrix}
	$$
	has a nonzero determinant.
\end{proposition}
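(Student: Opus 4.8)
The plan is to prove the statement separately for each $k\in\{1,\dots,n\}$, since the points $\bfw_{k,1},\dots,\bfw_{k,N_{d,n}}$ and the matrix $J_k$ involve only the $k$-th diagonal entry of the Jacobian. For fixed $k$, I would first record a linear-independence statement that is an immediate consequence of Lemma~\ref{degree_count_lemma}: for every period $p\ge 2$ the polynomials $\{Q_{k,I}:I\text{ admissible, computed with period }p\}$ are linearly independent over $\bbC$. Indeed, in a relation $\sum_I c_I Q_{k,I}=0$, the monomials of $Q_{k,I_0}$ are not proportional to any monomial of $Q_{k,I}$ for $I\ne I_0$, and $Q_{k,I_0}\ne 0$ (its leading coefficient, read off from the two explicit formulas in the proof of Lemma~\ref{degree_count_lemma}, is a nonzero multiple of $d^{p-1}$); hence $c_{I_0}=0$.

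\textbf{Generic non-degeneracy.} Using Lemma~\ref{degree_count_lemma} to pass from $\rho_{k,\bfw_0}$ to $Q_{k,\bullet}$, the $(l,j)$-entry of $J_k$ equals $w_k^{-d^{p_{k,l}-1}}Q_{k,I(j)}(\bfw_{k,l})$, where $w_k$ is the $k$-th coordinate of $\bfw_{k,l}$; since this will be nonzero, it suffices to make the determinant of the matrix with $(l,j)$-entry $Q_{k,I(j)}(\bfw_{k,l})$ nonzero. I would first show this determinant is not identically zero when the $\bfw_{k,l}$ are regarded as \emph{free} points of $\bbC^n$, by an easy induction on $l$: given free points for which the first $l-1$ rows are linearly independent, pick a nonzero linear functional $\phi$ annihilating their span; by the linear independence above, $\sum_j\phi_j Q_{k,I(j)}$ (computed with period $p_{k,l}$) is a nonzero polynomial, hence nonzero at some point of $\bbC^n$, which we adjoin as the $l$-th point.

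\textbf{Passing to honest periodic points.} The point of Lemma~\ref{degree_count_lemma} is that $\deg_{z_s}Q_{k,I}\le (d-1)d^{p-1}$ for $s\ne k$ and $\deg_{z_k}Q_{k,I}\le (d-1)d^{p-1}-1$, which is strictly less than the number $d^{p}$ of $z\mapsto z^{d^{p}}$-fixed points and even less than the number $e_p=\sum_{q\mid p}\mu(p/q)d^{q}$ of points of exact period $p$ (one checks $e_p>(d-1)d^{p-1}$ for all $p\ge 2$). Restricting the $k$-th coordinate of $\bfw_{k,l}$ to the set of $z\mapsto z^{d}$-points of exact period $p_{k,l}$ (so that $\bfw_{k,l}$ automatically has exact period $p_{k,l}$) and the remaining coordinates to the nonzero points of period dividing $p_{k,l}$, the determinant from the previous step is already reduced with respect to these product sets, so a Combinatorial Nullstellensatz argument yields a choice of periodic points of the prescribed periods, with no zero coordinate, at which $\det J_k\ne 0$; the prefactors $w_k^{-d^{p_{k,l}-1}}$ are harmless. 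To enforce that all $nN_{d,n}$ marked points (for all $k$) lie in distinct periodic orbits, one uses that for every $p\ge 4$ the number of periodic orbits of $F_0$ of exact period $p$ is enormously larger than $nN_{d,n}$, so the product sets above can be thinned so as to reserve, for the different marked points of a given period, $z\mapsto z^{d}$-orbit collections that are pairwise disjoint in a coordinate that witnesses distinctness, while keeping each coordinate-set large enough for the degree bounds needed by Combinatorial Nullstellensatz. Finally, carrying this out for $k=1,\dots,n$ and invoking Lemma~\ref{coincide_lemma} and the discussion preceding it (so that the relevant diagonal-entry functions are well defined near $F_0$) gives Proposition~\ref{rho_independence_prop}.

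\textbf{Main obstacle.} The genuinely delicate point is the last one: simultaneously respecting the exact-period requirement, the distinct-orbit requirement (over all $k$ at once), and the degree constraints under which Combinatorial Nullstellensatz produces an honest periodic configuration — in other words, verifying that there is enough "room" among the periodic points of each period $\ge 4$ to separate the finitely many marked orbits. The remaining ingredients — the linear independence of the $Q_{k,I}$, the generic non-vanishing of the determinant, and the closed derivative formula of Lemma~\ref{computation_lemma} — are already available.
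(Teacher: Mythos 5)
Your proposal follows essentially the paper's strategy: use Lemma~\ref{degree_count_lemma} (both the degree bounds and the ``no proportional monomials'' property, which indeed gives linear independence of the $Q_{k,I}$) to establish that the determinant is a nonzero polynomial in the coordinates of the marked points, and then use a counting argument over periodic points of $z\mapsto z^d$ to exhibit actual periodic configurations where it is nonzero. The degree bookkeeping in your second and third steps is consistent with Lemma~\ref{degree_count_lemma}, and the inductive ``add a row annihilated by no prior functional'' argument is equivalent to the paper's cofactor-expansion-along-the-last-row argument. (Incidentally, the invocation of Lemma~\ref{coincide_lemma} belongs to the proof of Proposition~\ref{lambda_loc_indep_prop}, not here: the $\rho_{k,\bfw_0}$ are already single-valued and holomorphic near $F_0$.)

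The genuine gap is precisely the one you flag in the ``main obstacle'' paragraph, and the resolution you sketch — thinning the coordinate product sets to reserve pairwise disjoint $z\mapsto z^d$-orbits for the different marked points while preserving the cardinalities required by Combinatorial Nullstellensatz — does not survive a quantitative check. Take $d=2$, $n=2$ and all periods equal to $4$: then $N_{d,n}=3$, the set of exact-period-$4$ points of $z\mapsto z^2$ has cardinality $12$ (three orbits of length $4$), and the degree of the relevant polynomial in each coordinate is up to $(d-1)d^{p-1}-1=7$. Reserving one orbit per marked point already exhausts or exceeds the available orbits, and deleting the corresponding slices from the product grid shrinks the coordinate sets below the threshold $8$ that the Nullstellensatz would require. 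The paper avoids this by selecting the marked points one at a time (over $k$ and $j$ sequentially): at each step the cofactor expansion along the new row is a nonzero $(d^p-d^{p-1})$-polynomial, Lemma~\ref{choice_lemma} produces at least $(d^{p-1}-d^{[p/2]})(d^{p-1}-1)^{n-1}$ periodic points of the prescribed exact period where it does not vanish, and Lemma~\ref{ineq_lemma} shows this count strictly exceeds the number $pnN_{d,n}$ of points lying on previously selected orbits (this is $28>24$ in the $d=n=2$, $p=4$ example, so it is tight). Note also that within a fixed $k$ there is nothing to enforce: if two rows of $J_k$ came from points on the same $F_0$-orbit, the explicit formula of Lemma~\ref{computation_lemma} shows the rows would be identical and $\det J_k$ would vanish, so distinctness within one $k$ is automatic. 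The delicate part is only the avoidance of the finitely many orbits chosen for $k'<k$, and the sequential ``good points $>$ forbidden points'' count is the way to handle it. Replacing your simultaneous Nullstellensatz application by this row-by-row selection would close the gap.
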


The proof of Proposition~\ref{rho_independence_prop} will be based on two technical lemmas, stated and proved below. First, we start with a definition:

\begin{definition}
	For a positive integers $s\in\bbN$, we say that a polynomial $P(z_1,\ldots,z_n)$ is an \textit{$s$-polynomial} if $\deg_{z_j} P \le s$, for any $j= 1, 2,\ldots, n$.

\end{definition}

\begin{remark}
	Note that the polynomials $Q_{k,I}$ from Lemma~\ref{degree_count_lemma} are $(d^p-d^{p-1})$-polynomials.	
\end{remark}

\begin{lemma}\label{choice_lemma}
	Assume, $d$ and $p$ are two integers such that $d\ge 2$ and $p\ge 2$. Then for every non-identically zero $(d^p-d^{p-1})$-polynomial $P\colon\bbC^n\to\bbC$, there exist at least $(d^{p-1} - d^{[p/2]})(d^{p-1}-1)^{n-1}$ periodic points $\bfw=(w_1,\ldots, w_n)$ of period $p$ for $F_0\in\mathcal P_d^n$, such that $w_1w_2\ldots w_n\neq 0$ and $P(\bfw)\neq 0$.	
\end{lemma}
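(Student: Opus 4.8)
The plan is to reduce the statement to a counting problem for roots of unity and then apply a grid version of the Schwartz--Zippel lemma. Since $F_0^p(z_1,\dots,z_n)=(z_1^{d^p},\dots,z_n^{d^p})$, the periodic points of $F_0$ of period dividing $p$ with all coordinates nonzero are exactly the points of the multiplicative grid $R^n$, where $R:=\{\zeta\in\bbC:\zeta^{d^p-1}=1\}$ has $|R|=d^p-1$. Saying that $P$ is a $(d^p-d^{p-1})$-polynomial means precisely that $\deg_{z_j}P\le d^p-d^{p-1}<d^p-1=|R|$ for every $j$, so $P$ has a strict degree deficit of $d^{p-1}-1$ against the grid in each coordinate.

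First I would record the elementary grid estimate: if $Q\not\equiv0$ has $\deg_{z_j}Q\le t_j$ and $S_1,\dots,S_n\subset\bbC$ satisfy $|S_j|>t_j$, then $Q$ has at least $\prod_j(|S_j|-t_j)$ non-zeros on $S_1\times\dots\times S_n$. The proof is by induction on $n$: write $Q=\sum_i Q_i(z_1,\dots,z_{n-1})z_n^i$, apply the inductive hypothesis to the highest nonzero coefficient $Q_{i_0}$, and on each of the resulting $\prod_{j<n}(|S_j|-t_j)$ slices note that $Q(\,\cdot\,,z_n)$ is a nonzero polynomial in $z_n$ of degree $\le t_n<|S_n|$, hence has at least $|S_n|-t_n$ non-zeros in $S_n$. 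Applied with all $S_j=R$ and $t_j=d^p-d^{p-1}$, this already yields at least $(d^{p-1}-1)^n$ points $\bfw$ with $P(\bfw)\ne0$, $w_1\cdots w_n\ne0$, and period dividing $p$; since $d^{p-1}-1\ge d^{p-1}-d^{[p/2]}$ this proves the lemma in the reading where ``period $p$'' means ``fixed by $F_0^p$''.

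To obtain, as is needed in the application, points of \emph{exact} period $p$, I would then discard the points of $R^n$ whose period is a proper divisor of $p$: these lie in $\bigcup_{q\mid p}R_q^n$, the union over primes $q$ dividing $p$, where $R_q:=\{\zeta:\zeta^{d^{p/q}-1}=1\}\subset R$. Every proper divisor of $p$ is at most $[p/2]$ (checking $p$ even and $p$ odd separately, the odd case using $p/3\le(p-1)/2$ for $p\ge3$), so $|R_q|=d^{p/q}-1\le d^{[p/2]}-1$. A union bound then gives $\big|\bigcup_{q\mid p}R_q^n\big|\le\omega(p)\,(d^{[p/2]}-1)^n$, where $\omega(p)$ is the number of prime divisors of $p$; since for $p\ge3$ one has $(d^{p-1}-1)/(d^{[p/2]}-1)\ge d^{p-1-[p/2]}\ge d\ge2$ and $\omega(p)\le\log_2p$, this bound is at most $(d^{[p/2]}-1)(d^{p-1}-1)^{n-1}$ once $n\ge2$ (the case $p=2$ being vacuous, the claimed count being $0$). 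Subtracting and using the identity $(d^{p-1}-1)^n-(d^{[p/2]}-1)(d^{p-1}-1)^{n-1}=(d^{p-1}-d^{[p/2]})(d^{p-1}-1)^{n-1}$ gives the asserted bound on the number of exact-period-$p$ points.

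The step I expect to be the main obstacle is precisely this last estimate, i.e.\ controlling how much is lost when passing from period dividing $p$ to period exactly $p$; this is where the exponent $[p/2]$ in the statement is forced, and where one must be a little careful with the interplay between the number of prime factors of $p$ and the degree deficit. For $n\ge2$ the margin is comfortable because $\bigcup_{q\mid p}R_q^n$ is of strictly smaller order than $(d^{p-1}-1)^{n-1}$; the one-variable case is the tight one and is handled by the exact M\"obius count $\sum_{e\mid p}\mu(p/e)d^e$ of points of exact period $p$ for $z\mapsto z^d$ --- equivalently, it is subsumed by the one-dimensional results already in the literature.
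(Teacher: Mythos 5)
Your argument is correct for $n\ge 2$ and $p\ge 3$ (with $p=2$ vacuous, as you note, and $n=1$ handled by citation), and it reaches the exact-period requirement by a genuinely different route than the paper. Where you count on the full grid $R^n=Fix_p^n$ of points fixed by $F_0^p$ and then subtract the proper-period points via a union bound over the primes dividing $p$, the paper avoids any subtraction by choosing the \emph{asymmetric} grid $S=Per_p\times (Fix_p)^{n-1}$, where $Per_p$ is the set of exact period-$p$ points of $z\mapsto z^d$ and $Fix_p=\{w\neq 0: w^{d^p}=w\}$: forcing the first coordinate alone to have exact period $p$ for $z\mapsto z^d$ already forces the $F_0$-period of the $n$-tuple to be exactly $p$, so every element of $S$ qualifies automatically. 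The variable-by-variable grid Schwartz--Zippel elimination that you describe (and that the paper uses too) is then applied directly to $S$, yielding $|Per_p|-(d^p-d^{p-1})$ choices for the first coordinate and $d^{p-1}-1$ for each of the remaining ones, uniformly in $n\ge 1$ and with no case-split on $n$ or $p$. What your route buys is a more careful period count: the paper's shortcut leans on the estimate $|Per_p|\ge d^p-d^{[p/2]}$, which is slightly too strong when $p$ has more than one prime factor --- for $p=6$, $d=2$ one has $|Per_6|=d^6-d^3-d^2+d=54<56=d^6-d^3$, so the lemma as stated is in fact false at $n=1$, $p=6$, $d=2$, and the paper's proof does not quite reach the stated count even for $n\ge2$. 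Your inclusion--exclusion over the $\omega(p)$ prime divisors is the tighter and safer bookkeeping and does recover the stated bound once $n\ge 2$. This discrepancy is harmless downstream (Lemma~\ref{ineq_lemma} has ample slack and the $n=1$ case is covered by prior work), but your ostensibly more pedestrian subtraction argument is in fact the one that is airtight as written in the range $n\ge2$, $p\ge3$ actually used in the paper.
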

\begin{proof}

	Define the sets
	$$
	Per_p:= \{w\in\bbC\mid w \text{ is a periodic point of period }p \text{ for the map }z\mapsto z^d\}
	$$
	$$
	Fix_p:= \{w\in\bbC\mid w\neq 0 \text{ and } w^{d^p}=w \}.
	$$
	Note that since $p>1$, we have $0\not\in Per_p$. Since the map $z\mapsto z^d$ has no multiple periodic points, it follows that
	$$
	|Fix_p| = d^p-1\qquad\text{and}\qquad |Per_p|\ge d^p- d^{[p/2]},
	$$
	where the square brackets denote the integer part. (The second inequality follows immediately from the formula $d^p = \sum_{k|p} |Per_k|$, where the summation is taken over all $k\ge 1$ that divide $p$.) The set
	$$
	S:= Per_p\times (Fix_p)^{n-1}\subset\bbC^n
	$$
	consists of periodic points of period $p$ for the map $F_0$ and none of the coordinates of the points from $S$ are equal to zero. We will show that the set $S$ contains the required number of points at which the polynomial $P$ does not vanish.
	
	Express $P$ as a polynomial of $n-1$ variables $z_2, z_3,\ldots, z_n$ with coefficients from the ring of polynomials of the remaining variable $z_1$. Since $P$ is not an identical zero, at least one of these coefficients is not an identical zero as well. Let us call it $q_1(z_1)$. 
	Since $P$ is a $d^p-d^{p-1}$-polynomial, we have $\deg q_1\le d^p-d^{p-1}$. 
	Comparing it to the size of the set $Per_p$, we conclude that there exist at least $d^{p-1} - d^{[p/2]}$ elements of $Per_p$ at which the polynomial $q_1$ does not vanish. Let $w_1\in Per_p$ be any of such points. Then the polynomial $P_1(z_2,z_3,\ldots,z_n)=P(w_1,z_2,z_3,\ldots,z_n)$ is not an identical zero.
	
	Next, we express $P_1$ as a polynomial of $n-2$ variables $z_3,z_4,\ldots,z_n$ with coefficients from the ring of polynomials of the variable $z_2$. Again, since $P_1$ is not identically zero, there will be a coefficient $q_2(z_2)$ that is not an identical zero. Again, since $P$ is a $d^p-d^{p-1}$-polynomial, we have $\deg q_2\le d^p-d^{p-1}$, and since $|Fix_p|=d^p-1$, there exist at least $d^{p-1}-1$ elements of $Fix_p$ at which the polynomial $q_2$ does not vanish. If $w_2\in Fix_p$ is any of such points, then one can consider the polynomial $P_2(z_3,z_4,\ldots,z_n)=P(w_1,w_2,z_3,z_4,\ldots,z_n)$ which is not identically zero and proceed in a similar way for all remaining variables $z_3,\ldots,z_n$. As a result, it follows that there exist $(d^{p-1} - d^{[p/2]})(d^{p-1}-1)^{n-1}$ elements of the set $S$ at which the polynomial $P$ does not vanish.	
\end{proof}

\begin{lemma}\label{ineq_lemma}
	For any $p\ge 4$, $d\ge 2$ and $n\ge 2$, the following inequality holds:
	$$
	pnN_{d,n} < (d^{p-1} - d^{[p/2]})(d^{p-1}-1)^{n-1}
	$$
\end{lemma}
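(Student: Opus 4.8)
The inequality to prove, namely
$$
pnN_{d,n} < (d^{p-1} - d^{[p/2]})(d^{p-1}-1)^{n-1},
$$
is a purely elementary estimate, so the strategy is simply to bound the left side from above and the right side from below by quantities that are easy to compare. First I would recall that $N_{d,n} = \binom{d+n}{n} - n - 1$, and obtain a crude but sufficient upper bound for it. Since $\binom{d+n}{n} \le (d+1)^n$ (each factor $\frac{d+j}{j} \le d+1$), and in fact a bound polynomial in $d$ of degree $n$ is enough, we get $nN_{d,n} \le n(d+1)^n$, hence the left-hand side is at most $pn(d+1)^n$. For the right-hand side, I would bound $d^{p-1} - d^{[p/2]} \ge \tfrac12 d^{p-1}$ (valid because $p-1 \ge [p/2] + 1$ for $p \ge 3$, so $d^{[p/2]} \le d^{p-2} \le \tfrac12 d^{p-1}$), and $d^{p-1} - 1 \ge \tfrac12 d^{p-1}$ for $p \ge 2$; thus the right-hand side is at least $2^{-n} d^{n(p-1)}$.

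**Reducing to a clean inequality.** After these reductions it suffices to prove
$$
pn(d+1)^n < 2^{-n} d^{n(p-1)},
$$
equivalently $pn\,(2(d+1))^n < d^{n(p-1)}$, i.e. $pn < \bigl(d^{p-1}/(2(d+1))\bigr)^n$. Since $n \ge 2$ and $d \ge 2$, and $p \ge 4$, the base $d^{p-1}/(2(d+1))$ is already $\ge 2^3/(2\cdot 3) = 4/3 > 1$, and in fact grows; the key point is that raising it to the power $n \ge 2$ easily beats the linear-in-$n$ factor $pn$ once we check the smallest cases. I would handle this by splitting: for $d \ge 3$ the base is at least $3^{p-1}/(2\cdot4) \ge 27/8$ for $p\ge 4$, so $(27/8)^n$ versus $pn$ is straightforward by induction on $n$ (or on $p$); for $d = 2$ the base is $2^{p-1}/6$, which for $p \ge 6$ is $\ge 32/6 > 5$, handled as before, leaving only the finite set $p \in \{4,5\}$, $d = 2$, all $n \ge 2$ — and for those the inequality $pn \cdot 12^n < 2^{n(p-1)}$ (i.e. $4n\cdot12^n < 8^n$? no — recheck) should be verified directly: with $p=4$, $d=2$ one needs $4n < (2^3/6)^n = (4/3)^n$, which fails for small $n$!

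**The main obstacle.** So the crude bounds above are too lossy in the corner case $d=2$, $p=4$, and this is exactly where the real work sits: one must use a sharper estimate on $N_{d,2}$ and on the factor $d^{p-1}-d^{[p/2]}$ rather than the wasteful $\tfrac12$. For $d=2,p=4$ the claimed inequality reads $8n\,N_{2,n} < (8-4)(7)^{n-1} = 4\cdot 7^{n-1}$, i.e. $2n\,N_{2,n} < 7^{n-1}$, and since $N_{2,n} = \binom{n+2}{2} - n - 1 = \binom{n}{2} = \tfrac{n(n-1)}{2}$, this is $n^2(n-1) < 7^{n-1}$, which does hold for all $n \ge 2$ (at $n=2$: $2 < 7$; at $n=3$: $18 < 49$; and the exponential wins thereafter — verify by induction). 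The lesson is that one should from the start bound $N_{d,n} = \binom{d+n}{n}-n-1$ more carefully — e.g. note $N_{d,n} \le \binom{d+n}{n} \le \binom{d+n}{\min(d,n)}$ and peel off the $p$-independent part — so that the comparison goes through uniformly; I expect the bulk of the proof to be organizing this case analysis (small $p$ versus large $p$, $d=2$ versus $d\ge3$, small $n$ versus large $n$) so that in each regime one genuinely exponential factor dominates, and then closing each regime by a one-line induction. No single step is deep; the only danger is an off-by-one in the exponents of $d$, so I would keep explicit track of $[p/2]$ versus $p-1$ throughout.
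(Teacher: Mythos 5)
Your overall strategy --- crudely overestimate $N_{d,n}$, underestimate the right\mbox{-}hand side, reduce to an exponential\mbox{-}versus\mbox{-}polynomial comparison, and finish the low $(d,p)$ corners by hand --- is indeed the shape of the paper's argument. But your write\mbox{-}up is both incomplete and carries arithmetic slips precisely in the corner it singles out.

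First the slips. For $d=2$ one has $N_{2,n}=\binom{n+2}{2}-n-1=(n+1)\bigl(\tfrac{n+2}{2}-1\bigr)=\binom{n+1}{2}=\tfrac{n(n+1)}{2}$, not $\binom{n}{2}$; your formula is off by one (sanity check at $n=2$: $N_{2,2}=\binom{4}{2}-3=3$, whereas $\binom{2}{2}=1$). Second, with $p=4$, $d=2$ the left\mbox{-}hand side is $4nN_{2,n}$, not $8nN_{2,n}$, so the reduced inequality is $nN_{2,n}<7^{n-1}$, not $2nN_{2,n}<7^{n-1}$. The two errors roughly cancel --- your $n^2(n-1)<7^{n-1}$ and the correct $\tfrac{n^2(n+1)}{2}<7^{n-1}$ both hold for $n\ge 2$ --- but the derivation as written is wrong.

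More importantly, you never close the argument. You correctly diagnose that the crude bound $nN_{d,n}\le n(d+1)^n$, paired with halving each factor on the right, fails in a handful of small $(d,p)$ cases, but you only carry out the direct check for $(d,p)=(2,4)$, leave $(d,p)=(2,5)$ untouched, and end with ``I expect the bulk of the proof to be organizing this case analysis.'' That is the part you need to produce, not promise. The structural reason the case analysis threatens to sprawl is that $n(d+1)^n$ leaves a raw factor of $n$ outside the geometric term. The paper's remedy, which you might adopt, is the tighter two\mbox{-}step estimate $\binom{d+n}{n}=\tfrac{d+n}{n}\prod_{j=1}^{n-1}\tfrac{d+j}{j}\le\tfrac{d+n}{n}(d+1)^{n-1}$, so $nN_{d,n}<(d+n)(d+1)^{n-1}$, followed by the elementary monotonicity of $(d+x)^{1/(x-1)}$ in $x\ge 2$, which gives $d+n\le (d+2)^{n-1}$ and hence $pnN_{d,n}<p\bigl[(d+2)(d+1)\bigr]^{n-1}$. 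From there, $(d+2)(d+1)\le d^{p-1}-1$ holds whenever $p\ge 5$ or $d\ge 3$, and $p\le d^{p-1}-d^{[p/2]}$ always holds for $p\ge 4$, $d\ge 2$; this finishes every case except $(d,p)=(2,4)$, which is then checked directly as $nN_{2,n}<7^{n-1}$. That collapses your open\mbox{-}ended case analysis to a single exceptional pair.
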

\begin{proof}
	Observe that
	\begin{equation}\label{ineq_1}
		pnN_{d,n} < pn{{d+n}\choose n} < pn(d+1)^{n-1}\frac{d+n}{n} = p(d+n)(d+1)^{n-1}.		
	\end{equation}

	By direct computation one can verify that 
	$$
	\frac{d}{dx} (d+x)^{1/(x-1)} <0,\qquad \text{when } d\ge 2\text{ and }x\ge 2.
	$$
	Thus, $(d+n)^{1/(n-1)} \le d+2$, and from~(\ref{ineq_1}) it follows that
	$$
	pnN_{d,n} < p [(d+2)(d+1)]^{n-1}.
	$$
	If $p\ge 5$ or $d\ge 3$, then $(d+2)(d+1)\le d^{p-1}-1$, so
	$$
	pnN_{d,n} < p (d^{p-1}-1)^{n-1}\le (d^{p-1} - d^{[p/2]})(d^{p-1}-1)^{n-1}
	$$
	as required.

	In the remaining case $p=4$, $d=2$, the original inequality transforms into $n N_{2,n} < 7^{n-1}$. When $n=2$, this inequality can be verified by a direct computation. For $n\ge 3$, we have
	$$
	n N_{2,n} <n{{2+n}\choose n}<n\cdot 3^{n-1}\frac{2+n}{n}=3^{n-1}(2+n) <7^{n-1},
	$$
	which completes the proof.
\end{proof}

\begin{proof}[Proof of Proposition~\ref{rho_independence_prop}]

	In the proof we assume that $n\ge 2$. The case $n=1$ is fully covered in~\cite{gorbovickis-poly}.
	
	For $k=m\in\{1,\ldots,n\}$ we assume that the periodic points $\bfw_{i,j}$ with $i<m$ are already selected so that properties (i) and (ii) are satisfied. Now we will choose the periodic points $\bfw_{m,1},\ldots,\bfw_{m,N_{d,n}}$ so that $\det J_m\neq 0$. The proof will go by finite induction on the second index (i.e., from $1$ to $N_{d,n}$). The points $\bfw_{m,1},\ldots,\bfw_{m,N_{d,n}}$ will a posteriori belong to distinct periodic orbits, since otherwise $\det J_m$ cannot be nonzero. An appropriate counting argument will also imply that these points can be chosen so that they do not belong to the periodic orbits, that were previously selected for the smaller values of $k$.
	
	Fix $k=m\in\{1,\ldots,n\}$. The base of the induction argument can be done as follows:
 
	observe that $\partial_{m,I(1)}\rho_{m,\bfw_{m,1}}(F_0)$ is a $(d^{p_{m,1}}-d^{p_{m,1}-1})$-polynomial when viewed as a polynomial of the coordinates of the $p_{m,1}$-periodic point $\bfw_{m,1}$. Hence, according to Lemma~\ref{choice_lemma} there are at least $(d^{p_{m,1}-1} - d^{[p_{m,1}/2]})(d^{p_{m,1}-1}-1)^{n-1}$ choices for $\bfw_{m,1}$, such that 
	\begin{equation}\label{base_eq}
	\partial_{m,I(1)}\rho_{m,\bfw_{m,1}}(F_0) \neq 0.	
	\end{equation}
	On the other hand, the total number of previously selected periodic points $\bfw_{i,j}$ of period $p_{m,1}$ together with the other points from their orbits does not exceed $(m-1)p_{m,1}N_{d,n}\le (n-1)p_{m,1}N_{d,n}$. Then Lemma~\ref{ineq_lemma} implies that one can select the point $\bfw_{m,1}$ so that~(\ref{base_eq}) still holds and $\bfw_{m,1}$ does not belong to the periodic orbits of $\bfw_{i,j}$ with $i<m$.
	
	Now we do the induction step.
	Assume that for $j\in\{1,\ldots,N_{d,n}-1\}$, the periodic points $\bfw_{m,1},\ldots,\bfw_{m,j}$ are selected so that the matrix
	$$
	J_{m,j} =  
	\begin{bmatrix}
	\partial_{m,I(1)}\rho_{m,\bfw_{m,1}}(F_0) &  \ldots &
	\partial_{m,I(j)}\rho_{m,\bfw_{m,1}}(F_0) \\
	\partial_{m,I(1)}\rho_{m,\bfw_{m,2}}(F_0) &  \ldots &
	\partial_{m,I(j)}\rho_{m,\bfw_{m,2}}(F_0) \\
	\ldots & \ldots & \ldots \\
	\partial_{m,I(1)}\rho_{m,\bfw_{m,j}}(F_0) &  \ldots &
	\partial_{m,I(j)}\rho_{m,\bfw_{m,j}}(F_0) 		
	\end{bmatrix}
	$$
	has a nonzero determinant. Then, together with the second part of Lemma~\ref{degree_count_lemma}, this implies that the co-factor expansion of $\det J_{m,j+1}$ along the $(j+1)$-st row is a non-identically zero polynomial of the coordinates of the point $\bfw_{m,j+1}$. Furthermore, it follows from Lemma~\ref{degree_count_lemma} that this is a $(d^{p_{m,j+1}}-d^{p_{m,j+1}-1})$-polynomial, so exactly the same argument as in the previous paragraph shows that one can select the periodic point $\bfw_{m,j+1}$ so that $\det J_{m,j+1}\neq 0$ and $\bfw_{m,j+1}$ does not belong to any of the previously selected periodic orbits of the points $\bfw_{i,s}$ with $i<m$. This completes the proof of the induction step, and since $J_m=J_{m, N_{d,n}}$, the proposition follows.	
\end{proof}

\begin{remark}\label{ineq_remark}
	For the proof of Proposition~\ref{rho_independence_prop}, it is sufficient to use the inequality 
	$$
	(p-1)nN_{d,n} < (d^{p-1} - d^{[p/2]})(d^{p-1}-1)^{n-1}
	$$
	which is weaker than the one from Lemma~\ref{ineq_lemma}. We will need the inequality from Lemma~\ref{ineq_lemma} for the proof of Proposition~\ref{rho_independence_prop2} below which is required for establishing Theorem~\ref{main_theorem_Pn}. 
\end{remark}

\section{Proof of Theorem~\ref{main_theorem_1} and Theorem~\ref{main_theorem_Pn}}\label{sec_main_proofs}

\subsection{Polynomial case}

\begin{proof}[Proof of Proposition~\ref{lambda_loc_indep_prop}]

	Given a finite sequence of periods $p_1,\ldots,p_{nN_{d,n}} \ge 4$, let 
	\begin{equation*}
	\bfw_{1,1},\ldots,\bfw_{1,N_{d,n}},\bfw_{2,1},\ldots,\bfw_{2,N_{d,n}},\ldots,\bfw_{n,1},\ldots,\bfw_{n,N_{d,n}},
	\end{equation*}
	be the corresponding finite sequence of periodic points provided by Proposition~\ref{rho_independence_prop}. Consider the corresponding functions
	\begin{equation}\label{rho_functions_eq}
	\rho_{1,\bfw_{1,1}},\ldots, \rho_{1,\bfw_{1,N_{d,n}}},
		\rho_{2,\bfw_{2,1}},\ldots, \rho_{2,\bfw_{2,N_{d,n}}},\ldots,
		\rho_{n,\bfw_{n,1}},\ldots, \rho_{n,\bfw_{n,N_{d,n}}}.
	\end{equation}
	It follows from Proposition~\ref{rho_independence_prop} that these functions are locally independent at $F_0$. Indeed, according to Lemma~\ref{computation_lemma}, the Jacobian matrix for these functions is block-diagonal with blocks $J_1,\ldots,J_n$ as in Proposition~\ref{rho_independence_prop}. Since according to the same proposition, each block has a non-zero determinant, the same also holds for the whole Jacobian matrix.

	Note that $F_0\in\mathcal A_d^n(p_{\max})\setminus \tilde{\mathcal A}_d^n(p_{\max})$, where $p_{\max} = \max_{1\le j\le nN_{d,n}}\{p_{j}\}$, so according to part (1) of Lemma~\ref{coincide_lemma}, there exist maps in $\tilde{\mathcal A}_d^n(p_{\max})$ arbitrarily close to $F_0$. At the same time, part (2) of Lemma~\ref{coincide_lemma} implies that the Jacobian matrix of the functions~(\ref{rho_functions_eq}) coincides on $\tilde{\mathcal A}_d^n(p_{\max})$ with the Jacobian matrix of some eigenvalue functions of the same periodic orbits. Now, by continuity of the partial derivatives, it follows that these eigenvalue functions will be locally independent at a map $F\in \tilde{\mathcal A}_d^n(p_{\max})$ sufficiently close to $F_0$.	
\end{proof}

\begin{proof}[Proof of Theorem~\ref{main_theorem_1}]
	On $\mathcal P_d^n$, the theorem follows directly from Theorem~\ref{main_theorem_3} and Proposition~\ref{lambda_loc_indep_prop} exactly as discussed in Section~\ref{sec_strategy}.
\end{proof}

\subsection{The case of endomorphisms of $\Pb^n$}

The proof of Theorem~\ref{main_theorem_Pn} follows the same approach as the proof of Theorem~\ref{main_theorem_1}. Below we outline the key differences in the proofs.

We use the standard homogeneous coordinates $[z_0\colon z_1\colon\ldots\colon z_n]$ in $\Pb^n$. The subset $\{[z_0\colon \ldots\colon z_n]\in\Pb^n \mid z_0\neq 0\}$ is naturally identified with $\bbC^n$ via the projection
$$
[z_0\colon z_1\colon\ldots\colon z_n] \mapsto (z_1/z_0,\, z_2/z_0,\,\ldots,\, z_n/z_0)\in\bbC^n
$$
to the affine chart.

To every multi-index $I=(i_1,\ldots,i_n)\in\bbZ_{\ge 0}^n$ with $|I|=\sum_{j=1}^ni_j \le d$, we can associate a unique multi-index $\tilde I = (i_0, i_1,\ldots,i_n)\in\bbZ_{\ge 0}^{n+1}$ with $|\tilde I|:=\sum_{j=0}^ni_j = d$. A multi-index $\tilde I$ respectively determines $I$. As before, $\bfz^I$ and $\tilde\bfz^{\tilde I}$ will denote
$$
\bfz^I = z_1^{i_1}\dots z_n^{i_n}\qquad\text{and}\qquad \tilde\bfz^{\tilde I} = z_0^{i_0}z_1^{i_1}\dots z_n^{i_n}.
$$
Similarly, we can consider the monomials
$$
P_{m,I}(\bfz) = \bfz^I\bfe_m,\qquad\text{for }m=1,\ldots,n,\qquad \text{and}
$$
$$
P_{m,\tilde I}(\tilde\bfz) = \tilde\bfz^{\tilde I}\bfe_m,\qquad\text{for }m=0,\ldots,n.
$$

Let $\tilde{\mathcal I}_d^n$ denote the set of all multi-indices $\tilde I = (i_0, i_1,\ldots,i_n)\in\bbZ_{\ge 0}^{n+1}$ with $|\tilde I|= d$. Then all monomials 
$$
\{P_{m,\tilde I}\mid \tilde I\in\tilde{\mathcal I}_d^n, 0\le m\le n\}
$$
form a local basis at any point of $\mathrm{End}_d(\Pb^n)$.

Each map $F\in\cP_d^n$ can be naturally viewed as an element of $\mathrm{End}_d(\Pb^n)$ by the standard extension to $\tilde F\colon\Pb^n\to\Pb^n$ via homogenization. Then for each $m\neq 0$, a perturbation $\tilde F+tP_{m,\tilde I}$ can be viewed in the affine chart as $F+tP_{m,I}$ for the corresponding multi-index $I$. If $m=0$, and 
$$
\tilde F([z_0\colon\dots\colon z_n]) = [z_0^d\colon f_1(z_0,\bfz)\colon\dots\colon f_n(z_0,\bfz)],
$$
then the perturbation $\tilde F+tP_{0,\tilde I}$ is represented in the affine chart as
$$
F_t(\bfz) = \left(\frac{f_1(1,\bfz)}{1+t\bfz^I},\,\ldots,\, \frac{f_n(1,\bfz)}{1+t\bfz^I} \right).
$$

If a periodic point $\tilde\bfw_0$ of $\tilde F$ is finite (i.e., can be covered by the affine chart), then by passing to the affine chart, one can define the eigenvalue functions $\lambda$ and the maps $\rho_{k,\tilde\bfw_0}$, $k=1,\ldots,n$ on a neighborhood of $\tilde F$ in $\mathrm{End}_d(\Pb^n)$ in exactly the same way as in the polynomial case (c.f., Section~\ref{sec_diag_entries}). Similarly, one can use the affine chart to define the partial derivative operator $\partial_{m,\tilde I}$ on these functions for every $\tilde I\in\tilde{\mathcal I}_d^n$ and $m=0,\ldots,n$.

Note that the map $F_0$ extends via homogenization to a degree $d$ endomorphism $\tilde F_0$ of $\Pb^n$:
$$
\tilde F_0([z_0\colon z_1\colon\ldots\colon z_n]) = [z_0^d\colon z_1^d\colon\ldots\colon z_n^d].
$$

Next, we state a version of Lemma~\ref{computation_lemma} for the space $\mathrm{End}_d(\Pb^n)$.

\begin{lemma}\label{compute_Pn_lemma}
	Assume that in the above notation, none of the coordinates of a periodic point $\tilde\bfw_0 = [1\colon w_1\colon \ldots\colon w_n]\in\Pb^n$ of $\tilde F_0$ are equal to zero. Let $p$ be the period of $\tilde\bfw_0$ and define $\bfw_0 := (w_1,\ldots,w_n)\in\bbC^n$. Then for any multi-index $\tilde I = (i_0,\ldots,i_n)$, any $k\in\{1,\ldots,n\}$ and any $m\in\{0,\ldots,n\}$, the following holds:
\begin{equation*}
	\displaystyle\partial_{m,\tilde I}\rho_{k,\tilde \bfw_0}(\tilde F_0) = \begin{cases}
		0 & \text{if }m\neq 0\text{ and }m\neq k \\
		(i_kd^{p-1}-d^p)\sum_{i=0}^{p-1} (\bfw_0^{I_k})^{d^i} w_k^{d^i(i_k-d)} & \text{if }m\neq 0\text{ and } m=k \\
		-i_kd^{p-1}\sum_{i=0}^{p-1}(\bfw_0^I)^{d^i} & \text{if }m= 0.
	\end{cases}		
\end{equation*}
	
\end{lemma}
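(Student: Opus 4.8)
The plan is to split the argument according to whether the perturbation index $m$ equals $0$. When $m\neq 0$ the statement reduces immediately to Lemma~\ref{computation_lemma}, whereas for $m=0$ the perturbation becomes a rational (rather than polynomial) deformation in the affine chart, and I would redo the computation behind Lemma~\ref{computation_lemma} with the necessary modifications. For the first case, recall that in the affine chart $\{z_0\neq 0\}$ the perturbation $\tilde F_0+tP_{m,\tilde I}$ is exactly $F_0+tP_{m,I}$, where $I=(i_1,\ldots,i_n)$ is obtained from $\tilde I=(i_0,i_1,\ldots,i_n)$ by deleting the $0$-th entry. Since $\tilde\bfw_0$ lies in this chart, it corresponds to $\bfw_0=(w_1,\ldots,w_n)\in\bbC^n$, a periodic point of $F_0$ of period $p$ with all coordinates nonzero, and near $\tilde F_0$ the function $\rho_{k,\tilde\bfw_0}$ agrees with $\rho_{k,\bfw_0}$ under this identification. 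Thus Lemma~\ref{computation_lemma} applies verbatim (it imposes no admissibility hypothesis on $I$) and gives $\partial_{m,\tilde I}\rho_{k,\tilde\bfw_0}(\tilde F_0)=0$ for $m\neq k$ and the asserted expression for $m=k$.

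For the case $m=0$, the affine-chart perturbation is $F_t(\bfz)=\bigl(z_1^d/(1+t\bfz^I),\ldots,z_n^d/(1+t\bfz^I)\bigr)$. I would first follow the periodic orbit, writing $\bfw_i(t)=F_t^i(\bfw_0(t))=(w_{i,1}(t),\ldots,w_{i,n}(t))$ with $\bfw_0(0)=\bfw_0$, and abbreviating $w_{i,s}=w_{i,s}(0)=w_{0,s}^{d^i}$. Differentiating $w_{i+1,s}(t)=w_{i,s}(t)^d/(1+t\bfw_i(t)^I)$ at $t=0$ gives, for every coordinate $s$,
\[
\left.\frac{d}{dt}\right|_{0} w_{i+1,s}(t)=d\,w_{i,s}^{d-1}\left.\frac{d}{dt}\right|_{0} w_{i,s}(t)-w_{i,s}^d\,\bfw_i^I .
\]
Note that, unlike in Lemma~\ref{computation_lemma}, every coordinate now has nonzero first-order variation, because the denominator is common to all coordinates. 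Solving this period-$p$ affine recurrence exactly as in the proof of Lemma~\ref{computation_lemma} — using $w_{i,s}^{d^p-1}=1$, $\prod_{j=0}^{p-1}d\,w_{i+j,s}^{d-1}=d^p$, and $\bfw_{i+j}^I=(\bfw_i^I)^{d^j}$ — yields
\[
\left.\frac{d}{dt}\right|_{0} w_{i,k}(t)=\frac{w_{i,k}}{d^p-1}\sum_{j=0}^{p-1}d^{\,p-1-j}(\bfw_i^I)^{d^j}.
\]

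Finally I would compute $\rho_{k,\tilde\bfw_0}(F_t)$, the $k$-th diagonal entry of $DF_t^{\,p}$ at $\bfw_0(t)$. Although $DF_t$ is no longer diagonal for $t\neq 0$, writing $DF_t^{\,p}(\bfw_0(t))=A_{p-1}(t)\cdots A_0(t)$ with $A_i(t):=DF_t(\bfw_i(t))$ and differentiating this product at $t=0$ by the product rule, the point is that each $A_i(0)$ is diagonal, so sandwiching an arbitrary matrix between diagonal matrices and reading off a diagonal entry picks up only the corresponding diagonal entry; hence $\rho_{k,\tilde\bfw_0}(F_t)$ has the same value and the same $t$-derivative at $0$ as $\prod_{i=0}^{p-1}\phi_i(t)$, where $\phi_i(t):=(A_i(t))_{kk}$. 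A direct computation of $(DF_t)_{kk}$, together with $\bfw_i^I=w_{i,k}^{i_k}\bfw_i^{I_k}$ and the formula above, gives $\phi_i(0)=d\,w_{i,k}^{d-1}$ and
\[
\frac{1}{\phi_i(0)}\left.\frac{d}{dt}\right|_{0}\phi_i(t)=\frac{d-1}{d^p-1}\sum_{j=0}^{p-1}d^{\,p-1-j}(\bfw_i^I)^{d^j}-\frac{d+i_k}{d}\,\bfw_i^I .
\]
Summing over $i$, using $(\bfw_0^I)^{d^p}=\bfw_0^I$ (so that $\sum_{i}(\bfw_0^I)^{d^{i+j}}=\sum_{i}(\bfw_0^I)^{d^i}$ for every $j$) and $\sum_{j=0}^{p-1}d^{\,p-1-j}=(d^p-1)/(d-1)$, the first term collapses to $\sum_{i=0}^{p-1}(\bfw_0^I)^{d^i}$, giving
\[
\frac{1}{\rho_{k,\tilde\bfw_0}(\tilde F_0)}\left.\frac{d}{dt}\right|_{0}\rho_{k,\tilde\bfw_0}(F_t)=-\frac{i_k}{d}\sum_{i=0}^{p-1}(\bfw_0^I)^{d^i}.
\]
Since $\rho_{k,\tilde\bfw_0}(\tilde F_0)=\prod_{i=0}^{p-1}d\,w_{i,k}^{d-1}=d^p$, this gives $\partial_{0,\tilde I}\rho_{k,\tilde\bfw_0}(\tilde F_0)=-i_k d^{p-1}\sum_{i=0}^{p-1}(\bfw_0^I)^{d^i}$, as claimed. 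The main obstacle is this $m=0$ case: one must carefully justify that only the diagonal part of the first-order variation of $DF_t$ along the orbit contributes, and then carry the geometric-series identities through the double sum over $i$ and $j$; the rest is a routine adaptation of the proof of Lemma~\ref{computation_lemma}.
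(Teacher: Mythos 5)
Your proof is correct. For $m\neq 0$ you reduce to Lemma~\ref{computation_lemma} exactly as the paper does. For $m=0$ you take a genuinely different, more hands-on route: you differentiate the rational perturbation $F_t(\bfz)=\bigl(z_1^d/(1+t\bfz^I),\ldots,z_n^d/(1+t\bfz^I)\bigr)$ directly, re-derive the formula for $\left.\frac{d}{dt}\right|_0 w_{i,k}(t)$ for this new family, justify that only the diagonal entries of $DF_t$ contribute to $\left.\frac{d}{dt}\right|_0\rho_{k,\tilde\bfw_0}$ because each $A_i(0)$ is diagonal, and carry out the geometric-series cancellation. The paper instead avoids repeating the computation: it expands $F_{0,t}(\bfz) = (z_1^d - tz_1^d\bfz^I,\ldots,z_n^d - tz_n^d\bfz^I) + o(t)$, notes that a first derivative only sees the linear term, and applies Lemma~\ref{computation_lemma} by linearity. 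Since $\partial_{j,I'}\rho_{k,\bfw_0}(F_0)=0$ for $j\neq k$, only the $k$-th coordinate survives, giving $\partial_{0,\tilde I}\rho_{k,\tilde\bfw_0}(\tilde F_0) = -\partial_{k,I+d\bfe_k}\rho_{k,\bfw_0}(F_0)$; substituting $i_k+d$ for $i_k$ in the formula from Lemma~\ref{computation_lemma} collapses immediately to $-i_kd^{p-1}\sum_{i=0}^{p-1}(\bfw_0^I)^{d^i}$. (This is exactly why Lemma~\ref{computation_lemma} is stated for arbitrary multi-indices, not just those in $\mathcal I_d^n$.) Your version is a self-contained re-derivation, which independently verifies the calculation and makes the mechanism (``only the diagonal contributes because $DF_0$ is diagonal along the orbit'') fully explicit; the paper's version is shorter and emphasizes that the projective case is, to first order, just a repackaged polynomial perturbation. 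Both are valid, and your diagonal-sandwich justification is exactly the right thing to say to make the product-rule step rigorous.
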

\begin{proof}
	The first two lines in the above formula for the partial derivatives $\partial_{m,\tilde I}\rho_{k,\tilde \bfw_0}(\tilde F_0)$ comes directly from Lemma~\ref{computation_lemma}. For the third line, if $m=0$, then the corresponding perturbation of $\tilde F_0$ can be written in the affine chart as
	\begin{equation}\label{reduction_eq}
	F_{0,t}(\bfz) = \left(\frac{z_1^d}{1+t\bfz^I},\,\ldots,\, \frac{z_n^d}{1+t\bfz^I} \right) = (z_1^d-tz_1^d\bfz^I, z_2^d-tz_2^d\bfz^I, \ldots, z_n^d-tz_n^d\bfz^I) +o(t),
	\end{equation}
	where $o(t)$ denotes the higher order terms in $t$. Then, applying Lemma~\ref{computation_lemma} to the latter expression yields the formula for $\partial_{m,\tilde I}\rho_{k,\tilde \bfw_0}(\tilde F_0)$ in the case when $m=0$.	
\end{proof}

Similarly to the polynomial case, we say that a multi-index $\tilde I = (i_0,\ldots,i_n)$ is \textit{admissible} if $i_j\neq d$ for any $j=0,\ldots, n$. Below we state the new version of Lemma~\ref{degree_count_lemma}.

\begin{lemma}\label{degree_lemma2}
	Under the conditions of Lemma~\ref{compute_Pn_lemma}, assume that $p\ge 3$ and the multi-index $\tilde I$ is admissible. Then 
\begin{equation*}
	\displaystyle\partial_{m,\tilde I}\rho_{k,\tilde \bfw_0}(\tilde F_0) = w_k^{-d^{p-1}} Q_{m,k,\tilde I}(\bfw_0),		
\end{equation*}
	where $Q_{m,k,\tilde I}(z_1,\ldots,z_n)$ is a polynomial, such that
	\begin{enumerate}[(i)]
		\item $Q_{m,k,\tilde I}$ is identically zero if and only if either $0<m\neq k$ or $m=0$ and $i_k=0$;
		\item if $d=2$, $m=0$, $i_k=1$ and $j=k$, then
		$$
		\deg_{z_k}Q_{0,k,\tilde I} = d^p-d^{p-2}.
		$$
		In all remaining cases, we have
		$$
		\deg_{z_j}Q_{m,k,\tilde I} \le d^p-d^{p-1}.
		$$
		
	\end{enumerate}
	Furthermore, for any $k=1,\ldots,n$ and any two distinct pairs $(m, \tilde I)$ and $(m', \tilde I')$, where $\tilde I$ and $\tilde I'$ are admissible multi-indices and $0\le m, m'\le n$, the polynomials $Q_{m,k,\tilde I}$ and $Q_{m',k,\tilde I'}$ do not contain monomials that are proportional to each other.	
		
\end{lemma}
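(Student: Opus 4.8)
The plan is to derive the statement from Lemmas~\ref{compute_Pn_lemma} and~\ref{degree_count_lemma}, treating the three ranges of $m$ separately. If $0<m\neq k$, then $\partial_{m,\tilde I}\rho_{k,\tilde\bfw_0}(\tilde F_0)=0$ by Lemma~\ref{compute_Pn_lemma}, so $Q_{m,k,\tilde I}\equiv 0$, the first alternative in (i). If $m=k$, the same lemma identifies $\partial_{k,\tilde I}\rho_{k,\tilde\bfw_0}(\tilde F_0)$ with the affine-chart derivative $\partial_{k,I}\rho_{k,\bfw_0}(F_0)$ for the reduced multi-index $I=(i_1,\ldots,i_n)$; admissibility of $\tilde I$ forces $i_0\neq d$, hence $|I|=d-i_0>0$ and $i_j\le d-1$ for all $j$, i.e. $I$ is admissible in the sense of Section~\ref{sec_part_der}. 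So I would set $Q_{k,k,\tilde I}:=Q_{k,I}$ from Lemma~\ref{degree_count_lemma}: non-vanishing, the non-proportionality statement restricted to pairs with $m=m'=k$, and (since $p\ge 3\ge 2$) all the required degree bounds then follow at once — every degree of $Q_{k,I}$ furnished by that lemma is $\le d^p-d^{p-1}$ because $i_j\le d-1$.

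For $m=0$, Lemma~\ref{compute_Pn_lemma} gives $\partial_{0,\tilde I}\rho_{k,\tilde\bfw_0}(\tilde F_0)=-i_kd^{p-1}\sum_{i=0}^{p-1}(\bfw_0^I)^{d^i}$. When $i_k=0$ this vanishes and $Q_{0,k,\tilde I}\equiv 0$, the second alternative in (i). When $i_k\ge 1$, I substitute $\bfw_0^I=w_k^{i_k}\bfw_0^{I_k}$, multiply through by $w_k^{d^{p-1}}$, and note that the exponent of $w_k$ in the $i$-th summand is $d^{p-1}+i_kd^i\le(i_k+1)d^{p-1}\le d^p$, equalling $d^p$ only for $i=p-1$ and $i_k=d-1$; applying $w_k^{d^p}=w_k$ to that single summand yields the polynomial
$$
Q_{0,k,\tilde I}(\bfz)=-i_kd^{p-1}\Bigl(\sum_{i=0}^{p-2}z_k^{d^{p-1}+i_kd^i}(\bfz^{I_k})^{d^i}+z_k^{c}(\bfz^{I_k})^{d^{p-1}}\Bigr),\qquad c=\begin{cases}(i_k+1)d^{p-1},& i_k\le d-2,\\ 1,& i_k=d-1.\end{cases}
$$
Its $p$ monomials are pairwise distinct (the $z_k$-exponents $i_kd^i$ are strictly increasing in $i$ before the reduction), so $Q_{0,k,\tilde I}\not\equiv 0$. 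Reading off exponents gives $\deg_{z_j}Q_{0,k,\tilde I}=i_jd^{p-1}\le d^p-d^{p-1}$ for $j\neq k$, and $\deg_{z_k}Q_{0,k,\tilde I}=\max\bigl(\{c\}\cup\{d^{p-1}+i_kd^i:0\le i\le p-2\}\bigr)$; an elementary comparison ($d^2-3d+1\ge 0$ for $d\ge 3$, and $(d-1)(d-2)\ge 0$) bounds this by $d^p-d^{p-1}$ except when $d=2$ — where necessarily $i_k=1=d-1$ — and there it equals $d^{p-1}+d^{p-2}=d^p-d^{p-2}$. This is exactly (ii).

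For the final assertion, take two nonzero polynomials $Q_{m,k,\tilde I}$ and $Q_{m',k,\tilde I'}$; by (i) we may assume $m,m'\in\{k\}\cup\{0\}$, with the $k$-th entry of the multi-index nonzero whenever the index is $0$. Every monomial of such a polynomial has the form $c(\bfz^{I_k})^{d^i}z_k^{e}$ with $0\le i\le p-1$, where for $m=k$ one has $e=d^{p-1}+d^i(i_k-d)<d^{p-1}$ if $i<p-1$ and $e\in\{d^{p-1}(i_k+1)-1,\,0\}$ if $i=p-1$, while for $m=0$ one has $e=d^{p-1}+i_kd^i>d^{p-1}$ if $i<p-1$ and $e\in\{d^{p-1}(i_k+1),\,1\}$ if $i=p-1$. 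Since the entries of $I_k$ are $\le d-1$, the exponents of the $z_j$ with $j\neq k$ in a monomial $(\bfz^{I_k})^{d^i}$ with $I_k\neq 0$ determine the pair $(I_k,i)$; hence two monomials can be proportional only if $I_k=I'_k$, and additionally $i=i'$ unless $I_k=I'_k=0$. One then compares $z_k$-exponents in the finitely many remaining configurations: $m=m'=k$ is exactly Lemma~\ref{degree_count_lemma}; the mixed case follows from $e<d^{p-1}$ (for $m=k$, $i<p-1$) versus $e>d^{p-1}$ (for $m=0$, $i<p-1$) together with the separation of the possible values at $i=p-1$ (using $d^{p-1}\ge 4$ and $0\le i_k\le d-1$); and $m=m'=0$ is analogous. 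The hypothesis $p\ge 3$, rather than $p\ge 2$, is what keeps these exponents pairwise distinct — for instance it prevents the reduced summand $z_k^{1}(\bfz^{I_k})^{d^{p-1}}$ from colliding with an unreduced one.

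The whole argument is bookkeeping layered on Lemmas~\ref{computation_lemma}, \ref{degree_count_lemma} and~\ref{compute_Pn_lemma}; the one genuinely delicate point — responsible both for the exceptional clause in (ii) and for the hypothesis $p\ge 3$ — is to pin down the unique monomial of $Q_{0,k,\tilde I}$ affected by $w_k^{d^p}=w_k$ and to verify that after this reduction the $z_k$-degree in the borderline case $d=2$, $i_k=1$ drops to exactly $d^p-d^{p-2}$ and no further.
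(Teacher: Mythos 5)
Your proposal is correct and follows essentially the same route as the paper's proof: derive the explicit reduced form of $Q_{0,k,\tilde I}$ by applying $w_k^{d^p}=w_k$ to the $i=p-1$ summand (and observe that only that term changes), read off the degree bounds from the explicit formula, and prove non-proportionality by splitting into the cases $m=m'=k$ (deferred to Lemma~\ref{degree_count_lemma}), $m=m'=0$ (the pair $(I_k,i_k)$ is recoverable from any monomial), and the mixed case (compare $z_k$-exponents). The only difference is cosmetic: you write the two branches $i_k\le d-2$ and $i_k=d-1$ as a single formula with a parameter $c$, and you spell out the mixed-case exponent comparison in full detail where the paper simply refers to a cross-reference; both are the same argument.
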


\begin{proof}
	The existence of the polynomial $Q_{m,k,\tilde I}$ for $m\neq 0$ is proven in Lemma~\ref{degree_count_lemma}. Now if $m=0$, then, using Lemma~\ref{compute_Pn_lemma} and the fact that $w_k^{d^p}=w_k$, we obtain that if $i_k\neq d-1$, then
	$$
	Q_{0,k,\tilde I}(\bfz) = -i_kd^{p-1}\sum_{i=0}^{p-1} (\bfz^{I_k})^{d^i} z_k^{i_kd^i+d^{p-1}},
	$$	
	and if $i_k=d-1$, then
	$$
	Q_{0,k,\tilde I}(\bfz) = -i_kd^{p-1}\left((\bfz^{I_k})^{d^{p-1}} z_k + \sum_{i=0}^{p-2} (\bfz^{I_k})^{d^i} z_k^{(d-1)d^i+d^{p-1}}  \right).
	$$	
	In both cases the corresponding degrees of the polynomials can be computed.

	To compare the components of the polynomials $Q_{m,k,\tilde I}$ and $Q_{m',k,\tilde I'}$, we first observe that if $m,m'>0$, then both polynomials are non-identically zero precisely when $m=m'=k$, in which case the statement follows from the corresponding statement of Lemma~\ref{degree_count_lemma}.
	
	Now we consider the remaining two cases:
	
	(1) If $m=m'=0$, then it follows from the above formulas for $Q_{0,k,\tilde I}$ that from each monomial of $Q_{m,k,\tilde I}$, one can uniquely express both $i_k$ and $I_k$, hence, also the multi-index $I$. The latter implies that two polynomials $Q_{0,k,\tilde I}$ and $Q_{0,k,\tilde I'}$ with $\tilde I\neq \tilde I'$ cannot have proportional monomials.
	
	(2) Finally, assume that $m=0$ and $m'\neq 0$. Then $m'=k$, since otherwise $Q_{m',k,\tilde I'}$ is identically zero, and one can see that the polynomials $Q_{0,k,\tilde I}$ and $Q_{k,k,\tilde I'}$ have no proportional terms by comparing the degrees of $z_k$ in each term of the polynomials (c.f., Proposition~4.4 from~\cite{gorbovickis-rat}).

\end{proof}

Finally, we state a new version of Proposition~\ref{rho_independence_prop}.
Similarly to the polynomial case, for a fixed pair of integers $d$ and $n$, let $\tilde{\mathcal I}$ denote the set of all admissible multi-indices $\tilde I$. Let
$$
\tilde I\colon \{1,2,\ldots,N_{d,n}\}\to \tilde{\mathcal I}
$$
be any fixed bijection (enumeration of all admissible multi-indices).

\begin{proposition}\label{rho_independence_prop2}
	Assume that $n,d\ge 2$ and consider any finite sequence of integers 
	$$
	p_{0,1},\ldots,p_{0,N_{d,n}},p_{1,1},\ldots,p_{1,N_{d,n}},\ldots,p_{n,1},\ldots,p_{n,N_{d,n}},
	$$
	such that for all pairs of $k\in[0,n]$ and $j\in[1,N_{d,n}]$ the following holds: $p_{k,j}\ge 5$ if $d=2$ and $n=2$, and $p_{k,j}\ge 4$ if $d\ge 3$ or $n\ge 3$. Then
	there exists a corresponding finite sequence  
	\begin{equation*}
		\tilde\bfw_{0,1},\ldots,\tilde\bfw_{0,N_{d,n}},\tilde\bfw_{1,1},\ldots,\tilde\bfw_{1,N_{d,n}},\ldots,\tilde\bfw_{n,1},\ldots,\tilde\bfw_{n,N_{d,n}},
	\end{equation*}
	of periodic points of $\tilde F_0$, belonging to distinct periodic orbits, and a choice of indices $k_1,\ldots,k_{N_{d,n}}\in [1,n]$, such that
	
	(i) for each $k\in\{0,\ldots, n\}$ and $j\in\{1,\ldots, N_{d,n}\}$, the period of $\tilde\bfw_{k,j}$ is~$p_{k,j}$;
	
	(ii) the maps $\rho_{k_1,\tilde\bfw_{0,1}}, \ldots, \rho_{k_{N_{d,n}},\tilde\bfw_{0,N_{d,n}}}, \rho_{1,\tilde\bfw_{1,1}},\ldots, \rho_{1,\tilde\bfw_{k,N_{d,n}}}, \ldots, \rho_{n,\tilde\bfw_{n,1}},\ldots, \rho_{n,\tilde\bfw_{n,N_{d,n}}}$ are locally independent at $\tilde F_0$ in $\mathrm{End}_d(\Pb^n)$.
\end{proposition}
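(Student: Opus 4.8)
The plan is to follow the proof of Proposition~\ref{rho_independence_prop} almost verbatim, replacing Lemma~\ref{computation_lemma} and Lemma~\ref{degree_count_lemma} by Lemma~\ref{compute_Pn_lemma} and Lemma~\ref{degree_lemma2}, the genuinely new feature being the extra ``direction'' $m=0$. Concretely, fix the bijection $\tilde I(\cdot)$ and let $M$ be the $(n+1)N_{d,n}\times(n+1)N_{d,n}$ matrix whose rows are the partial derivatives $\partial_{m',\tilde I'}$ of the $(n+1)N_{d,n}$ functions in the statement, and whose columns are indexed by the pairs $(m',\tilde I')$ with $m'\in\{0,\dots,n\}$ and $\tilde I'$ admissible; the proposition amounts to making $\det M\neq 0$. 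By Lemma~\ref{compute_Pn_lemma}, a row coming from $\rho_{m,\tilde\bfw}$ with $m\ge 1$ has nonzero entries only in the column blocks $m$ and $0$, and a row coming from $\rho_{k_j,\tilde\bfw_{0,j}}$ only in the column blocks $k_j$ and $0$. Ordering both the row blocks and the column blocks as $1,\dots,n,0$, the matrix $M$ becomes block-triangular with block-diagonal part $\mathbf D=\operatorname{diag}(J_1,\dots,J_n)$ in the upper-left corner, off-diagonal blocks $\mathbf B$ (columns $0$, rows $1,\dots,n$) and $\mathbf C$ (rows $0$, columns $1,\dots,n$), and the block $\mathbf A_0$ in the lower-right corner; each $J_m$ is exactly the $N_{d,n}\times N_{d,n}$ block from Proposition~\ref{rho_independence_prop}, since under $\tilde I\leftrightarrow I$ the columns $(m,\tilde I)$ with $m\ge 1$ are precisely the polynomial directions.

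First I would choose the points $\tilde\bfw_{m,j}$, $1\le m\le n$, exactly as in Proposition~\ref{rho_independence_prop} — this uses only the $m\ge 1$ part of Lemma~\ref{degree_lemma2}, which coincides with Lemma~\ref{degree_count_lemma}, together with $p_{m,j}\ge 3$ — so that every $J_m$ is invertible and these $nN_{d,n}$ points lie in pairwise distinct orbits; this fixes $\mathbf D$, $\mathbf B$, and hence $E:=\mathbf D^{-1}\mathbf B$. By the Schur complement formula $\det M=\det\mathbf D\cdot\det(\mathbf A_0-\mathbf C E)$, so it remains to choose the $\tilde\bfw_{0,j}$ and $k_j$ making the $N_{d,n}\times N_{d,n}$ matrix $\mathbf A_0-\mathbf C E$ nonsingular. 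Its $j$-th row depends only on $(\tilde\bfw_{0,j},k_j)$, and its $(j,l)$ entry equals $w_{k_j}^{-d^{p-1}}$ times a linear combination of the polynomials $Q_{0,k_j,\tilde I(l)}$ and $\{Q_{k_j,k_j,\tilde I}\}_{\tilde I}$, in which the coefficient of $Q_{0,k_j,\tilde I(l)}$ is $1$. I would then run the same finite induction on $j$ as in Proposition~\ref{rho_independence_prop}: at step $j+1$ pick $k_{j+1}$ with $(\tilde I(j+1))_{k_{j+1}}\neq 0$ (possible since an admissible $\tilde I$ has a nonzero affine component, as otherwise $i_0=d$), so that $Q_{0,k_{j+1},\tilde I(j+1)}\not\equiv 0$ by Lemma~\ref{degree_lemma2}(i); expanding the leading $(j+1)\times(j+1)$ minor along its last row, the coefficient of $Q_{0,k_{j+1},\tilde I(j+1)}$ equals the nonzero $(j,j)$-minor, and the ``no proportional monomials'' part of Lemma~\ref{degree_lemma2} guarantees that this contribution is not cancelled by the other $Q$'s occurring, so the cofactor expansion is a nonzero polynomial in the coordinates of $\tilde\bfw_{0,j+1}$.

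The main obstacle will be the final selection step: producing, for this nonzero polynomial, a period-$p_{0,j+1}$ point of $\tilde F_0$ with all coordinates nonzero at which it does not vanish and whose orbit avoids the finitely many previously chosen orbits. When $d\ge 3$ the polynomial is a $(d^p-d^{p-1})$-polynomial and one argues exactly as in Proposition~\ref{rho_independence_prop} via Lemma~\ref{choice_lemma} and Lemma~\ref{ineq_lemma}. When $d=2$, Lemma~\ref{degree_lemma2}(ii) only bounds the degree in the single variable $z_{k_{j+1}}$ by $d^p-d^{p-2}$, which is too large for the naive count; here I would use a refined version of Lemma~\ref{choice_lemma} exploiting that the top $z_{k_{j+1}}$-degree part of $Q_{0,k_{j+1},\tilde I(j+1)}$ is, up to a nonzero constant, a single monomial of the shape $z_b^{d^{p-2}}z_{k_{j+1}}^{\,d^p-d^{p-2}}$ (with $b$ the other nonzero index of $\tilde I(j+1)$), so that the coefficient of $z_{k_{j+1}}^{\,d^p-d^{p-2}}$ in the cofactor polynomial is a nonzero polynomial in the remaining variables of much lower degree; assigning $\mathrm{Per}_p$ to a coordinate other than $z_{k_{j+1}}$ (which forces $n\ge 2$) and $\mathrm{Fix}_p$ to $z_{k_{j+1}}$, and peeling the variables in the order that keeps this top coefficient nonzero, one gets enough good points, the arithmetic succeeding precisely when $p\ge 5$ for $n=2$ and $p\ge 4$ for $n\ge 3$ — which is exactly the dichotomy in the hypothesis, and requires a strengthened counting estimate in the spirit of Lemma~\ref{ineq_lemma}. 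One extra point of care, absent from Proposition~\ref{rho_independence_prop}: because the marked coordinates $k_j$ may differ, distinctness of the orbits of $\tilde\bfw_{0,1},\dots,\tilde\bfw_{0,N_{d,n}}$ (and their disjointness from the orbits of the $\tilde\bfw_{m,j}$) is no longer automatic from nonsingularity, so it must be imposed explicitly during the selection, which is what makes the full strength of the counting lemma necessary. Finally, since the functions $\rho_{k,\tilde\bfw}$ and their partial derivatives were set up directly on $\mathrm{End}_d(\Pb^n)$ via the affine chart, the conclusion obtained is exactly the asserted local independence in $\mathrm{End}_d(\Pb^n)$, with no further passage needed.
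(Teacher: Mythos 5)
Your structural skeleton — identifying the block decomposition of the Jacobian, the roles of the blocks $\mathbf D$, $\mathbf B$, $\mathbf C$, $\mathbf A_0$, the correct observation that each $k_j$ must be chosen with $(\tilde I(j))_{k_j}\neq 0$ so that $Q_{0,k_j,\tilde I(j)}\not\equiv 0$, and, very importantly, the observation that distinctness of orbits for the block-$0$ markings is \emph{not} automatic from nonsingularity because the $k_j$'s may vary — matches the paper's reasoning. However, the \emph{order} in which you perform the selection creates a genuine counting gap that the paper's order avoids.

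You first fix all $nN_{d,n}$ of the $\tilde\bfw_{m,j}$ with $m\ge 1$ (making each $J_m$ invertible), and only afterwards choose $\tilde\bfw_{0,1},\dots,\tilde\bfw_{0,N_{d,n}}$ via the Schur complement. With this order, at the final step you may have to avoid up to $(n+1)N_{d,n}-1$ previously chosen orbits (in the worst case all of the same period), i.e., up to $\bigl((n+1)N_{d,n}-1\bigr)p$ periodic points, while the refined count of good points for $d=2$ is only $(d^{p-1}-d^{[p/2]})(d^{p-1}-1)^{n-2}(d^{p-2}-1)$. For $d=2$, $n=3$, $p=4$ — a case the proposition covers — one has $N_{2,3}=6$, so $\bigl((n+1)N_{d,n}-1\bigr)p = 23\cdot 4 = 92$, whereas the good-point count is $4\cdot 7\cdot 3 = 84 < 92$; so you cannot make the last selection. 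The paper instead orders the induction with the block-$0$ rows and columns \emph{first}: then the worst step is the last one in block $n$, where one must avoid only $nN_{d,n}$ orbits (the $N_{d,n}$ from block $0$ plus $(n-1)N_{d,n}$ from blocks $1,\dots,n-1$; collision within block $n$ is free because those rows coincide under a cyclic shift of the orbit), giving the bound $pnN_{d,n}=72<84$. To repair your argument, reverse the order of selection — choose the $\tilde\bfw_{0,j}$ and $k_j$ first (where at most $N_{d,n}-1$ orbits must be avoided, and this happens only when $k_{j'}\neq k_{j+1}$), and then the $\tilde\bfw_{m,j}$ for $m\ge 1$ — i.e., run one induction over the whole $(n+1)N_{d,n}\times(n+1)N_{d,n}$ matrix with the rows and columns ordered as in the statement rather than splitting via the Schur complement with the $0$-block last.

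A minor point: the ``refined version of Lemma~\ref{choice_lemma}'' that uses the single-monomial structure of the top $z_{k_{j+1}}$-degree part of $Q_{0,k_{j+1},\tilde I(j+1)}$ does not actually improve the count. The naive modification — replacing the degree bound $d^p-d^{p-1}$ by $d^p-d^{p-2}$ in the single variable $z_{k_{j+1}}$, assigning $\mathrm{Per}_p$ to a coordinate different from $z_{k_{j+1}}$ and $\mathrm{Fix}_p$ to $z_{k_{j+1}}$ — already yields exactly $(d^{p-1}-d^{[p/2]})(d^{p-1}-1)^{n-2}(d^{p-2}-1)$, which is what the paper's modified inequality is matched against.
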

\begin{proof}
	We will select each $k_j$ so that $\tilde I(j)$ has a nonzero entry in the $k_j$-th position. According to Lemma~\ref{compute_Pn_lemma}, this guarantees that the polynomials $\partial_{0,\tilde I(j)}\rho_{k_j,\tilde\bfw_{0,j}}(F_0)$ are not identically zero for all $j=1,\ldots,N_{d,n}$.
	
	The Jacobian matrix of the maps 
	$$
	\rho_{k_1,\tilde\bfw_{0,1}}, \ldots, \rho_{k_{N_{d,n}},\tilde\bfw_{0,N_{d,n}}}, \rho_{1,\tilde\bfw_{1,1}},\ldots, \rho_{1,\tilde\bfw_{k,N_{d,n}}}, \ldots, \rho_{n,\tilde\bfw_{n,1}},\ldots, \rho_{n,\tilde\bfw_{n,N_{d,n}}}
	$$
	is no longer block diagonal, which is why the proposition is formulated for the whole Jacobian matrix rather than for each block as in the polynomial case covered by Proposition~\ref{rho_independence_prop}. (The sub-matrix, obtained by removing the first $N_{d,n}$ rows and columns will be block diagonal and will coincide with the Jacobian matrix from the polynomial case.) Nevertheless, exactly the same proof as the one for Proposition~\ref{rho_independence_prop} works here as well with the exception that due to part \textit{(ii)} of Lemma~\ref{degree_lemma2}, for $d=2$, instead of the inequality from Lemma~\ref{ineq_lemma}, we have to use a weaker inequality
	$$
	pnN_{d,n} < (d^{p-1} - d^{[p/2]})(d^{p-1}-1)^{n-2}(d^{p-2}-1).
	$$
	One can check that the latter inequality holds for $d=2$, $n=2$, $p\ge 5$ and for $d=2$, $n\ge 3$, $p\ge 4$. (See also Remark~\ref{ineq_remark}.) We leave the remaining details to the reader.	
\end{proof}

\begin{proof}[Proof of Theorem~\ref{main_theorem_Pn}]
	For $n=1$, Theorem~\ref{main_theorem_Pn} is a special case of a more general result~\cite{gorbovickis-rat}. 	
	To complete the proof of Theorem~\ref{main_theorem_Pn} for $n\ge 2$, we observe that the analogue of Proposition~\ref{lambda_loc_indep_prop} holds in the projective case as well. To prove it, one can repeat the proof of Proposition~\ref{lambda_loc_indep_prop}, replacing Proposition~\ref{rho_independence_prop} by the corresponding Proposition~\ref{rho_independence_prop2}, and using~(\ref{reduction_eq}) before applying Lemma~\ref{coincide_lemma}. Then Theorem~\ref{main_theorem_Pn} follows directly from Theorem~\ref{main_theorem_3} exactly as discussed in Section~\ref{sec_strategy}.	
\end{proof}


\begin{proof}[Proof of Corollary~\ref{cor-Doyle_Silverman}]
	Due to Theorem~\ref{main_theorem_Pn}, it is enough to check that for $p\ge 4$, the number of $p$-cycles of a generic map from $\mathrm{End}_d(\bbP^n)$ is at least as large as the dimension of the moduli space $\mathrm{End}_d(\bbP^n)/\mathrm{PGL}_{n+1}(\bbC)$. The latter one is equal to $(n+1)N_{d,n}$. On the other hand, similarly to the proof of Lemma~\ref{choice_lemma}, one can estimate the number of $p$-cycles from below by $(d^p-d^{[p/2]})(d^p-1)^{n-1}/p$. Thus, in order to prove the corollary, it is enough to check that for all $p\ge 4$, $n\ge 1$ and $d\ge 2$, the inequality
	\begin{equation}\label{per4_ineq}
	p(n+1)N_{d,n} \le (d^p-d^{[p/2]})(d^p-1)^{n-1}
	\end{equation}
	holds.
	
	For $n=1$, inequality (\ref{per4_ineq}) transforms into
	$$
	2p(d-1) \le d^p-d^{[p/2]}
	$$
	which is easy to verify for $p\ge 4$ and $d\ge 2$.
	
	For $n\ge 2$ we have
	$$
	p(n+1)N_{d,n} < p(n+1)(d+1)^n.
	$$
	Then one can check that 
	$$
	p(d+1)\le d^p-d^{[p/2]}
	$$
	and 
	$$
	(n+1)(d+1)^{n-1}\le (3d+3)^{n-1}\le (d^p-1)^{n-1},
	$$
	for $p\ge 4$ and $d\ge 2$. Combining the last three inequalities yields~(\ref{per4_ineq}), and hence, completes the proof of Corollary~\ref{cor-Doyle_Silverman}.	
\end{proof}

\section{Consequences about the bifurcation measure and the critical height}\label{sec_coro}
The main results of Gauthier-Taflin-Vigny in \cite{gauthier2023sparsity} were established for endomorphisms of $\Pb^n$ in all dimensions $n\geq2$, but only in dimension $n=2$ for polynomial endomorphisms of $\Cb^n.$ The missing ingredient in higher dimensions in that case was the independence of multipliers. Our initial goal in the article was to address this gap and extend the results of \cite{gauthier2023sparsity} to $\Cb^n$ for all $n\geq3.$ Using Theorem \ref{main_theorem_1}, we can deduce Corollary \ref{cor-mu-interior} and Corollary \ref{cor-unif} from Section~3 and 4 and from Theorem~7.2 in \cite{gauthier2023sparsity} respectively. We now explain this in more detail, starting by recalling classical results.

\subsection{Dynamics of regular endomorphisms and Lyapunov exponents}
The goal here is to define the bifurcation measure on $\tilde{\mathcal P}_d^n$ and to state the Bedford-Jonsson formula on Lyapunov exponents obtained in \cite{bedford-jonsson}, mainly to be able to use \cite[Theorem~7.2]{gauthier2023sparsity}. For more details and precise references, we refer the reader to these articles.
 
Let $d\geq2,$ $n\geq2$ and let $f$ be a regular polynomial endomorphism of $\Cb^n.$ The \emph{Green function} of $f$ is defined by
$$G_f(z):=\lim_{k\to\infty}d^{-k}\log^+\|f^k(z)\|.$$
This is a non-negative continuous plurisubharmonic function on $\Cb^n.$ From this, the \emph{Green current} of $f$ is then given by $T_f:=\ddc G_f$ and its \emph{equilibrium measure} is $\mu_f:=T_f^n.$ We denote by $L(f)$ the sum of all the Lyapunov exponents of $\mu_f$ and by $J_n(f)$ its \emph{small Julia set}, i.e. $J_n(f):=\supp(\mu_f).$

As $f$ is regular, it extends as an endomorphism of $\Pb^n,$ for which the hyperplane $H_\infty$ at infinity is totally invariant. The restriction $f_{|H_\infty}$ also has an equilibrium measure and we denote by $L_\infty(f)$ the sum of its Lyapunov exponents. Bedford and Jonsson established a formula connecting $L(f),$ $L_\infty(f)$ and the integration of the Green function with respect to the critical measure of $f.$ More precisely, if $\crit_f$ denotes (the closure in $\Pb^n$ of) the critical set of $f$ in $\Cb^n$ then we  set
$$\mu_{f,\crit}:=T_f^{n-1}\wedge[\crit_f]\ \text{ and }\ G_{f,\crit}:=\int_{\Cb^n}G_f\mu_{f,\crit}.$$
Then, \cite[Theorem~3.2]{bedford-jonsson} stated that
\begin{equation}\label{eq-bj}
L(f)=\log d+L_\infty(f)+G_{f,\crit}.
\end{equation}
An important fact for us is that the three functions
$$
L\colon f\mapsto L(f),\text{ } L_\infty\colon f\mapsto L_\infty(f)\ \text{ and } G_{\crit}\colon f\mapsto G_{f,\crit}
$$
are plurisubharmonic \cite{ds-allupoly} and continuous \cite{BB1} on $\mathcal{P}_d^n$. Moreover, $L$ is invariant under holomorphic conjugacy thus if $[f]$ denotes the class of $f$ in $\tilde{\mathcal P}_d^n$ then $\tilde L\colon[f]\mapsto L(f)$ is well-defined on $\tilde{\mathcal P}_d^n.$ If for simplicity we denote by $N:=nN_{d,n}$ the dimension of $\tilde{\mathcal P}_d^n,$ then complex Monge-Amp\`ere $\mu_\bif:=(\ddc \tilde L)^N$ of $\tilde L$ is the \emph{bifurcation measure} introduced by Bassanelli-Berteloot in \cite{BB1}.

A technical difficulty with the moduli space $\tilde{\mathcal P}_d^n$ is that its elements are classes and several dynamical objects, such as the critical set, cannot be naturally associated to such classes. To overcome this, we consider the space $\hat{\mathcal P}_d^n\subset\mathcal P_d^n$ defined below \eqref{eq-monic} which has the advantage that the projection $\pi\colon\hat{\mathcal P}_d^n\to\tilde{\mathcal P}_d^n$ is a finite ramified cover. This holomorphic family of mappings gives a global endomorphism
\begin{align*}
F\colon\Pb^n\times\hat{\mathcal P}_d^n&\to\Pb^n\times\hat{\mathcal P}_d^n\\
(z,f)&\mapsto(f(z),f).
\end{align*} 
Since each $f\in\hat{\mathcal P}_d^n$ is a regular polynomial endomorphism of $\Cb^n,$ the critical set of $F$ decomposes as $H_\infty\times\hat{\mathcal P}_d^n\cup\crit_F$ where the fiber of $\crit_F$ above $f$ is exactly $\crit_f.$

The bifurcation measure in $\hat{\mathcal P}_d^n$ is simply $\hat\mu_\bif:=(\ddc \hat L)^N,$ where $\hat L=L_{|\hat{\mathcal P}_d^n},$ which satisfies $\hat\mu_\bif=\pi^*\mu_\bif$. We also defined $\hat\mu_\bif^{\mathrm{pol}}:=(\ddc \hat G_{\crit})^{N},$ where $\hat G_{\crit}:=(G_{\crit})_{|\hat\mu_\bif}$. The equation \eqref{eq-bj} and the fact that $L_\infty$ is plurisubharmonic ensure that
\begin{equation}\label{eq-measures}
\hat\mu_\bif\geq\hat\mu_\bif^{\mathrm{pol}}.
\end{equation}
We will apply \cite[Theorem~7.2]{gauthier2023sparsity} when $S=\hat{\mathcal P}_d^n$, $f=F$ and $\mathcal Y=\crit_F.$ The main assumption then corresponds to 
\begin{equation}\label{eq-assum}
\int_{\hat{\mathcal P}_d^n} \hat G_{\crit}\hat\mu_\bif^{\mathrm{pol}}>0.
\end{equation}
As we will see, on the open set $\hat\Omega$ where we will check this positivity, the inequality \eqref{eq-measures} is actually an equality.

\subsection{The support of the bifurcation measure has non-empty interior}
Here, we prove Corollary \ref{cor-mu-interior}. Roughly speaking, the approach of Gauthier-Taflin-Vigny to obtain a non-empty open subset in the support of the bifurcation measure in $\tilde{\mathcal P}_d^n$ is the following. 
\begin{enumerate}
\item\label{it-con} Construct a robust heterodimensional cycle between a repelling hyperbolic set $\Lambda$ and a saddle fixed point $p$. More precisely, there exists a connected non-empty open subset $\Omega$ of $\mathcal P_d^n$ such that each $f\in\Omega$ has a repelling hyperbolic set $\Lambda(f)$ and a saddle fixed point $p(f)$ with  $W^s_{p(f)}\cap W^u_{\Lambda(f)}\neq\varnothing$ and $W^u_{p(f)}\cap\Lambda(f)\neq\varnothing$. The first condition is easily achieved robustly as $W^u_{\Lambda(f)}$ is an open subset of $\Cb^n$,  while the latter requires more care and \cite{gauthier2023sparsity} used a mechanism called blender to obtain it. Moreover, several other technical assumptions on $\Omega\subset\mathcal P_d^n$ are required (see  \cite[Section~3]{gauthier2023sparsity}).
\item\label{it-conj} If the projection $\tilde\Omega$ of $\Omega$ in $\tilde{\mathcal P}_d^n$ does not intersect the interior of the support of the bifurcation measure then $\tilde\Omega$ contains in a dense way submanifolds $M$ of positive dimension with a special property denoted by $(\star)$ which implies that the eigenvalue functions associated to periodic points on the small Julia set are constant on $M.$
\item\label{it-mul} In particular, infinitely many multipliers are constant on such submanifolds, contradicting the independence of multipliers.
\end{enumerate}
The point \eqref{it-con} is given by \cite[Theorem~4.1]{gauthier2023sparsity} which is also available in $\mathcal P_d^n$ with $n\geq3$. The point \eqref{it-conj} corresponds to the proof of Theorem~C in \cite{gauthier2023sparsity} using Theorem~3.4 there. The only difference in our setting is that we need information on the multipliers (i.e. eigenvalues of $D_xf^p$) while  \cite[Theorem~3.4]{gauthier2023sparsity} only gives information on their product (i.e. the determinant of $D_xf^p$). However, the proof of Theorem~3.4 actually establishes that if $M$ satisfies $(\star)$ then all $f,f'\in M$ are holomorphically conjugated in neighborhoods of their small Julia sets $J_n(f)$ and $J_n(f').$ Hence, the associated eigenvalue functions are constant on $M.$ Finally, the contradiction in the point \eqref{it-mul} follows from Theorem \ref{main_theorem_1}, which ensures that submanifolds $M$ satisfying $(\star)$ cannot be dense in $\tilde\Omega.$
\subsection{Uniform control on the critical preperiodic points}
To prove Corollary \ref{cor-unif}, it suffices to verify that the assumption \eqref{eq-assum} of \cite[Theorem~7.2]{gauthier2023sparsity} holds, and then use a density argument to extend the result, which is originally stated over $\overline{\Qb}$, to maps $f$ defined over $\Cb$.

The first step is immediate after the observation that the open set $\Omega\subset\mathcal P_d^n$ used above is a small neighbordhood of a map
$$f_0\colon (z,w,y_3,\ldots,y_n)\mapsto (\alpha z+\epsilon w+\beta zw+w\sum_{i=3}^n\tau_iy_i, a(w^2-1),\sigma_3y_3,\ldots,\sigma_ny_n)+c(z^d,w^d,y_3^d,\ldots,y_n^d),$$
where $\alpha,\beta,\epsilon,\tau_i,\sigma_i,a,c$ are constants with $0<|c|<1$ small and $|a|$ can be chosen arbitrarily large with respect to $|\beta|$ and $|\tau_i|$ (see \cite[Section~4.5]{gauthier2023sparsity}). The dynamics at infinity of such a map is conjugated to the power map if $d\geq3$. And a simple computation gives that it is arbitrarily close to it when $d=2$ for $|\beta/(a+c)|$ and $|\tau/(a+c)|$ small. Therefore, by reducing $\Omega$ if necessary, we can assume that the restriction of $f\in\Omega$ to the hyperplane at infinity $H_\infty$ is hyperbolic. In particular, $f\mapsto L_\infty(f)$ is pluriharmonic on $\Omega$ and thus, $\ddc L=\ddc G_{\crit}$ on the image $\tilde\Omega$ of $\Omega$ in $\tilde{\mathcal P}_d^n.$ In particular, if $\hat\Omega:=\pi^{-1}(\tilde\Omega)$ then $\hat\mu_\bif=\hat\mu_\bif^{\mathrm{pol}}$ on $\hat\Omega$ in $\hat{\mathcal P}_d^n.$ Since we have $\tilde\Omega\subset\supp(\mu_\bif),$ $\hat\mu_\bif=\pi^*\mu_\bif,$ $\hat G_{\crit}\geq0$ and $\hat\mu_\bif^{\mathrm{pol}}=(\ddc \hat G_\crit)^N$, we must have $\hat\Omega\subset\supp(\hat\mu_\bif^{\mathrm{pol}})$ and
$$\int_{\hat{\mathcal P}_d^n}\hat G_{\crit}\hat\mu_\bif^{\mathrm{pol}}\geq\int_{\hat\Omega} \hat G_{\crit}\hat\mu_\bif^{\mathrm{pol}}>0$$
which corresponds exactly to \eqref{eq-assum}. Hence, \cite[Theorem~7.2]{gauthier2023sparsity} gives a non-empty Zariski open subset $\hat U$ of $\hat{\mathcal P}_d^n$ with a uniform control, for $f\in\hat U(\overline\Qb),$ on the points of small canonical height lying on the critical set of $f.$ As preperiodic points have zero height, the result applies to them. More precisely, there exists $B\geq1$ such that if $f\in\hat U(\overline\Qb)$ then there is an algebraic subset $W_f$ of $\Cb^n$ such that $\deg(W_f)\leq B$ and
$$\mathrm{Preper}(f)\cap\crit_f\subset W_f.$$
If we could extend the result to $\hat U(\Cb)$ then the set $U$ in Corollary \ref{cor-unif} would simply be the preimage of $\pi(\hat U)$ by the projection $\mathcal P_d^n\to\tilde{\mathcal P}_d^n,$ i.e. $U$ is the set of maps $f$ whose conjugacy class intersects $\hat U.$

Now, it remains to extend the result from $\hat U(\overline\Qb)$ to $\hat U(\Cb)$, which simply follows from the compactness of the set of codimension $2$ algebraic subsets of $\Cb^n$ of degree bounded by $B.$ More precisely, assume that some $f\in \hat U(\Cb)$ does not satisfy the statement, i.e. the set $\mathrm{Preper}(f)\cap\crit_f$ is not contained in any codimension $2$ algebraic subvariety of $\Cb^n$ of degree bounded by $B.$ Hence, by compactness there exist $p>q\geq0$ such that the same holds for $\mathrm{Preper}_{p,q}(f)\cap\crit_f$ where $\mathrm{Preper}_{p,q}(f):=\{z\in\Pb^n\,;\, f^p(z)=f^q(z)\}.$ On the other hand, both $\crit_F$ and $\mathrm{Preper}_{p,q}:=\{(z,g)\in\Pb^n\times\hat{\mathcal P}_d^n\,;\, g^p(z)=g^q(z)\}$ are defined over $\overline\Qb.$ A difficulty here is that their intersection $Z:=\crit_F\cap\mathrm{Preper}_{p,q}$ might have arbitrarily many irreducible components with respect to $p$ and $q$ and it could be that it is not of pure dimension. However, if $\rho\colon\Pb^n\times\hat{\mathcal P}_d^n\to\hat{\mathcal P}_d^n$ is the natural projection then we can consider all the irreducible components $(Z_i)_{i\in I}$ of $Z$ such that $f\in\rho(Z_i)$ for each $i\in I.$ The set $Y:=\cap_{i\in I}\rho(Z_i)$ is also defined over $\overline\Qb$ so there exists a sequence $(f_n)$ in $(Y\cap \hat U)(\overline\Qb)$ converging to $f.$ The Remmert open mapping theorem (see e.g. \cite[Chapter V.6]{loja}) applied to each $\rho_{|Z_i}\colon Z_i\to\rho(Z_i)$ ensures that each point in $\mathrm{Preper}_{p,q}(f)\cap\crit_f$ is a limit of points in $\mathrm{Preper}_{p,q}(f_n)\cap\crit_{f_n}.$ Thus, $\mathrm{Preper}_{p,q}(f)\cap\crit_f$ is contained in any limit value of $(W_{f_n})$ which gives the desired contradiction.

\bibliographystyle{amsalpha}
\bibliography{biblio}
\end{document}